\newif\iffascist\fascistfalse
\def\coordinate{coordinate}
\def\coefficient{coefficient}
\def\coordinate{co\"ordinate}
\def\coefficient{co\"efficient}
\newcommand\Z{\mathbb{Z}}
\newcommand\N{\mathbb{N}}
\newcommand\R{\mathbb{R}}
\newcommand\Grig{{G_{012}}}
\newcommand\stab{{\operatorname{stab}}}
\newcommand\supp{{\operatorname{supp}}}
\newcommand\sym{{\mathfrak S}}
\newcommand\expect{{\mathbf E}}
\newcommand\proba{{\mathbf P}}
\newcommand\pair[1]{{\langle\!\langle}#1{\rangle\!\rangle}}
\newtheorem{lemma}{Lemma}[section]
\newtheorem{proposition}[lemma]{Proposition}
\newtheorem*{theorem*}{Theorem}
\newtheorem{theorem}[lemma]{Theorem}
\newtheorem{corollary}[lemma]{Corollary}
\newtheorem{maintheorem}{Theorem}
\theoremstyle{definition}
\newtheorem{example}[lemma]{Example}
\newtheorem{remark}[lemma]{Remark}
\newtheorem{definition}[lemma]{Definition}
\newtheorem{question}[lemma]{Question}
\font\manfnt=manfnt\newcommand\bend[1]{{$\odot$\marginpar{\manfnt\char127}}{#1}$\odot$}
\begin{document}
\title{Poisson-Furstenberg boundary and growth of groups}

\author{Laurent Bartholdi}
\email{laurent.bartholdi@gmail.com}
\address{\'Ecole Normale Sup\'erieure, Paris \emph{and} Georg-August Universit\"at zu G\"ottingen}

\author{Anna Erschler}
\email{anna.erschler@ens.fr}
\address{\'Ecole Normale Sup\'erieure, Paris}

\date{12 January 2015} 

\thanks{The work is supported by the ERC starting grant GA 257110
  ``RaWG'', the Courant Research Centre ``Higher Order Structures''
  of the University of G\"ottingen, and the ANR grant ANR-14-ACHN-0018-01}

\begin{abstract}
  We study the Poisson-Furstenberg boundary of random walks on
  permutational wreath products. We give a sufficient condition for a
  group to admit a symmetric measure of finite first moment with
  non-trivial boundary, and show that this criterion is useful to
  establish exponential word growth of groups. We construct groups of
  exponential growth such that all finitely supported (not necessarily
  symmetric, possibly degenerate) random walks on these groups have
  trivial boundary. This gives a negative answer to a question of
  Kaimanovich and Vershik.
\end{abstract}
\maketitle

\section{Introduction}
Consider a set $X$ with a basepoint $\rho$ and a right action of a
group $G$. A \emph{random walk} on $X$ is defined by a probability
measure $\mu$ on $G$; the random walker starts at $\rho$ and, at each
step, moves from $x$ to $x g$ with probability $\mu(g)$.  An important
particular case is $X=G$, seen as a $G$-space under
right-multiplication. A measure $\mu$ is \emph{symmetric} if
$\mu(g)=\mu(g^{-1})$ for all $g\in G$, and is \emph{non-degenerate} if
its support generates the group $G$.

The Poisson-Furstenberg boundary is the space of ergodic components of
the time shift in the space of infinite trajectories of the walk.  In
the case of random walks on groups, there are several equivalent
definitions of the Poisson-Furstenberg boundary, and we recall some of
them in Section~\ref{sec:def}. For more information, see e.g.\
\cites{kaimanovich-v:entropy}.

There is a strong relation between triviality/non-triviality of the
boundary and other asymptotic properties of groups
(see~\cites{kaimanovich-v:entropy}, and~\cites{erschler:hyderabad} for
a more recent overview).  For example, a result of Kaimanovich-Vershik
and Rosenblatt~\cites{kaimanovich-v:entropy,rosenblatt:mixing} states
that a group is amenable if and only if it admits a non-degenerate
measure with trivial boundary.

A natural question to ask is: ``Can exponential word growth be
characterized by non-triviality of the boundary for appropriate
measures?''  Indeed, there are several manifestations of the analogy
between non-triviality of the boundary and exponential growth, such as
the ``Entropy Criterion'' of Derriennic~\cite{derriennic:thergodique}
and Kaimanovich-Vershik~\cite{kaimanovich-v:entropy} and the
``Choquet-Deny'' theorem for nilpotent groups, proven by Dynkin and
Maliutov~\cite{dynkin-maljutov:choquetdeny}. There are partial results
towards the characterisation questions explained below, such as
\cite{erschler:liouville}*{Theorem 4.1} stating that a solvable group
$G$ admits a symmetric measure with non-trivial Poisson boundary if
and only if $G$ has exponential growth.

The ``Entropy Criterion'' implies in particular that if a group admits a
finitely supported measure with non-trivial boundary, then its word
growth is exponential.

Kaimanovich and Vershik conjecture
in~\cite{kaimanovich-v:entropy}*{page~466} that every group of
exponential growth admits a symmetric (possibly infinitely supported)
measure with non-trivial boundary, and add: ``It is plausible that
such a measure can be chosen finitary (but non-symmetric)''.

Their conjecture remains open; but we show in
Section~\ref{sec:trivboundary} that their addendum does not hold:
\begin{maintheorem}\label{thm:mainA}
  There exists a finitely generated group $G$ of exponential word
  growth, such that the boundary of $(G,\mu)$ is trivial for all
  finitely supported (possibly degenerate and non-symmetric) measures
  $\mu$.
\end{maintheorem}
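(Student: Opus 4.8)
The plan is to exhibit an explicit group and then play its deterministic geometry against the probabilistic behaviour of its walks: the worst‑case geometry will force exponential growth, while the typical behaviour of every walk will force zero entropy. Concretely, I would take $G$ to be a permutational wreath product $G=A\wr_X H=A^{(X)}\rtimes H$, with $A$ a nontrivial finite ``lamp'' group, $H$ a suitable self‑similar group of subexponential (intermediate) word growth acting on a set $X$, and $\rho\in X$ a basepoint whose orbit $X=\rho H$ carries the Schreier graph on which all the action takes place (natural candidates for $H$ live in the family $\Grig,\Erschler$). The point of this structure is that the lamp‑setting generators only alter the lamp over $\rho$, so a configuration $f\in A^{(X)}$ can be produced only after the cursor $\rho h$ has visited every point of $\supp(f)$; thus both the growth of $G$ and the entropy of walks on $G$ are governed by how the single orbit $\rho H$ is explored.

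For the growth lower bound I would show that the worst‑case inverted orbit growth of $H$ on $X$ is at least linear: there are words $h$ of length $n$ whose cursor trajectory $\rho,\ \rho h_1,\ \rho h_1h_2,\dots$ visits at least $cn$ distinct points of $X$. Setting the lamps independently along such a trajectory produces at least $|A|^{cn}$ distinct elements in the ball of radius $Cn$, so $G$ has exponential growth. This is exactly where one needs the Schreier graph of $(H,X,\rho)$ to be geometrically ``long'' (e.g.\ to contain embedded segments of linear length), and it should follow from the self‑similar combinatorics of the chosen $H$.

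For triviality of the boundary I would invoke the Entropy Criterion of Derriennic and Kaimanovich--Vershik recalled above: it suffices to prove that the asymptotic entropy $h(\mu)=\lim_n H(\mu^{*n})/n$ vanishes for every finitely supported $\mu$. Writing the position after $n$ steps as $(f_n,h_n)\in A^{(X)}\rtimes H$ and using subadditivity, $H(\mu^{*n})\le H(f_n)+H(h_n)$. The base term satisfies $H(h_n)\le\log|B_H(Cn)|=o(n)$ because $H$ has subexponential growth, so its contribution already vanishes in the limit. The lamp term is controlled by the range of the cursor: since $\supp(f_n)$ is contained in the set $R_n$ of points of $X$ visited in the first $n$ steps, the standard lamplighter entropy estimate gives $h(\mu)\le \log|A|\cdot\lim_n \expect[|R_n|]/n$. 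Everything therefore reduces to showing that the expected range grows sublinearly, $\expect[|R_n|]=o(n)$.

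The main obstacle — and the heart of the matter — is to guarantee $\expect[|R_n|]=o(n)$ for \emph{every} finitely supported measure $\mu$, including degenerate and non‑symmetric ones. By the renewal identity $\expect[|R_n|]=\sum_{k=0}^{n}\proba(\text{cursor lands on a fresh point at time }k)$, this is equivalent to recurrence of the Markov chain on the Schreier graph $\rho H$ driven by the $H$‑projection $\bar\mu$ of $\mu$: if that chain is recurrent, the return probability is $1$, the fresh‑point probability tends to $0$, and hence $\expect[|R_n|]/n\to 0$. For symmetric $\mu$ this recurrence can be extracted from a Varopoulos‑type argument once the Schreier graph has growth degree at most two. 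The genuinely delicate case is a biased $\bar\mu$, where the induced chain is neither reversible nor centred and could a priori drift off to infinity along $\rho H$; the construction of $H$ and of its action must therefore be arranged so that the Schreier graph admits \emph{no} direction of escape, so that its self‑similar geometry frustrates any linear drift and keeps every finitely supported chain recurrent. Establishing this robust, bias‑proof recurrence — presumably through a careful analysis of the ends of the Schreier graph and of the torsion/self‑similar structure of $H$, rather than any soft general principle — is where I expect the real work to lie.
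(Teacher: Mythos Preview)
Your framework is right, but two specific choices are off. First, with $H\in\{\Grig,\Erschler\}$ and $X$ a single orbital Schreier graph, the wreath product $A\wr_X H$ has \emph{intermediate} growth, not exponential: this is the main result of~\cite{bartholdi-erschler:permutational}, recalled explicitly in the paper. That Schreier graph is a ray, and the worst-case inverted orbit on it is sublinear, so your exponential-growth step fails outright for the very candidates you name. The paper instead takes $X=X_1\times X_2$, a product of two such rays, with $G\times G$ acting co\"ordinatewise. Two-dimensionality is exactly the borderline needed: the return probability on $X$ decays like $C/n$, which is just fast enough that a carefully built infinitely supported measure (a stable-law convex combination $\sum_i\alpha_i\mu^{*i}$) is transient on $X$ and hence has nontrivial boundary by Proposition~\ref{prop:transientimpliesnottriv}, forcing exponential growth (Theorem~\ref{thm:mainB}); yet $X$ sits inside $\Z^2$ and so is recurrent for reversible walks. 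Second, your mechanism for bias-proof recurrence (``self-similar geometry frustrates drift'') is a hope, not an argument, and on a ray it is simply false: a biased walk on $\Z_+$ is transient. The paper's device is to take the acting group and $A$ to be \emph{torsion}. Every element of $\supp(\bar\mu)$ then has finite order, so its action on $X$ decomposes into finite cycles, and the induced chain on $X$ is a \emph{centered Markov chain} in the sense of Kalpazidou--Mathieu. Mathieu's theorem (Lemma~\ref{lem:mathieu}) reduces its recurrence to that of its symmetrization, which is a reversible walk on a subgraph of $\Z^2$, hence recurrent. This disposes of every finitely supported $\mu$---symmetric or not, degenerate or not---in one stroke.

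A smaller gap: the ``standard lamplighter entropy estimate'' $h(\mu)\le\log|A|\cdot\lim_n\expect|R_n|/n$ you invoke is standard only for ordinary wreath products, where the cursor range is a connected subset of the Cayley graph of $H$ and so has few possible shapes. For permutational wreath products the support of $f_n$ lies in the inverted orbit on $X$, which need not be connected, and bounding the entropy of its location is a genuine extra step (Lemma~\ref{lem:subexp+}; see Remark~\ref{rem:inv-direct} for why the na\"\i ve bound fails). It goes through precisely because $H$ has subexponential growth, but it is not free.
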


There are many examples of groups of exponential growth such that
every symmetric finitely supported measure has trivial boundary, yet
some finitely supported measure has non-trivial boundary; e.g., wreath
products of a non-trivial finite group with $\Z$ or $\Z^2$,
see~\cite{kaimanovich-v:entropy}*{\S6.4}, and solvable
Baumslag-Solitar groups,
see~\cite{kaimanovich-v:entropy}*{\S6.6}. Likewise, on some groups
(e.g.\ wreath products of a finite group with the infinite dihedral
group, see Gilch~\cite{gilch:acceleration}), every non-degenerate
measure has trivial boundary, but these groups admit degenerate
measures with non-trivial boundary.

The groups we construct to prove Theorem~\ref{thm:mainA} are
\emph{permutational wreath products}, as defined in the next section:
groups $W=A\wr_XG=\sum_XA\rtimes G$. More precisely, we consider
a family of groups $W=A\wr_{X_1\times\cdots\times
  X_d}(G_1\times\cdots\times G_d)$, in which each $G_i$ is a copy of
the first Grigorchuk group, each $X_i$ is an orbital Schreier
graph of the first Grigorchuk group, and $A$ is a finite group. See
Sections~\ref{sec:def} and~\ref{sec:ntboundary} for details of this
construction. See also Section~\ref{sec:grig} for a larger family of
examples.

Ordinary wreath products ($X=G$) have exponential growth as soon as
$A$ is non-trivial and $G$ is infinite, but the situation is more
subtle for permutational wreath products, which may have intermediate
growth.  Indeed, it is shown
in~\cite{bartholdi-erschler:permutational} that the $W$ above has
intermediate growth if $d=1$. In this paper we consider the groups $W$
with $d\ge2$, and are in particular interested in the case $d=2$.  For
$d\ge 3$ one can show that a simple random walk on such groups has
non-trivial Poisson-Furstenberg boundary, see
Example~\ref{ex:d=3}. For the proof of Theorem~\ref{thm:mainA} we
consider the case $d=2$, which lies in some sense on a borderline
between exponential and intermediate growth: on one hand, as we
explain below, the growth is exponential. On the other hand, these
groups are ``close to groups of subexponential growth'', in the sense
that any finitely supported measure on them has trivial boundary.

In many known examples of groups, their exponential growth can be
checked either by exhibiting a free semigroup (they exist in solvable
groups of exponential growth, and more generally in elementarily
amenable groups of exponential growth, see
Chou~\cite{chou:elementary}); or by proving the existence of an
imbedded regular tree in the group's Cayley graph (as is the case for
any non-amenable group~\cite{benjamini-schramm:trees}). Ordinary
(non-permutational) wreath products of a non-trivial group by an
infinite group also contain regular trees in their Cayley graph, and
this class of groups contains interesting examples of torsion groups
of exponential growth, see Grigorchuk~\cite{grigorchuk:superamenable}.

Our understanding is that, for the groups we consider in this paper,
it is not straightforward to check that their growth is exponential.
To show that our examples have exponential growth we prove (in
Section~\ref{sec:ntboundary}) the following criterion based on random
walks:

\begin{maintheorem}\label{thm:mainB}
  Let $G$ be a finitely generated group acting on a set $X$, and let
  $\mu$ be a finitely supported, symmetric, non-degenerate measure on
  $G$. Suppose that the drift function of $\mu$ is at most
  $D n^\alpha$ for some constants $\alpha<1$ and $D$, and that, for
  every $\rho\in X$, the probability of return to $\rho$ of the
  $n$-step random walk induced on $X$ is at most $C n^{-\delta}$, for
  some constants $\delta>\alpha$ and $C$. Let $A$ be a non-trivial
  group.

  Then $W:=A\wr_X G$ admits a symmetric measure with finite first
  moment and non-trivial Poisson-Furstenberg boundary.  In particular,
  the word growth of $W$ is exponential.
\end{maintheorem}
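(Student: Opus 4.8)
The overall plan is to exhibit a single measure $\nu$ on $W$ that is symmetric, has finite first moment, and lights infinitely many ``lamps'' at distinct sites of $X$ along a \emph{transient} trajectory. Non-triviality of the boundary will then follow from a stabilization argument: if the induced walk on $X$ is transient, each site $x\in X$ is visited only finitely often, so the lamp sitting over $x$ eventually freezes, and the resulting limit configuration $f_\infty\in\sum_X A$ is a non-constant boundary-measurable random variable. The final assertion (exponential growth) then follows formally: by the Entropy Criterion of Derriennic and Kaimanovich--Vershik a finite-entropy measure has non-trivial boundary iff its asymptotic entropy $h(\nu)$ is positive, and the fundamental inequality $h(\nu)\le \ell(\nu)\,v(W)$ forces the exponential growth rate $v(W)$ to be positive, since finite first moment bounds the speed $\ell(\nu)<\infty$.

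The measure $\nu$ will be a ``switch--walk--switch'' measure whose $G$-part is a time-changed version of $\mu$. Concretely, fix a symmetric non-degenerate measure $\eta$ on $A$ and weights $c_t\asymp t^{-1-\gamma}$ for a parameter $\gamma\in(0,1)$ to be chosen, and set $\mu'=\sum_{t\ge1}c_t\,\mu^{*t}$. Let $\nu$ move by first switching the lamp at the current site according to $\eta$, then moving in $G$ by $\mu'$, then switching again; since $\mu$ and $\eta$ are symmetric, so is $\nu$. Each $\mu'$-step performs an entire $\mu$-excursion of random, heavy-tailed length $t$, which has two effects: it lets the induced walk on $X$ escape to infinity (restoring transience even when the $\mu$-walk itself is recurrent on $X$), and it keeps the lit lamps far apart.

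Two estimates then govern the choice of $\gamma$, and the hypothesis $\delta>\alpha$ is exactly what makes them compatible. For the \emph{first moment}, the length in $W$ of a $\nu$-step is controlled by the $G$-displacement of a $\mu$-excursion of length $t$ plus a bounded lamp cost; using the drift bound $\expect[\,|g_t|\,]\le C t^\alpha$ one gets finite first moment provided $\sum_t c_t\,t^{\alpha}<\infty$, i.e.\ $\gamma>\alpha$. For \emph{transience}, write the Green function of the $\mu'$-walk on $X$ at the basepoint as
\begin{equation*}
  G'(\rho,\rho)=\sum_{T\ge0} p_T\,U(T),\qquad U(T)=\sum_{n\ge0}c^{*n}(T),
\end{equation*}
where $p_T\le C\,T^{-\delta}$ is the return probability of the $\mu$-walk on $X$ and $U$ is the local renewal mass of the jump sequence $(c_t)$. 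The strong renewal theorem for a tail exponent $\gamma\in(0,1)$ gives $U(T)\asymp T^{\gamma-1}$, whence $G'(\rho,\rho)\lesssim\sum_T T^{\gamma-1-\delta}<\infty$ precisely when $\gamma<\delta$. Choosing any $\gamma\in(\alpha,\delta)$ therefore yields a symmetric, finite-first-moment $\nu$ whose induced walk on $X$ is transient.

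I expect the renewal/Green-function step to be the main obstacle. The clean asymptotic $U(T)\asymp T^{\gamma-1}$ requires care (the strong renewal theorem is delicate for $\gamma\le\tfrac12$), so it may be preferable to choose $(c_t)$ in the domain of attraction of a $\gamma$-stable law and to replace the pointwise estimate by a summation-by-parts argument using only the robust counting asymptotic $\sum_{T\le N}U(T)\asymp N^\gamma$. A secondary point to handle carefully is passing from transience at $\rho$ (finiteness of $G'(\rho,\rho)$) to almost sure stabilization of the lamp over \emph{every} site: this is the standard deduction that a transient irreducible chain visits each state finitely often, giving $G'(\rho,x)<\infty$ for all $x$, so that $f_\infty$ is well defined and, because $\eta$ is non-degenerate and infinitely many fresh sites are lit, genuinely non-constant.
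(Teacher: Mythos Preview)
Your proposal is correct and follows the same overall strategy as the paper: build a switch--translate--switch measure $\nu=\eta*\mu'*\eta$ with $\mu'=\sum_{t\ge1}c_t\,\mu^{*t}$ and $c_t\asymp t^{-1-\gamma}$, check that $\gamma>\alpha$ gives finite first moment while $\gamma<\delta$ gives transience on $X$, then deduce non-triviality of the boundary from lamp stabilization (this is the paper's Proposition~\ref{prop:additional}) and exponential growth from Guivarc'h's inequality.

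The one place where you diverge from the paper is the proof of transience. The paper bounds each $\lambda_\gamma^{*n}(\stab_G(\rho))=\sum_i\nu_\gamma^{*n}(i)\,\mu^{*i}(\stab_G(\rho))$ separately, invoking Gnedenko's local limit theorem (Theorem~\ref{thm:linnik2}) to get the uniform pointwise bound $\nu_\gamma^{*n}(k)\le C' n^{-1/\gamma}$ and then summing over $n$. You instead interchange the two sums and write the full Green function as $\sum_T p_T\,U(T)$, with $U(T)=\sum_n c^{*n}(T)$ the renewal mass of the time-change. This is a genuinely different organization of the same estimate, and your summation-by-parts variant using only the integrated asymptotic $\sum_{T\le N}U(T)\asymp N^\gamma$ (Dynkin--Lamperti, a consequence of the weak limit theorem for stable laws) is a clean way to sidestep the delicate strong renewal theorem for $\gamma\le\tfrac12$; the paper's route has the compensating advantage that the local limit bound is quoted directly from Ibragimov--Linnik without further work. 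Either way the threshold comes out the same.

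One cosmetic slip: the limit configuration $f_\infty$ lives in $\prod_X A$, not in $\sum_X A$, since infinitely many lamps get lit; this does not affect the argument, as you only use the coordinate functionals $f_\infty(\sigma)$.
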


In our situation, we consider $\delta=1$ and some $\alpha<1$. The
assumption of Theorem~\ref{thm:mainB} on the probability to return to
the origin on $X$ is a consequence of the fact that $X$ is a product
of two copies of infinite transitive Schreier graphs. For our main
examples used for the proof of Theorem~\ref{thm:mainA}, the condition
on the drift in Theorem~\ref{thm:mainB} follows from an upper bound on
the growth of Grigorchuk groups. To get more examples of this kind, we
consider groups for which the condition on the drift on $G$ is ensured
by a version of a self-similar-random-walk argument due to
Bartholdi-Virag and Kaimanovich; see
Example~\ref{ex:grigorchukgeneral}. We also give a torsion-free group
with this property, see Example~\ref{ex:grigorchuktf}.

On the other hand, to prove that the random walks we consider have
trivial Poisson-Furstenberg boundary, we use a criterion due to
Kalpazidou and Mathieu ensuring recurrence of ``centered'' random
walks, and a criterion for triviality of the boundary of random walks
on permutational wreath products
(Proposition~\ref{prop:recurrenceimpliestriv}). This criterion is
more complicated than in the case of ordinary wreath
products, see the discussion at the beginning of
Section~\ref{sec:trivboundary}.

The groups we construct in this paper admit a (symmetric,
non-degenerate) finite first moment measure with non-trivial boundary.
This leads us to ask the following question:
\begin{question}
  Does there exist a group $G$ of exponential word growth, such that
  all (not necessarily symmetric, not necessarily non-degenerate)
  measures with finite first moment have trivial Poisson-Furstenberg
  boundary?
\end{question}

\section{Definitions and preliminaries}\label{sec:def}
\subsection{Poisson-Furstenberg boundary, entropy and drift}
Consider two infinite trajectories $\boldsymbol x$ and $\boldsymbol
y$. We say they are \emph{equivalent} if they coincide after some
instant, possibly up to the time shift: there exists $N\in\N,k\in\Z$
such that $\boldsymbol x_n=\boldsymbol y_{n+k}$ for all $n\ge
N$. Consider the measurable hull of this equivalence relation in the
space of infinite trajectories. The quotient by this equivalence
relation is called the \emph{Poisson-Furstenberg boundary}.

A function $F\colon G \to \R$ is called $\mu$-\emph{harmonic} if for all
$g\in G$ we have $F(g) = \sum_{h\in G} F(g h) \mu(h)$.  The
Poisson-Furstenberg boundary is non-trivial if and only if $G$ admits
a bounded $\mu$-harmonic function which is non-constant on the group
generated by the support of $\mu$.

The \emph{entropy} of a probability measure $\mu$ is computed as
$H(\mu)=-\sum_g \mu(g) \log(\mu(g))$.  The \emph{entropy of the random
  walk}, also called its \emph{asymptotic entropy}, is the limit
$h(\mu)$ of $H(\mu^{*n})/n$, as $n$ tends to infinity. This limit is
well-defined, since the function $H(n):=H(\mu^{*n})$ is subadditive.
If $H(\mu)=\infty$, then $H(\mu^{*n})=\infty$ for all $n$ and in this
case we put $h(\mu)=\infty$.

Fix a finite generating set $S$ and consider on $G$ the word metric
$\|\cdot\|_S$ associated with $S$. Given a probability measure $\mu$
on $G$ and $\beta >0$, the \emph{$\beta$-moment} of $\mu$ with respect
to $S$ is $L^\beta(\mu):=\sum_{g\in G} \mu(g) \|g\|_S^\beta$. Clearly,
if the $\beta$-moment is finite with respect to some finite generating
set $S$, then it is finite with respect to any other generating
set. The \emph{first moment} of $\mu$ is simply written $L(\mu)$.

 The function $L(n):=L(\mu^{*n})$ is also
subadditive, by the triangular inequality for $\|\cdot\|_S$. It
expresses the mean distance to the origin in the word metric
$\|\cdot\|_S$, after $n$ steps of the random walk.
The \emph{drift}, also called \emph{rate of escape}, of the random
walk $(G,\mu)$ is the limit $\ell(\mu)$ of $L(n)/n$ as $n$ tends to
infinity; this limit is well-defined because $L(n)$ is subadditive. If
the first moment of $\mu$ is finite, that is, if $L(1)<\infty$, then
$\ell(\mu) \le L(1) <\infty$.

The \emph{entropy criterion} (Derriennic,
Kaimanovich-Vershik~\cites{derriennic:thergodique,kaimanovich-v:entropy})
states that if $\mu$ is a measure of finite entropy, then the boundary
of $(G,\mu)$ is trivial if and only if the entropy of the random walk
$h(\mu)$ is zero. If $\ell(\mu)=0$ then $h(\mu)=0$. For symmetric
measures, the converse is true: a symmetric measure $\mu$ of finite
first moment has zero drift ($\ell(\mu)=0$) if and only if the entropy of
the random walk $(G,\mu)$ is zero ($h(\mu)=0$), see
Karlsson-Ledrappier~\cite{karlsson-ledrappier:drift}.

We say that a measure $\mu$ is \emph{non-degenerate} if its support
generates $G$.  If the boundary of $\mu$ is trivial, then the group
generated by the support of $\mu$ is amenable. Every amenable group
admits a non-degenerate measure with trivial boundary
(Kaimanovich-Vershik,
Rosenblatt~\cites{kaimanovich-v:entropy,rosenblatt:mixing}); this
measure can be chosen symmetric and with support equal to $G$.

\subsection{Random walks on permutational wreath products}
We consider groups $A$, $G$, such that $G$ acts on a set $X$ from the
right. The \emph{(permutational) wreath product} $W=A\wr_X G$ is the
semidirect product of $\sum_X A$ with $G$. The \emph{support}
$\supp(f)$ of a function $f\colon X\to A$ consists in those $x\in X$ such
that $f(x)\neq1$.  The restricted product $\sum_X A$ is the group of finitely
supported functions $X\to A$.  The left action of $G$ on $\sum_X A$ is
then defined by $(g\cdot f)(x) = f(x g)$; observe that for $g_1$,
$g_2$ in $G$
\[
(g_1g_2\cdot f)(y) = f(y(g_1g_2)) = f((y g_1)g_2) = (g_2\cdot f)(y g_1) = (g_1\cdot g_2\cdot f)(y).
\]
We have in particular $\supp(g^{-1}\cdot f)=\supp(f)g$.

If $A$ and $G$ are finitely generated and if the action of $G$ on $X$
is transitive, then the permutational wreath product is a finitely
generated group. Indeed, fix finite generating sets $S_A$ and $S_G$ of
$A$ and $G$ respectively, and fix a basepoint $\rho\in X$. The wreath
product is generated by $S=S_A \cup S_G$.  Here and in the sequel we
identify $G$ with its image in $W$ under the imbedding $g \to (1,g)$ and
identify $A$ with its image in $W$ under the imbedding $a \to (f_a,1)$, where
$f_a\colon X \to A$ is defined by $f_a(\rho)=a$ and $f(x)=1$ for all $x\ne
\rho$. We call $S$ the \emph{standard generating set} of $W$ defined
by $S_A$, $S_G$.  Analogously, if the action of $G$ on $X$ has
finitely many orbits, then $W$ is finitely generated by $S_G\cup
(S_A\times X/G)$. If the action has infinitely many orbits, then the
permutational wreath product is not finitely generated.

The Cayley graph of the permutational wreath product with respect to
the standard generating set $S$ can be described as follows. Elements
of $W=\sum_X A\rtimes G$ are written $f g$ with $f\in\sum_X A$ and $g\in
G$; multiplication is given by $(f_1g_1)(f_2g_2)=f_1(g_1\cdot
f_2)g_1g_2$.

Consider a word $v=s_1s_2 \dots s_\ell$, with all $s_i \in S$, and
write its value in $W$ as $f_v g_v$. Set $u=f_u g_u= s_1 s_2 \dots
s_{\ell-1}$. Here $g_u, g_v$ belong to $G$, and $f_u, f_v$ belong to
$\sum_X A$.

We consider two cases, depending on whether $s_\ell\in S_A$ or
$S_\ell\in S_G$. If $s_\ell\in S_A$, we have an edge of ``A'' type
from $u$ to $v$. The multiplication formula gives $g_v=g_u$ and
$f_v(x)=f_u(x)$ for all $x\neq \rho g_u^{-1}$, while
$f_v(\rho g_u^{-1})=f_u(\rho g_u^{-1})s_\ell$.

If $s_\ell\in S_G$, we have an edge of ``G'' type from $u$ to
$v$. In that case, $f_v=f_u$, and $g_v=g_us_\ell$.

We have begun to study asymptotic properties of permutational wreath
products in~\cite{bartholdi-erschler:permutational}.  The asymptotic
geometry of these groups turns out to be much richer than in the
particular case of ordinary wreath products (namely, for which
$X=G$). It is easy to see that the word growth of $A \wr G$ is
exponential whenever $X=G$ is infinite and $A$ is non-trivial.
However, among permutational wreath products there are groups of
intermediate growth, see~\cite{bartholdi-erschler:permutational}.

Given a probability measure $\mu$ on $W=A\wr_X G$, we say that the
random walk is \emph{translate-or-switch} if the support of $\mu$
belongs to the union of $G$ and $A$; in other words, $\mu=p\mu_A +q
\mu_G$, where $p, q\ge 0$, $p+q=1$, the support of $\mu_A$ belongs to
$A$, and the support of $\mu_G$ belongs to $G$.

We say that the random walk is \emph{switch-translate-switch} if
$\mu=\mu_A * \mu_G *\mu_A$, for measures $\mu_A,\mu_G$ supported on
$A$ and $G$ respectively.  If $X=G$, the ``switch-translate-switch''
random walks are called ``switch-walk-switch''. In this case, we can
view each step of the random walk as follows: we ``switch'' the value
of the configuration at the point where the random walker stands, then
make one step of the random walk on $G$ and then switch the
configuration at the point of the arrival. Note however that no such
interpretation is valid for a general permutational wreath product,
because translation and movement are in general genuinely different
operations.

Let $w=g_1\dots g_n$ be a word over $G$ of length $n=|w|$, and let
$\rho\in X$ be a base point. The \emph{inverted orbit} of $w$ is the
set $\{\rho,\rho g_n,\rho g_{n-1}g_n,\dots,\rho g_1\cdots g_n\}$; the
\emph{inverted orbit growth} is the cardinality
\[\delta_\rho(w)=\#\{\rho,\rho g_n,\rho g_{n-1}g_n,\dots,\rho g_1\cdots g_n\}\]
of that set. In the sequel, $\rho$ will be fixed, and we will usually omit
it from the notation.

If $w$ is a word corresponding to a length-$n$ trajectory of a random walk, then we can consider
$\delta_\rho(w)$ as a random variable with values in $\{1,\dots,n+1\}$.

\section{Criteria for non-triviality of the  boundary}
We characterize in this section groups with non-trivial
Poisson-Furstenberg boundary, with the goal of applying it to
permutational wreath products. For ordinary wreath products, a
well-known criterion by Kaimanovich and
Vershik~\cite{kaimanovich-v:entropy}*{Proposition~6.4} states that,
for $A\neq1$ finite and finitely supported measures, $A\wr G$ has
trivial boundary if and only if the projection of the random walk to
$G$ is recurrent. We extend this criterion to permutational wreath
products.

\begin{lemma}\label{lem:expectedinvertedorbit}
  Let the group $G$ act on $X$ and let $\mu$ be a probability measure
  on $G$. Let $\rho\in X$ be a basepoint. Then the induced random walk
  on $X$ starting at $\rho$ is recurrent if and only if the expectancy of the inverted
  orbit growth $\expect[\delta_\rho(w)]$ is sublinear in $|w|$.
\end{lemma}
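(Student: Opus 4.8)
The plan is to express the expected inverted orbit growth \emph{exactly} as a sum of non-return probabilities of the induced walk, and then read off the equivalence from the asymptotics of those probabilities.

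Write $w=g_1\cdots g_n$ for a length-$n$ trajectory, the $g_i$ being i.i.d.\ with law $\mu$, and let $Z_k=\rho g_1\cdots g_k$ denote the induced walk on $X$. Reindexing the inverted orbit by suffix length, I would set $Q_m=\rho\,g_{m+1}\cdots g_n$ for $m=0,\dots,n$, so that $\{Q_0,\dots,Q_n\}$ is exactly the inverted orbit and $\delta_\rho(w)=|\{Q_0,\dots,Q_n\}|$. The key observation is that, since every $g\in G$ acts on $X$ as a bijection, a common suffix may be cancelled: for $m<m'$ one has $Q_m=Q_{m'}$ if and only if $\rho\,g_{m+1}\cdots g_{m'}=\rho$. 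Thus two inverted-orbit points coincide precisely when the intermediate block of increments returns the walk to $\rho$.

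Next I would count each distinct point once, at its shortest representing suffix: $Q_m$ contributes iff $Q_m\neq Q_{m'}$ for all $m'>m$, which yields
\[
\delta_\rho(w)=\sum_{m=0}^{n}\mathbf 1\bigl[\rho\,g_{m+1}\cdots g_{m'}\neq\rho\text{ for all }m'\in\{m+1,\dots,n\}\bigr].
\]
Taking expectations and using that $(g_{m+1},\dots,g_n)$ has the same law as $(g_1,\dots,g_{n-m})$, each term equals the probability $u_{n-m}$ that the walk started at $\rho$ avoids $\rho$ during its first $n-m$ steps, where $u_j:=\proba[Z_i\neq\rho\text{ for }1\le i\le j]$. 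Hence $\expect[\delta_\rho(w)]=\sum_{j=0}^{n}u_j$.

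Finally, $u_j$ is non-increasing in $j$ and converges to the escape probability $q:=\proba[Z_i\neq\rho\text{ for all }i\ge1]$, the walk being recurrent at $\rho$ exactly when $q=0$. If $q=0$ then $u_j\to0$, so the Cesàro averages $\tfrac1{n}\sum_{j\le n}u_j$ tend to $0$ and $\expect[\delta_\rho(w)]=o(n)$ is sublinear; if $q>0$ then $u_j\ge q$ for every $j$, whence $\expect[\delta_\rho(w)]\ge (n+1)q$ grows linearly. This gives the claimed equivalence. The main obstacle is purely bookkeeping: pinning down the cancellation identity for $Q_m=Q_{m'}$ (where the bijectivity of the $G$-action is essential) and verifying that the shortest-suffix rule counts every distinct inverted-orbit point exactly once; once this is in place, the probabilistic dichotomy is immediate.
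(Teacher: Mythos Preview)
Your proof is correct and follows the same strategy as the paper: write $\delta_\rho(w)$ as a sum of indicator variables, cancel the common suffix using bijectivity of the $G$-action, and identify each expectation as a non-return probability; then invoke the recurrence/transience dichotomy via Ces\`aro averages.

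There is one genuine difference worth noting. The paper marks each distinct inverted-orbit point at its \emph{first} index $i$ in the listing $\rho g_i\cdots g_n$; after cancelling and inverting, the resulting non-return probabilities are those of the walk driven by $\check\mu(g)=\mu(g^{-1})$, and an extra argument is then needed (using $\stab_G(\rho)=\stab_G(\rho)^{-1}$) to pass from recurrence of the $\check\mu$-walk to recurrence of the $\mu$-walk. You instead mark each point at its \emph{shortest} suffix (largest $m$), and the cancellation $Q_m=Q_{m'}\Leftrightarrow \rho\,g_{m+1}\cdots g_{m'}=\rho$ lands you directly on the $\mu$-walk non-return probabilities $u_{n-m}$, with no reflection step required. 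Your route is therefore marginally cleaner; the paper's has the mild advantage of making explicit the equivalence between recurrence of $\mu$ and $\check\mu$ on $X$, which is of some independent interest.
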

\begin{proof}
  Consider the random variable $A_{i,n}$ which equals $1$ if the
  $i$-th point on the inverted orbit of the trajectory of the random
  walk is distinct from any points from $1$ to $i-1$, and equals $0$
  otherwise. Consider a word $w=g_1\dots g_n$.  Since $G$ acts by
  permutations on $X$, we have
  \begin{align*}
    w\in A_{i,n} &\Leftrightarrow \rho g_i \cdots g_n \not\in\{\rho g_{i-1} \cdots g_n, \rho g_{i-2} \cdots g_n,\dots,\rho g_1 \cdots g_n\}\\
    &\Leftrightarrow \rho \not\in\{\rho g_{i-1}, \rho g_{i-2} g_{i-1},
    \dots, \rho g_1 \cdots g_{i-1}\}\\
    &\Leftrightarrow \rho \not\in\{\rho g_{i-1}^{-1},\rho g_{i-1}^{-1} g_{i-2}^{-1}, \dots,\rho g_{i-1}^{-1} g_{i-2}^{-1} \cdots g_1^{-1}\}.
  \end{align*}
  Therefore,
  \[\expect[A_{i,n}] = \proba[\rho\neq\rho g_{i-1}^{-1}\text{ and }\rho\ne\rho
  g_{i-1}^{-1} g_{i-2}^{-1}\text{ and }\dots\text{ and }\rho \ne \rho
  g_{i-1}^{-1} g_{i-2}^{-1} \cdots g_1^{-1}];
  \]
  observe that this is the probability $p_i$ that the random walk on
  $X$ induced by $(G,\check\mu)$ with
  $\check\mu(g)=\mu(g^{-1})$, starting from $\rho$, never returns to
  this base point $\rho$ during time moments between $1$ and
  $i-1$. Note that for each $i$ we have $p_i\ge p_{i+1}$.  If the
  random walk on $X$ induced by $(G,\check\mu)$ is recurrent, then
  $p_i \to 0$, while if the random walk on $X$ induced
  by$(G,\check\mu)$ is transient, then there exists $p>0$ such
  that $p_i\ge p$ for all $i$.

  Next, observe that the expectation of $\delta_\rho(w)$ is
  \begin{align}\label{eq:expdelta}
    \expect[\delta_\rho(w)]&=\expect[1+A_{1,n}+A_{2,n}+ \dots +A_{n,n}]\\
    &= 1+\expect[A_{1,n}]+\expect[A_{2,n}]+ \dots +\expect[A_{n,n}].\notag
  \end{align}
  Therefore, the expectation of $\delta_\rho(w)$ grows linearly (at least $p n$)
  if the random walk is transient, while $\delta_\rho(w)/|w|$ tends to zero if
  the random walk is recurrent.

  Finally, observe that $(G,\check\mu)$ induces a transient random walk
  on $X$ if and only if the stabilizer $G_\rho$ of $\rho \in X$ is a
  transient set for the random walk $(G,\check\mu)$; because
  $G_\rho=G_\rho^{-1}$, this happens if and only if $(G,\mu)$ induces
  a transient random walk on $X$.
\end{proof}

We continue now with two propositions, each giving a sufficient
condition for non-triviality of the boundary. One uses finiteness of
the entropy of $\mu$, and the other a weak restriction on the support
of $\mu$. The proof of Lemma~\ref{lem:lineartimpliesnottriv}
appears further below.
\begin{lemma}\label{lem:lineartimpliesnottriv}
  Let $\mu$ be a non-degenerate measure on $W$ with finite entropy
  $H(\mu)$, and suppose that the expected size of the inverted orbits
  of the random walk defined by the projection of $\mu$ to $G$ grows
  linearly.  Then the entropy $h(\mu)$ of the random walk $(W,\mu)$
  is positive; so the Poisson-Furstenberg boundary of $(W,\mu)$ is
  non-trivial.
\end{lemma}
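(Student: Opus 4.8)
The plan is to prove that linear growth of the inverted orbit forces positive asymptotic entropy $h(\mu)$, and then invoke the entropy criterion (valid since $H(\mu)<\infty$) to conclude non-triviality of the boundary. The central idea is that the lamp configuration $f_{w_n}\in\sum_X A$ retains a nontrivial amount of information about the trajectory: the value of $f_{w_n}$ at a point $x$ in the inverted orbit is determined by the increments of the $A$-part that the walk deposited over $x$, and distinct inverted-orbit points carry (at least partially) independent such data. So I would try to lower-bound the entropy of the random walk on $W$ by the entropy contributed by the final configuration alone.

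Concretely, first I would set up notation: write the position after $n$ steps as $w_n=f_n g_n$, and recall from the multiplication formula in the Cayley-graph description that $g_n$ is the projected walk on $G$ and $f_n$ is built up by the switches applied at the successively visited points $\rho g_n^{-1}, \rho(g_{n-1}g_n)^{-1},\dots$ — precisely the inverted orbit $\{\rho, \rho g_n, \rho g_{n-1}g_n,\dots\}$ computed in Lemma~\ref{lem:expectedinvertedorbit}. The key observation is that $H(\mu^{*n})\ge H(f_n)$, the entropy of the marginal of the configuration, because $f_n$ is a function of $w_n$. Next I would argue that, conditionally on the $G$-trajectory, the value of $f_n$ at each \emph{fresh} inverted-orbit point (one visited for the first time, in the sense of the indicators $A_{i,n}$ of the previous lemma) gets an independent contribution from the $A$-valued switching part of $\mu$. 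Since $A\neq1$ and $\mu$ is non-degenerate, each such contribution carries at least a fixed positive amount $c>0$ of entropy. Summing over the $\expect[\delta(w)]\ge pn$ distinct points given by the linear-growth hypothesis yields $H(f_n)\gtrsim c\,p\,n$, hence $h(\mu)=\lim H(\mu^{*n})/n>0$.

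The main obstacle I anticipate is the conditional-independence step: one must be careful that the switches deposited at distinct inverted-orbit points are genuinely decoupled enough to add their entropies, and that non-degeneracy of $\mu$ (rather than a clean translate-or-switch or switch-translate-switch structure) still guarantees a uniform positive entropy per fresh visit. The cleanest way around this is to condition on the entire $G$-trajectory $g_1,\dots,g_n$: given this data, the sequence of inverted-orbit points is deterministic, and the configuration $f_n$ is the image of the tuple of $A$-increments under a map that is \emph{injective on the coordinates indexed by first-visit times}, so that $H(f_n\mid g_1,\dots,g_n)\ge\sum_{i:\,A_{i,n}=1}H(\text{increment at step }i)$. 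Extracting a uniform lower bound $c$ on these per-step entropies, using that the support of $\mu$ generates $W$ and $A$ is nontrivial, and then taking expectation over the trajectory to recover the linear bound $pn$ from Lemma~\ref{lem:expectedinvertedorbit}, is the delicate part; once it is in place, $h(\mu)\ge cp>0$ and the entropy criterion finishes the proof.
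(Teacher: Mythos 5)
Your high-level strategy (harvest entropy from the lamp configuration over distinct inverted-orbit points, then conclude via the entropy criterion, which applies since $H(\mu)<\infty$) is the same as the paper's, but the step you yourself single out as delicate is a genuine gap, and it cannot be closed in the form you propose: conditioning on the full $G$-trajectory can destroy \emph{all} of the entropy. Concretely, take the ordinary wreath product $W=(\Z/2\Z)\wr\Z^3$ (so $X=G=\Z^3$), let $a$ be the lamp at the basepoint and $t_1,t_2,t_3$ the standard generators of $\Z^3$, and let $\mu$ be uniform on the seven elements $\{a,\,at_1,\,t_1^{-1},\,at_2,\,t_2^{-1},\,at_3,\,t_3^{-1}\}$. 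This measure is non-degenerate and finitely supported, and its projection to $\Z^3$ is a lazy simple random walk, which is transient; so by Lemma~\ref{lem:expectedinvertedorbit} the expected inverted orbit growth is linear and all hypotheses of the lemma hold. Yet each element of $\supp(\mu)$ is uniquely determined by its projection to $G$, so given the $G$-increments the entire trajectory in $W$ is deterministic: $H(f_n\mid g_1,\dots,g_n)=0$ for every $n$. Your key inequality $H(f_n\mid g_1,\dots,g_n)\ge\sum_{i:A_{i,n}=1}H(\text{increment at step }i)$ is therefore false if the per-step entropies are meant unconditionally (each equals $\log 7$ here) and vacuous if they are meant conditionally on the $G$-part (they vanish); in either reading there is no uniform constant $c>0$ per fresh visit, and no argument that only uses entropy conditional on the full $G$-path can give a positive lower bound for this $\mu$, even though its asymptotic entropy is in fact positive.

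The paper's proof gets around exactly this obstruction, with two ingredients absent from your outline. First, non-degeneracy is used to replace $\mu$ by a convolution power $\mu^{*N}$ (harmless, since $h(\mu^{*N})=N\,h(\mu)$) whose support contains two distinct elements of $A$, i.e.\ two distinct configurations supported at $\rho$ with trivial projection to $G$; this manufactures ``$A$-steps'' whose values are genuinely random, with normalized law $\nu$ on $A$ satisfying $H(\nu)>0$ --- randomness which, having trivial $G$-part, cannot be read off the $G$-trajectory. (In the example above, $1$ and $a$ both lie in $\supp(\mu^{*2})$, and this is where the positive entropy actually comes from.) Second, instead of conditioning on the $G$-trajectory, the paper conditions on the word $\overline w$ obtained by \emph{deleting} the $A$-steps; the conditional randomness then consists of the random positions and the i.i.d.\ $\nu$-distributed values of the deleted steps, each of which alters the lamp at a single inverted-orbit point of $\overline w$. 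Finally --- a point your proposal also skips --- several $A$-steps may land on the same coordinate, so one needs the combinatorial Lemma~\ref{lem:stirling} to see that in expectation a positive proportion (at least $rn$) of the $\ge dn$ distinct inverted-orbit points are actually modified, each contributing at least $H(\nu)$ of conditional entropy, whence $h(\mu)\ge(1-2\epsilon)\,r\,H(\nu)>0$. With the reduction to a power containing two $A$-elements and with this collision count, your outline becomes the paper's argument; without them it fails on the example above.
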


\noindent Combining Lemma~\ref{lem:expectedinvertedorbit} and
Lemma~\ref{lem:lineartimpliesnottriv}, we get the following
\begin{proposition}\label{prop:transientimpliesnottriv}
  Let $\mu$ be a non-degenerate random walk on $W$ with finite entropy
  and transient projection to $X$. Then the Poisson-Furstenberg
  boundary of the random walk $(W,\mu)$ is non-trivial.
\end{proposition}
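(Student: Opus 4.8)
The plan is to deduce Proposition~\ref{prop:transientimpliesnottriv} by assembling the two lemmas that immediately precede it, using Lemma~\ref{lem:expectedinvertedorbit} to convert the transience hypothesis into the linear-inverted-orbit-growth hypothesis required by Lemma~\ref{lem:lineartimpliesnottriv}, and then invoking the entropy criterion to pass from positive asymptotic entropy to non-triviality of the boundary.

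First I would unwind the hypotheses. We are given a non-degenerate measure $\mu$ on $W$ with finite entropy and with transient projection to $X$. Let $\bar\mu$ denote the projection of $\mu$ to $G$; the induced random walk on $X$ is by hypothesis transient. By Lemma~\ref{lem:expectedinvertedorbit}, applied with this $\bar\mu$ in place of the measure $\mu$ of that lemma, recurrence of the induced walk on $X$ is equivalent to sublinearity of $\expect[\delta_\rho(w)]$; taking contrapositives, transience is equivalent to the \emph{failure} of sublinearity. Inspecting the proof of that lemma, transience in fact yields the sharper conclusion that $\expect[\delta_\rho(w)] \ge p\,|w|$ for some fixed $p>0$, i.e.\ genuine linear growth of the expected inverted orbit, which is exactly the input Lemma~\ref{lem:lineartimpliesnottriv} asks for.

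Next I would feed this into Lemma~\ref{lem:lineartimpliesnottriv}. That lemma requires $\mu$ non-degenerate, of finite entropy $H(\mu)$, and with linearly growing inverted orbits for the projection to $G$. The first two are among our hypotheses, and the third we have just extracted from transience. The lemma then gives $h(\mu)>0$. By the entropy criterion of Derriennic and Kaimanovich-Vershik recalled in Section~\ref{sec:def}, a measure of finite entropy has trivial boundary if and only if $h(\mu)=0$; since here $h(\mu)>0$, the Poisson-Furstenberg boundary of $(W,\mu)$ is non-trivial.

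This proof is essentially a bookkeeping step, so there is no serious obstacle; the only point demanding care is the alignment of measures and the direction of the equivalence. Lemma~\ref{lem:expectedinvertedorbit} is stated for a measure $\mu$ on $G$ and involves passing to $\check\mu$ in its proof, so I must make sure I apply it to the $G$-projection $\bar\mu$ and not to the ambient measure on $W$, and that the transience of the $X$-walk I invoke is the same notion (invariance under $g\mapsto g^{-1}$, guaranteed by $G_\rho=G_\rho^{-1}$, is handled inside that lemma). I should also confirm that ``linear growth of inverted orbits'' in Lemma~\ref{lem:lineartimpliesnottriv} is met by the lower bound $\expect[\delta_\rho(w)]\ge p|w|$ rather than requiring an exact linear rate. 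With these identifications in place the combination is immediate.
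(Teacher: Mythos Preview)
Your proposal is correct and is exactly the paper's approach: the paper simply states that the proposition follows by combining Lemma~\ref{lem:expectedinvertedorbit} and Lemma~\ref{lem:lineartimpliesnottriv}, and you have spelled out precisely that combination, including the care about applying Lemma~\ref{lem:expectedinvertedorbit} to the projection $\bar\mu$ and extracting the linear lower bound $\expect[\delta_\rho(w)]\ge p|w|$ needed as input to Lemma~\ref{lem:lineartimpliesnottriv}.
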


\begin{proof}[Proof of Lemma~\ref{lem:lineartimpliesnottriv}]
  The argument is similar to  that of
  \cite{erschler:liouville}*{Theorem 3.1}.

  Let $\rho\in X$ be any basepoint. First, there exists $N\in\N$ and
  $f\neq f'\in\sum_X A$ both supported on $\{\rho\}$ such that
  $f,f'\in\supp(\mu^{*N})$, because $\mu$ is non-degenerate.  Recall
  that we identify $A$ with those functions $f\in\sum_X A$ that are
  supported on $\{\rho\}$.  Since $h(\mu^{*N})=N h(\mu)$, it suffices
  to prove $h(\mu^{*N})>0$; up to replacing $\mu$ by $\mu^{*N}$, we
  suppose, from now on, that there are at least two elements in
  $A\cap\supp(\mu)$; in particular, $\mu(A)>0$. If at time instant $n$
  the increment of the random walk belongs to $A$, we say that at this
  instant there is an `$A$' \emph{step} of the random walk.  Define
  the normalized measure $\nu\colon A\to\R$ by $\nu(a)=\mu(a)/\mu(A)$;
  by assumption, $H(\nu)>0$.

  Let $w=w_1\dots w_n\in W^n$ be a trajectory, and let $W$ act on $X$ via
  the quotient map $W\to G$. Consider, in analogy with the proof of
  Lemma~\ref{lem:expectedinvertedorbit}, the event $Z_{i,n}$ that the
  inverted random walk of $G$ on $X$ visits a new point at time $i$, namely
  \[Z_{i,n}\Leftrightarrow\rho w_i\cdots w_n\not\in\{\rho w_j\cdots w_n\mid j>i\}.\]
  As in~\eqref{eq:expdelta} we have
  \[\expect[\delta_\rho(w)]=\proba[Z_{1,n}]+\cdots+\proba[Z_{n,n}]+1.\]
  For $i\ge2$, let $Z'_{i,n}=Z_{i,n}\wedge(w_{i-1}\in A)$ denote the event
  that $Z_{i,n}$ holds and that an `$A$' step is performed at the moment
  the new point is visited. Since $Z_{i,n}$ and $(w_{i-1}\in A)$ are
  independent events, $\proba[Z'_{i,n}]=\mu(A)\proba[Z_{i,n}]$. Therefore,
  the expected number $t$ of distinct points $x_1,\dots,x_t\in X$ that
  belong to the inverted orbit of $w$ and at which an `$A$' step occurs is
  at least
  \[\mu(A)\proba[Z_{2,n}]+\cdots+\mu(A)\proba[Z_{n,n}]+\mu(A)\ge\mu(A)\expect[\delta_\rho(w)]-1.\]

  Since the expectation of $\delta_\rho(w)$ is linear in $n$, the
  expectation of $t$ is also linear in $n$, say $\expect[t]\ge d n$
  for all $n$ large enough and a constant $d>0$.

  Now given a random trajectory $w=w_1w_2\dots w_n\in W^n$, with steps
  $w_i$, let $i_1,\dots,i_t\in\{1,\dots,n-1\}$ denote those times $i$
  at which $Z'_{i+1,n}$ holds. Let $\tau$ denote the partition of
  $W^n$ in which $w=w_1\dots w_n$ and $w'=w'_1\dots w'_n$ belong to
  the same part if they have the same $t$, same indices
  $i_1,\dots,i_t$ and if $w_i=w'_i$ for all
  $i\not\in\{i_1,\dots,i_t\}$. In particular all
  $w_{i_1},\dots,w_{i_t}$ belong to $A$, and if for $s=1,\dots,t$ we
  let $x_s=\rho w_{i_{s}}\cdots w_n$ denote the corresponding points
  in the inverted orbit, then they are the same for all elements of a
  part.

  We compute the conditional entropy $H(\mu^{*n}\mid\tau)$. All
  elements $w$ in a part of the partition are obtained by selecting
  independently $t$ elements $w_{i_1},\dots,w_{i_t}\in A$ according to
  the normalized measure $\nu$. All these words $w$ evaluate to
  distinct elements $\overline w$ of $W$: each $w_{i_1},\dots,w_{i_t}$
  may be recovered from $\overline w\in W$ by writing
  $\overline w=(f,g)$ with $f\colon X\to A,g\in G$ and considering
  $f(x_1),\dots,f(x_t)$. Therefore,
  \[H(\mu^{*n}\mid \tau)\ge\expect[t]H(\nu)\ge \mu(A)d n H(\nu);
  \]
  we have $H(\mu^{*n})\ge H(\mu^{*n}\mid\tau)$, because entropy is
  not less than the mean conditional entropy, see
  e.g.~\cite{rohlin:lectures}*{\S5}, so
  \[h(\mu)=\lim\frac1n H(\mu^{*n})\ge\lim\frac1n
  H(\mu^{*n}\mid\tau)\ge \mu(A) d H(\nu)>0.\qedhere\]
\end{proof}

We consider now a different sufficient condition for non-triviality of
the boundary, requiring only a very weak form of non-degeneracy of the
random walk:
\begin{proposition}\label{prop:additional}
  Let $\mu$ be ``switch-translate-switch'' random walk on $W$, and
  assume that there exist $n\in\N$ and two elements in the support of
  $\mu^{*n}$ with equal projection to $G$. Assume also that the
  projected random walk $(X,\overline\mu)$ is transient.

  Then the Poisson-Furstenberg boundary of the random walk $(W,\mu)$
  is non-trivial.
\end{proposition}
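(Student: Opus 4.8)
Throughout write the position of the walk after $n$ steps as $Z_n=F_nG_n$ with $F_n\in\sum_XA$ and $G_n\in G$, so that $G_n$ performs the projected walk and the lamp part evolves by $F_{n+1}=F_n\cdot(G_n\cdot\psi_{n+1})$, where $\psi_{n+1}\in\sum_XA$ is the base part of the $(n+1)$-st increment. For a ``switch-translate-switch'' step $aga'$ the configuration is altered only at the departure site $\rho G_n^{-1}$ and the arrival site $\rho G_{n+1}^{-1}$, so the lamp at a fixed $x\in X$ is modified only at those times $n$ with $\rho G_n^{-1}=x$. The set of such times is finite almost surely: by the computation in the proof of Lemma~\ref{lem:expectedinvertedorbit}, the number of visits of $n\mapsto\rho G_n^{-1}$ to a fixed point is governed by the return probabilities of the induced walk, and transience of $(X,\overline\mu)$ — equivalently of the walk induced by $\check{\overline\mu}$, the stabilizer being symmetric — forces finitely many returns. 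Hence the pointwise limit $F_\infty(x):=\lim_nF_n(x)$ exists almost surely for every $x$. The plan is to build from $F_\infty$ a bounded $\mu$-harmonic function that is non-constant on $\langle\supp\mu\rangle$; by the characterization recalled in Section~\ref{sec:def} this gives non-triviality of the boundary.

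First I would record the consequence of the weak degeneracy hypothesis: if $f_1g,f_2g\in\supp(\mu^{*n})$ with $f_1\ne f_2$, then $u:=(f_1g)^{-1}(f_2g)$ is a non-trivial element of $\sum_XA$ lying in $\langle\supp\mu\rangle$; fix a site $x_0\in\supp(u)$, so that $c:=u(x_0)\ne1$. I will compare the walk started at two points $v$ and $uv$ of $\langle\supp\mu\rangle$ sharing the same $G$-coordinate $g_v$ (hence the same tracer $\rho G_n^{-1}$) but whose lamps at $x_0$ differ by left multiplication by $c$. Set $b:=f_v(x_0)$, fix the \emph{constant} target $a:=b\in A$, and define $F(w):=\proba_w[F_\infty(x_0)=a]$. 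Since $F_\infty(x_0)$ is a tail function — restarting the walk at $Z_1$ leaves the eventual value unchanged — the Markov property gives $F(w)=\sum_h\mu(h)F(wh)$, so $F$ is bounded and harmonic. The structural point, which is where the ``switch-translate-switch'' form and the wreath structure enter, is that the switching process is \emph{autonomous} from the initial lamp: the tracer and the switches deposited at $x_0$ depend only on the increments and on the initial $G$-coordinate, not on the initial configuration. Writing $\lambda$ for the law of the ordered product $D$ of the switches deposited at $x_0$, a start with lamp $f$ yields $F_\infty(x_0)=f(x_0)D$, whence $F(v)=\lambda(\{1\})$ while $F(uv)=\lambda(\{\tilde c\})$ with $\tilde c:=b^{-1}c^{-1}b\ne1$.

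It remains to separate $\lambda(\{1\})$ from $\lambda(\{\tilde c\})$, and this is where I expect the real work to lie. On the event that the tracer never reaches $x_0$ no switch is ever deposited there, so $D=1$; hence $\lambda(\{1\})\ge\proba[\text{the tracer misses }x_0]$, whereas $\lambda(\{\tilde c\})\le\proba[\text{the tracer hits }x_0]$, since producing $D=\tilde c\ne1$ forces at least one visit. The plan is to invoke transience of $(X,\overline\mu)$ to choose $v$ whose coordinate $g_v$ places the tracer start $\rho g_v^{-1}$ far from the fixed site $x_0$, so that the miss-probability exceeds $\tfrac12$; then $\lambda(\{1\})>\tfrac12>\lambda(\{\tilde c\})$, giving $F(v)\ne F(uv)$ with $v,uv\in\langle\supp\mu\rangle$ and hence a non-trivial boundary. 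The main obstacle is exactly this escape estimate: one must check that as $g_v$ ranges over the (infinite, by transience) projected group $\langle\supp\mu_G\rangle$, the admissible tracer starts $\rho g_v^{-1}$ include points from which the transient walk avoids $x_0$ with probability $>\tfrac12$. For symmetric walks this is immediate, the hitting probability of the fixed point $x_0$ tending to $0$ along the infinite orbit of $\rho$; in general I would isolate it as a separate lemma deduced from transience of $(X,\overline\mu)$ together with the infiniteness of that orbit. Finally, I note that this argument nowhere uses the entropy $H(\mu)$, which is precisely what lets it cover infinitely supported (possibly infinite-entropy) measures, for which Proposition~\ref{prop:transientimpliesnottriv} is unavailable.
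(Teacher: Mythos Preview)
Your approach is correct and genuinely different from the paper's. Both proofs begin the same way: by transience of the induced walk on $X$, the lamp value at a fixed site stabilises, so $\phi_\sigma(\Theta):=F_\infty(x_0)$ is a well-defined boundary observable; and shifting the starting configuration by an element $u\in\sum_XA$ translates this observable by $u(x_0)$. The divergence is in how non-constancy is extracted. The paper works entirely from the identity: it shows $\phi_\sigma$ is not almost surely constant by a swap argument --- approximate the range of $\phi_\sigma$ by a finite set $A_\epsilon^R$ (each of the finitely many switches at $\sigma$ lies in a finite $A_\epsilon$ with high probability), find some value $b$ attained with positive probability on trajectories passing through $ug$ at time $n$, then replace those first $n$ steps by a path to $vg$, yielding $\phi_\sigma=ab\neq b$ with positive probability. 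A variant given after the proof replaces the $A_\epsilon^R$ approximation by a coset decomposition of $A$ modulo $\langle a\rangle$, reducing to a finite-valued boundary function. Neither variant moves the starting point.

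Your route instead fixes the target value and moves the starting point, reducing everything to the inequality $\lambda(\{1\})>\tfrac12>\lambda(\{\tilde c\})$. The escape lemma you flag is indeed a direct consequence of transience: writing $h(y)$ for the probability that the forward walk on $X$ from $y$ ever visits $\rho$, one has $\expect_\rho[h(X_n)]=\proba_\rho[\exists\,m\ge n:X_m=\rho]\to0$, so $\inf_y h(y)=0$ over the orbit of $\rho$; since $x_0$ lies in that orbit (it is in the support of a lamp configuration reached by the walk), some $g_v\in\langle\supp\mu_G\rangle$ places $x_0g_v$ at a point with $h<\tfrac12$. What each approach buys: the paper's swap argument is self-contained and avoids any auxiliary hitting estimate, at the cost of the finite-approximation bookkeeping when $A$ is infinite; your argument produces an explicit bounded harmonic function separating two explicit points, and is cleaner once the escape lemma is in hand.
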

Note, in particular, that the first condition holds as soon as $\mu$
is non-degenerate and $A\neq1$.  Note also that the `translation' part
of $\mu$ is allowed to be infinitely supported on $X$. Indeed, we will
later apply Proposition~\ref{prop:additional} to infinitely supported
measures.

\begin{proof}
  Consider a trajectory $\Theta=(1,f_1g_1,f_2g_2,\dots)$ of the random
  walk on $W$, with $f_i\in\sum_X A$ and $g_i\in G$, and
  $(f_i g_i)^{-1}f_{i+1}g_{i+1}\sim\mu$.

  By assumption, there exists $n\in\N$ and $u,v\in\sum_XA$, $g\in G$
  such that two elements $ug\neq vg\in W$ are reached with positive
  probability at time $n$ of the walk. Choose a \coordinate\
  $\sigma\in X$ in which $u$ and $v$ differ, say $v(\sigma)=a
  u(\sigma)$ for some $a\neq1$ in $A$.

  Consider $f_i(\sigma)$. Since the random walk we consider is
  ``switch-translate-switch'', for all $i$ the elements $f_i$ and
  $f_{i+1}$ differ in at most two places. More precisely,
  $f_i(\sigma)\neq f_{i+1}(\sigma)$ only when $\sigma\in\{\rho
  g_i^{-1},\rho g_{i+1}^{-1}\}$.

  Since $\overline\mu$ is transient, there is a bound $R\in\N$ such that,
  almost surely, we have $\sigma\in\{\rho g_i^{-1},\rho
  g_{i+1}^{-1}\}$ in at most $R$ instants $i$. It follows that
  \[\phi_\sigma(\Theta):=\lim_{i\to\infty}f_i(\sigma)\]
  almost surely exists, and defines a measurable function on the space
  of trajectories.

  For any $\epsilon>0$, at least $1-\epsilon$ of the mass of $\mu$ is
  concentrated on a finite set $W_\epsilon\subset W$; there exists a
  finite subset $A_\epsilon\subseteq A$ such that $f\in\sum_X
  A_\epsilon$ whenever $f g\in W_\epsilon$; so, with probability
  $1-R\epsilon$, the limit $\phi_\sigma$ belongs to the finite set
  $A_\epsilon^R$. Take now $\epsilon$ small enough so that
  $R\epsilon<\mu^{*n}(ug)$. Then there exists $b\in A_\epsilon^R$ such
  that with positive probability the trajectory $\Theta$ visits $ug$
  at time $n$ and satisfies $\phi_\sigma(\Theta)=b$.

  For each such trajectory, replace the initial $n$ steps (reaching
  $ug$) with an $n$-step random walk reaching $vg$. This produces a
  positive-measure set of trajectories that visit $vg$ at time $n$ and
  satisfy $\phi_\sigma(\Theta)=v(\sigma)u(\sigma)^{-1}b=ab\neq
  b$. Therefore, $\phi_\sigma$ is not constant.

  We have exhibited a non-constant measurable function on the space of
  exits of the random walk, so its boundary is not trivial.
\end{proof}

Alternatively, as a replacement for the last three paragraphs of the
proof, let the element $a=v(\sigma)u(\sigma)^{-1}\in A$ have order
$m\in\N\cup\{\infty\}$. Let $T$ be a right transversal of
$\langle a\rangle$ in $A$; that is, $A=T\sqcup aT\sqcup\cdots$. If
$m=\infty$, set $A_0=\bigsqcup_{n\in\Z}a^{2n}T$ and
$A_1=\bigsqcup_{n\in\Z}a^{2n+1}T$, while if $m<\infty$, set $A_n=a^nT$
for all $n\in\{0,\dots,m-1\}$. Then the function
$\chi:\Theta\mapsto(n\text{ if }\phi_\sigma(\Theta)\in A_n)$ is
measurable, takes finitely many values, and takes value $n$ with
positive probability if and only if it takes value $n-1\pmod m$ or
$\pmod 2$ with positive probability, so is not constant.

\section{Groups admitting finite first moment measures with non-trivial Poisson-Furstenberg boundary}\label{sec:ntboundary}
Our aim, in this section, is to prove that most wreath products of the
form $W=A\wr_{X_1\times X_2}(G_1\times G_2)$ have a non-trivial
boundary for an appropriate measure. This measure will be an infinite
convex combination of the convolutions powers of some symmetric
finitely supported measure on $W$.
 Our main task is to chose the
\coefficient s in the convex combinations in such a way that they decay
not to fast; on the other hand, they must decay fast enough that the
measure we construct has finite first moment.

To show that the measure has non-trivial boundary, we use the results
of the previous section
(Propositions~\ref{prop:transientimpliesnottriv}
and~\ref{prop:additional}).  At the end of this section, we give
applications of Theorem~\ref{thm:mainB}, and construct groups of
exponential growth that we will later use to prove
Theorem~\ref{thm:mainA}.

\begin{theorem}[= Theorem~\ref{thm:mainB}]\label{thm:expwordgrowth}
  Let $G$ be a group acting on a set $X$, and let $\mu$ be a finitely
  supported, symmetric, non-degenerate measure on $G$. Suppose that
  its drift satisfies $L_{\mu_i}(n)\le D n^\alpha$ for all $n\in\N$,
  for some constants $D$ and $\alpha<1$. Suppose also that, for every
  $\rho\in X$, the probability of return to $\rho$ satisfies
  $\mu^{*n}(\stab_G(\rho))\le C/n^\delta$ for all $n\in\N$, for some
  constants $C$ and $\delta>\alpha$. Let $A$ be a non-trivial group.

  Then $W:=A\wr_X G$ admits a symmetric measure with finite first
  moment and non-trivial Poisson-Furstenberg boundary.  In particular,
  the word growth of $W$ is exponential.
\end{theorem}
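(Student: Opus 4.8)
The plan is to build the required measure $\tilde\mu$ on $W$ by \emph{accelerating} the given walk on $G$ and then making it ``switch-translate-switch'', so that Proposition~\ref{prop:additional} can be applied to the $X$-projection. Concretely, choose a symmetric finitely supported measure $\mu_A$ on $A$ having $1$ and some $a\neq1$ in its support. On $G$, set $\tilde\mu_G=\sum_{k\ge1}c_k\,\mu^{*k}$ with weights $c_k\asymp k^{-\gamma}$, $\sum_k c_k=1$, for an exponent $\gamma$ to be pinned down, and put
\[\tilde\mu=\mu_A*\tilde\mu_G*\mu_A.\]
Since $\mu$ and each $\mu^{*k}$ are symmetric, $\tilde\mu_G$ is symmetric, and reversing the order of convolution shows $\tilde\mu$ is symmetric as well; by construction it is ``switch-translate-switch''. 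The whole argument then reduces to choosing $\gamma$ so that two competing requirements hold simultaneously: the first moment of $\tilde\mu$ must be finite (forcing $\gamma$ large), while the projection of $\tilde\mu$ to $X$ must be transient (forcing $\gamma$ small). I expect the existence of a common admissible $\gamma$ — which is exactly where the hypothesis $\delta>\alpha$ enters — to be the heart of the proof.

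For the first moment I would exploit that $\tilde\mu$ is ``switch-translate-switch'': an element of its support has the form $a_1\,g\,a_2$ with $a_1,a_2$ in the finite support of $\mu_A$ and $g$ distributed as $\tilde\mu_G$. As an element of $W$ this is a displacement $g\in G$ carrying a configuration supported on the at most two points $\rho,\rho g^{-1}$ with bounded values, so $\|a_1 g a_2\|_W\le\|g\|_G+O(1)$. Hence
\[L(\tilde\mu)\le L(\tilde\mu_G)+O(1)=\sum_{k\ge1}c_k\,L(\mu^{*k})+O(1)\le D\sum_{k\ge1}c_k\,k^\alpha+O(1),\]
using the drift bound $L(\mu^{*k})\le Dk^\alpha$. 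With $c_k\asymp k^{-\gamma}$ this is finite precisely when $\gamma>1+\alpha$. This step consumes the sublinear-drift hypothesis and is routine.

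The main obstacle is transience of the projected walk. Since the $A$-letters act trivially on $X$, the $X$-projection of $\tilde\mu$ is the walk driven by $\tilde\mu_G$, and a single $\tilde\mu_G$-step performs $k$ steps of the original walk with probability $c_k$. I would compute the Green function at $\rho$ by subordination: writing $\tilde\mu_G^{*m}=\sum_j d_{m,j}\mu^{*j}$ and summing over $m$,
\[\sum_{m\ge0}\tilde\mu_G^{*m}(\stab_G(\rho))=\sum_{j\ge0}R_j\,\mu^{*j}(\stab_G(\rho)),\qquad R_j:=\sum_{m\ge0}d_{m,j},\]
where $R_j$ is the renewal mass at $j$ of the integer-valued renewal process with step law $(c_k)$. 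For $1<\gamma<2$ the steps have infinite mean and lie in the domain of attraction of a stable law of index $\gamma-1$, so the renewal counting function satisfies $\sum_{j\le J}R_j=O(J^{\gamma-1})$, equivalently $R_j=O(j^{\gamma-2})$; I would use the upper-bound form to sidestep the delicate strong renewal theorem. Combining this with $\mu^{*j}(\stab_G(\rho))\le C\,j^{-\delta}$, the Green function is dominated by $\sum_j j^{\gamma-2-\delta}$, which converges exactly when $\gamma<1+\delta$. Thus the $X$-projection is transient whenever $\gamma<1+\delta$. Because $\delta>\alpha$ and $\alpha<1$, the window $(1+\alpha,\min\{2,1+\delta\})$ is non-empty; fixing any $\gamma$ in it makes $\tilde\mu$ simultaneously of finite first moment and transient on $X$.

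Finally I would conclude. The measure $\tilde\mu$ is ``switch-translate-switch'', and since $A\neq1$ the elements $g$ and $a\,g$ (for $g$ in the support of $\tilde\mu_G$) lie in $\supp\tilde\mu$ with equal $G$-projection but different configurations, so the hypotheses of Proposition~\ref{prop:additional} are met with $n=1$; its transience requirement is exactly what was just established. Hence the Poisson--Furstenberg boundary of $(W,\tilde\mu)$ is non-trivial. For the growth statement, finite first moment gives $\ell(\tilde\mu)<\infty$; a finite-first-moment measure on a finitely generated group (which has at most exponential growth) has finite entropy, so by the entropy criterion non-triviality of the boundary yields $h(\tilde\mu)>0$, and the fundamental inequality $h(\tilde\mu)\le\ell(\tilde\mu)\cdot\log\lambda$, with $\lambda$ the exponential growth rate of $W$, forces $\lambda>1$, i.e.\ exponential word growth.
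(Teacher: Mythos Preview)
Your proposal is correct and follows the paper's approach almost verbatim: the same measure $\tilde\mu_G=\sum_k c_k\mu^{*k}$ with $c_k\asymp k^{-\gamma}$ sandwiched by $\mu_A$, the same admissible window $\gamma\in(1+\alpha,1+\delta)\cap(1,2)$, the same first-moment estimate, and the same conclusion via Proposition~\ref{prop:additional}. The one genuine variation is in establishing transience of the $X$-projection: the paper bounds each $\lambda_\gamma^{*n}(\stab_G(\rho))$ via Gnedenko's local limit theorem for stable laws (obtaining $\nu_\gamma^{*n}(k)\le C'n^{-1/(\gamma-1)}$ uniformly in $k$) and then sums over $n$, whereas you exchange the order of summation in the Green function and invoke the infinite-mean renewal asymptotics $U(J)=\sum_{j\le J}R_j=O(J^{\gamma-1})$; both arguments rest on the same stable-law attraction of $(c_k)$ and deliver the identical condition $\gamma<1+\delta$ (one caveat: $U(J)=O(J^{\gamma-1})$ is \emph{not} equivalent to the pointwise bound $R_j=O(j^{\gamma-2})$, which is the delicate strong renewal theorem you say you wish to avoid, but Abel summation of $\sum_j R_j\,j^{-\delta}$ against the weak bound on $U$ gives the required convergence, so the gap is only expository).
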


The idea of the proof is to construct a measure $\mu$, with finite
first moment, such that the induced random walk on $X$ is transient;
and then to use Proposition~\ref{prop:transientimpliesnottriv} or
Proposition~\ref{prop:additional} to conclude that the boundary of the
random walk $(W,\mu)$ is non-trivial.

\subsection{Reminder: properties of Stable Laws}

We start by recalling classical results on stable laws
from~\cite{ibragimov-linnik:iss}, which we restrict to measures on
$\R_+$.  A measure $\mu$ on $\R_+$ is \emph{stable} if for any
$a_1,a_2>0$ there are $a>0,b$ such that $\mu(a_1\cdot x)*\mu(a_2\cdot
x)=\mu(a\cdot x+b)$; in particular, if $\mu$ is the law of independent
random variables $X_1,X_2$, then the law of $X_1+X_2$ is an affine
transformation of $\mu$.

Let $X_1,X_2,\dots$ be independent random variables with law
$\mu'$. We say $\mu'$ is in the \emph{domain of attraction} of a
non-degenerate stable law $\mu$ if there are constants $A_n,B_n$ such
that the law $\mu_n$ of $(X_1+\cdots+X_n-A_n)/B_n$ converges weakly to
$\mu$; namely, if $\mu_n(M)\to\mu(M)$ for all Borel subsets
$M\subseteq\R$ whose boundary is $\mu$-negligible.

The \emph{distribution} of a measure $\mu$ on $\R_+$ is the function
$F(x)=\mu([0,x])$. Attraction towards a stable law can be checked by
estimating the regularity of the tails of the distribution:
\begin{theorem}[Part of \cite{ibragimov-linnik:iss}*{Theorem~2.6.1}]\label{thm:linnik1}
  A measure belongs to the domain of attraction of a stable law if and
  only if its distribution $F$ satisfies
  \[F(x)=1-\frac{h(x)}{x^\alpha}\text{ as }x\to\infty,\] for a
  function $h$ that varies slowly in the sense of Karamata (meaning $h(t
  x)/h(x)\to1$ for all $t>0$) and a parameter $\alpha\in(0,2)$. This
  parameter is called the \emph{exponent} of the measure.\qed
\end{theorem}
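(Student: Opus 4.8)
The plan is to pass to characteristic functions and reduce the weak-convergence statement to an asymptotic expansion of $\hat{\mu'}$ near the origin, and then to connect that expansion to the tail of $F$ by an Abelian--Tauberian argument for regularly varying functions. Because the measures live on $\R_+$ there is no left tail, so the only regular-variation data is carried by $\bar F(x):=1-F(x)$, and the limiting stable law is automatically one-sided (extremally skewed); this removes the need to balance two tails and isolates $\alpha\in(0,2)$ as the single relevant exponent.

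First I would recall the shape of stable characteristic functions: a stable law of exponent $\alpha$ has $\log\hat\mu(t)=i\gamma t-c\lvert t\rvert^\alpha\bigl(1+i\beta\,\mathrm{sgn}(t)\,\omega(t,\alpha)\bigr)$ for an explicit $\omega$, with the skewness $\beta$ at its extremal value in the one-sided case. By the L\'evy continuity theorem, $\mu'$ lies in the domain of attraction of $\mu$ exactly when one can choose norming constants $A_n,B_n$ with $\hat{\mu'}(t/B_n)^n\,e^{-itA_n/B_n}\to\hat\mu(t)$ for every $t$, equivalently $n\log\hat{\mu'}(t/B_n)-itA_n/B_n\to\log\hat\mu(t)$. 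Thus everything hinges on the behaviour of $1-\hat{\mu'}(s)$ as $s\to0$.

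The heart of the argument is an Abelian estimate. Taking real parts, $\operatorname{Re}\bigl(1-\hat{\mu'}(s)\bigr)=\int_0^\infty(1-\cos sx)\,dF(x)$, and Karamata's theorem for regularly varying functions shows that this quantity is $\sim c\,\lvert s\rvert^\alpha\,h(1/\lvert s\rvert)$ as $s\to0$ if and only if $\bar F(x)\sim h(x)/x^\alpha$ with $h$ slowly varying; the imaginary part is controlled in the same way and fixes the extremal skewness. I would then choose $B_n$ to be essentially the inverse of $1/\bar F$ at $n$, so that $n\bar F(B_n)\to\text{const}$; the regular variation of $\bar F$ of index $-\alpha$ forces $B_n=n^{1/\alpha}\ell(n)$ with $\ell$ slowly varying. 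Feeding this into the previous display and using the slow variation $h(tx)/h(x)\to1$ gives $n\log\hat{\mu'}(t/B_n)\to-c\lvert t\rvert^\alpha(1+\cdots)$, that is, convergence to the stable $\hat\mu$ after absorbing the linear term into $A_n$. This establishes the ``if'' direction.

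For the converse I would run the same chain backwards: if the normalized sums converge to a non-degenerate stable law, the convergence-of-types theorem pins $B_n$ down up to slowly varying factors and forces it to be regularly varying of index $1/\alpha$, and the Tauberian half of Karamata's theorem recovers regular variation of $\bar F$ of index $-\alpha$ from the already-established behaviour of $1-\hat{\mu'}$ near $0$. I expect the main obstacle to be exactly this Tauberian step together with the boundary cases: passing from the small-$s$ behaviour of the characteristic function back to the tail is genuinely Tauberian, so one must invoke a side condition (here the monotonicity of $\bar F$, which is available) to make the implication valid; and the values $\alpha=1$ and $1<\alpha<2$ require careful recentering so that the linear term $i\gamma t$ is soaked up by $A_n$ rather than corrupting the $\lvert t\rvert^\alpha$ asymptotics.
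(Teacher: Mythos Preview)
The paper does not prove this statement at all: it is quoted verbatim as (part of) a classical theorem from Ibragimov--Linnik, and the \qed\ simply signals that the result is taken for granted. So there is no ``paper's own proof'' to compare against.

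That said, your sketch is a reasonable outline of the standard proof one finds in the classical sources (Ibragimov--Linnik, Feller~II, Bingham--Goldie--Teugels): reduce domain-of-attraction to the small-$s$ asymptotics of $1-\hat{\mu'}(s)$ via the L\'evy continuity theorem, and connect those asymptotics to the tail $\bar F$ via Karamata-type Abelian/Tauberian theorems, with the monotonicity of $\bar F$ supplying the Tauberian side condition. The main points you flag---handling the centering for $\alpha\in[1,2)$ and the genuinely Tauberian step in the converse---are exactly the places where care is needed, and your identification of $B_n$ as an asymptotic inverse of $1/\bar F$ is the standard choice. One small caveat: your claim that on $\R_+$ the limit is automatically extremally skewed is correct for $\alpha<1$, but for $\alpha\in(1,2)$ the centering constants $A_n$ can be nontrivial and the stable limit need not be supported on a half-line; this does not affect the tail characterisation, only the description of the limit law.
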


We will also use a local limit theorem due to Gnedenko: again, we only
quote a subcase of the general result.  Recall that a measure $\mu$ on
$\R$ has \emph{density} $g$ if $\mu(M)=\int_Mg(x)dx$ for all
measurable $M\subset\R$. All stable measures have a density, which
furthermore may be supposed to be a continuous (and therefore bounded)
function $g\colon\R\to\R$, see~\cite{ibragimov-linnik:iss}*{\S2.3}.


\begin{theorem}[Part of \cite{ibragimov-linnik:iss}*{Theorem~4.2.1}]\label{thm:linnik2}
  Suppose that $\mu'$ is supported on $\N$, but not on $h\N$ for any
  $h>1$, and suppose that $\mu'$ is in the domain of attraction of a
  stable law with density $g$. Then
  \[\lim_{n\to\infty}\sup_{k\in\N}\left|B_n(\mu')^{*n}(k)-g((k-A_n)/B_n)\right|=0.\qedhere\]
\end{theorem}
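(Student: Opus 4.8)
The plan is to establish this local limit theorem by Fourier analysis, comparing the characteristic function of $(\mu')^{*n}$ with that of the stable density $g$. Write $\phi(t)=\sum_{k\in\N}\mu'(k)e^{itk}$ for the characteristic function of $\mu'$, so that $(\mu')^{*n}$ has characteristic function $\phi^n$; since $\mu'$ lives on the integers, the lattice inversion formula gives $(\mu')^{*n}(k)=\frac1{2\pi}\int_{-\pi}^{\pi}\phi(t)^n e^{-itk}\,dt$. On the other side, $g$ is recovered from its characteristic function $\hat g$ by $g(x)=\frac1{2\pi}\int_{-\infty}^{\infty}\hat g(s)e^{-isx}\,ds$; from the explicit form recalled just before the statement, $|\hat g(s)|=\exp(-|cs|^\alpha)$, which is integrable and is precisely what makes $g$ continuous and bounded.

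First I would rescale. Substituting $t=s/B_n$ and writing $x=(k-A_n)/B_n$, the inversion formula becomes $B_n(\mu')^{*n}(k)=\frac1{2\pi}\int_{-\pi B_n}^{\pi B_n}\psi_n(s)e^{-isx}\,ds$, where $\psi_n(s):=\phi(s/B_n)^n e^{-isA_n/B_n}$. The point of this normalization is that $\psi_n$ is exactly the characteristic function of the law $\mu_n$ of $(X_1+\cdots+X_n-A_n)/B_n$, so the domain-of-attraction hypothesis together with L\'evy's continuity theorem says precisely that $\psi_n(s)\to\hat g(s)$ pointwise. Since $|e^{-isx}|=1$, subtracting the two integral representations and pulling absolute values inside yields the uniform bound
\[\sup_{k\in\N}\bigl|B_n(\mu')^{*n}(k)-g((k-A_n)/B_n)\bigr|\le\frac1{2\pi}\int_{-\infty}^{\infty}\bigl|\psi_n(s)\mathbf{1}_{|s|\le\pi B_n}-\hat g(s)\bigr|\,ds,\]
so the whole theorem reduces to showing that this $L^1$-distance between characteristic functions tends to $0$.

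I would control the integral by splitting the frequency axis. On a fixed compact interval $|s|\le R$, both integrands are bounded by $1$, so the pointwise convergence $\psi_n\to\hat g$ and bounded convergence kill this part. On the far range $\epsilon B_n<|s|\le\pi B_n$ the argument $s/B_n$ stays in $\{\epsilon\le|t|\le\pi\}$, a compact set on which the aperiodicity hypothesis (that $\mu'$ is not supported on any $h\N$) forces $|\phi(t)|\le\eta<1$; hence $|\psi_n(s)|\le\eta^n$ and this contribution is at most $2\pi B_n\eta^n\to0$. Finally the tail $|s|>\pi B_n$ sees only $\hat g$, and $\int_{|s|>\pi B_n}|\hat g(s)|\,ds\to0$ because $B_n\to\infty$ and $\hat g$ is integrable.

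The main obstacle is the intermediate range $R<|s|\le\epsilon B_n$, where neither the pointwise limit nor the crude $\eta^n$ bound is enough and one needs a single integrable majorant valid uniformly in $n$. Here I would feed in the quantitative tail information of Theorem~\ref{thm:linnik1}: the regularly varying tail $F(x)=1-h(x)/x^\alpha$ translates into $1-\operatorname{Re}\phi(t)\sim c\,|t|^\alpha h(1/|t|)$ as $t\to0$, and $B_n$ is chosen exactly so that $n\bigl(1-\phi(s/B_n)\bigr)$ tends to the exponent of $\hat g$. Using $|\phi|^n\le\exp(-n(1-\operatorname{Re}\phi))$ and Potter-type uniform bounds for the slowly varying $h$, this produces a majorant $|\psi_n(s)|\le C\exp(-c'|s|^\alpha)$ valid for all large $n$ on $R<|s|\le\epsilon B_n$; since this exponential is integrable and dominates both $\psi_n$ and $\hat g$, dominated convergence finishes this range. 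Assembling the four contributions shows the right-hand side above tends to $0$, which is the assertion. Securing the uniformity in the slowly varying factor is the delicate point; everything else is routine Fourier estimation.
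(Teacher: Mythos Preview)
The paper does not prove this statement at all: Theorem~\ref{thm:linnik2} is simply quoted from Ibragimov--Linnik (this is Gnedenko's local limit theorem for stable attraction), and the \verb|\qedhere| at the end of the display is there precisely because no proof follows. So there is nothing in the paper to compare against.

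That said, your sketch is the classical argument and is essentially what one finds in the cited source. The four-region decomposition of the frequency axis, the use of aperiodicity to get $|\phi(t)|\le\eta<1$ away from $0$, and the identification of the intermediate range $R<|s|\le\epsilon B_n$ as the place where regular variation must be invoked to produce a uniform integrable majorant --- all of this is correct and is exactly Gnedenko's method. One small caution: the hypothesis ``supported on $\N$ but not on $h\N$ for any $h>1$'' as literally stated does not quite guarantee span~$1$ (a measure on the odd integers satisfies it but has span~$2$), so the step where you bound $|\phi(t)|\le\eta<1$ on $\epsilon\le|t|\le\pi$ implicitly uses the intended reading that the maximal step of the lattice is~$1$. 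In the paper's application to $\nu_\gamma$, which charges every positive integer, there is no issue.
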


\subsection{Proof of Theorem~\ref{thm:expwordgrowth}}


\begin{proof}
  Consider a parameter $\gamma\in(1,2)$, to be fixed later. For
  $i\in\{1,2,\dots\}$, set $\alpha_i=C_\gamma/i^\gamma$ for a
  constant $C_\gamma$ defined such that $\sum_{i=1}^\infty \alpha_i=
  C_\gamma \sum_{i=1}^\infty i^{-\gamma} =1$. Define measures
  $\nu_\gamma$ on $\N$ and $\lambda_\gamma$ on $G$ by
  \[
  \nu_\gamma(i)=\alpha_i,\qquad\lambda_\gamma= \sum_{i=1}^\infty \alpha_i \mu^{*i}.
  \]
  By the definition of $C_\gamma$, both $\nu_\gamma$ and
  $\lambda_\gamma$ are probability measures. The following estimate on
  negative moments of $\nu_\gamma^{*n}$ will be needed later:
  \begin{lemma}\label{elementary}
    For all $\delta>0$ there exists a constant $C$ such that, for all
    $n\ge2$,
    \[\sum_{i=1}^\infty  \nu_\gamma^{*n}(i) /i^\delta \le\begin{cases}
      C/n^{1/(\gamma-1)} \log(n) & \text{ if }\delta=1,\\
      C/n^{\delta/(\gamma-1)} & \text{ if }\delta\neq1.
    \end{cases}
    \]
  \end{lemma}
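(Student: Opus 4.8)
The plan is to estimate the negative moments of the $n$-fold convolution $\nu_\gamma^{*n}$ using the stable-law machinery just recalled. The measure $\nu_\gamma$ on $\N$ has $\nu_\gamma(i) = C_\gamma/i^\gamma$, so its distribution satisfies $1 - F(x) \sim C_\gamma'/x^{\gamma-1}$, which by Theorem~\ref{thm:linnik1} places $\nu_\gamma$ in the domain of attraction of a stable law of exponent $\alpha = \gamma-1 \in (0,1)$. The correct normalizing constants are $B_n \asymp n^{1/(\gamma-1)}$ (and since $\gamma-1<1$ the centering $A_n$ is negligible, i.e.\ we may take $A_n = 0$), so the typical value of the sum of $n$ copies is of order $n^{1/(\gamma-1)}$. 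This already explains the shape of the bound: $\sum_i \nu_\gamma^{*n}(i)/i^\delta = \expect[(X_1+\cdots+X_n)^{-\delta}]$ should behave like $B_n^{-\delta} \asymp n^{-\delta/(\gamma-1)}$, with a logarithmic correction exactly when $\delta=1$.

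To make this rigorous I would split the sum at the scale $B_n \asymp n^{1/(\gamma-1)}$. For the range $i \gtrsim B_n$, I bound $1/i^\delta \le C/B_n^\delta$ and use $\sum_i \nu_\gamma^{*n}(i) \le 1$, giving a contribution of order $n^{-\delta/(\gamma-1)}$. The delicate part is the range of small $i$, where $1/i^\delta$ is large; here I need good pointwise control on $\nu_\gamma^{*n}(i)$. This is precisely what Gnedenko's local limit theorem (Theorem~\ref{thm:linnik2}) supplies: after checking that $\nu_\gamma$ is aperiodic (supported on $\N$ but not on any $h\N$, which holds since $\nu_\gamma(i)>0$ for every $i$), it gives
\[
\nu_\gamma^{*n}(k) = \frac1{B_n}\bigl(g(k/B_n) + o(1)\bigr)
\]
uniformly in $k$, where $g$ is the (bounded, continuous) stable density. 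Thus $\nu_\gamma^{*n}(i) \le C/B_n$ uniformly, and the small-$i$ contribution is at most $(C/B_n)\sum_{i=1}^{B_n} i^{-\delta}$.

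Evaluating this last sum is where the case distinction enters. For $\delta < 1$ one has $\sum_{i\le B_n} i^{-\delta} \asymp B_n^{1-\delta}$, so the small-$i$ contribution is of order $B_n^{-\delta} \asymp n^{-\delta/(\gamma-1)}$, matching the tail estimate. For $\delta > 1$ the sum $\sum_i i^{-\delta}$ converges, so the small-$i$ contribution is of order $B_n^{-1} = n^{-1/(\gamma-1)}$, which dominates the tail bound $n^{-\delta/(\gamma-1)}$ and gives the stated exponent $n^{-\delta/(\gamma-1)}$ only after one checks the minimum of the two exponents; here I should be careful that for $\delta>1$ the claimed bound $n^{-\delta/(\gamma-1)}$ must actually be read as the correct dominating term, so I would verify the bookkeeping of which term wins. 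Finally, for $\delta=1$ the harmonic sum $\sum_{i\le B_n} i^{-1} \asymp \log B_n \asymp \log n$ produces the logarithmic factor, yielding $n^{-1/(\gamma-1)}\log n$ as asserted.

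The main obstacle I anticipate is controlling $\nu_\gamma^{*n}(i)$ for the very smallest values of $i$ (say $i=1,2$), where the local limit theorem only gives a uniform $O(1/B_n)$ bound but the leading behaviour is not captured by the density $g$ near $0$; I expect the uniform bound to suffice since these finitely many terms each contribute $O(1/B_n)$, but one must ensure the $o(1)$ error in Gnedenko's theorem does not accumulate when summed against the slowly decaying weights $1/i^\delta$. Handling this cleanly — rather than the asymptotic equivalences — is what turns the heuristic scaling argument into a genuine upper bound valid for all $n\ge2$ with an explicit constant $C$.
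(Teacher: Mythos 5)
Your proposal is, in substance, exactly the paper's own proof: both verify via the tail criterion (Theorem~\ref{thm:linnik1}) that $\nu_\gamma$ lies in the domain of attraction of a stable law of exponent $\gamma-1$, extract from Gnedenko's local limit theorem (Theorem~\ref{thm:linnik2}) the uniform bound $\nu_\gamma^{*n}(k)\le C'n^{-1/(\gamma-1)}$, split the sum at $a_n=n^{1/(\gamma-1)}$, and run the same case analysis on $\delta$ (uniform bound times $\sum_{i\le a_n}i^{-\delta}$ for small $i$, total mass times $a_n^{-\delta}$ for large $i$).

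The one point where you hesitate is worth spelling out: for $\delta>1$ the small-$i$ contribution under the uniform bound is of order $n^{-1/(\gamma-1)}$, which is \emph{larger} than the claimed $n^{-\delta/(\gamma-1)}$, so this method only yields the exponent $\min(\delta,1)/(\gamma-1)$. Your caution here is well founded, and the gap is shared by the paper: its estimate $\sum_{i\le a_n}i^{-\delta}\le a_n^{1-\delta}/(1-\delta)$ makes sense only for $\delta<1$ (for $\delta>1$ the right-hand side is negative; the sum is in fact bounded by a constant, giving only $C/n^{1/(\gamma-1)}$). Getting the stated exponent for $\delta>1$ would require finer input than the uniform local limit bound, e.g.\ the superpolynomial decay of the stable density at $0^+$ or a Chebyshev-type lower-tail estimate. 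This does not affect the paper's applications: Corollary~\ref{cor:mainB} uses $\delta=1$, and in Proposition~\ref{prop:transient} the weaker exponent $1/(\gamma-1)$ already exceeds $1$ for every $\gamma\in(1,2)$, so the transience conclusion survives.
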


  \begin{proof}
    We first show that $\nu_\gamma$ is in the domain of attraction of
    a stable law, by checking the hypotheses of
    Theorem~\ref{thm:linnik1}. Its distribution satisfies
    $1-F(x)=\sum_{x<i\in\N}C_\gamma i^{-\gamma}$, so
    \[\frac{C_\gamma}{\gamma-1}x^{1-\gamma}=\int_x^\infty C_\gamma
    t^{-\gamma}dt\le1-F(x)\le\int_{x+1}^\infty C_\gamma
    t^{-\gamma}dt=\frac{C_\gamma}{\gamma-1}(x+1)^{1-\gamma},
    \]
    so $C_\gamma/(\gamma-1)\le h(x)\le
    C_\gamma/(\gamma-1)(1+1/x)^{1-\gamma}$ and $h$ is slowly
    varying. Therefore, $\nu_\gamma$ is in the domain of attraction of
    a stable law of exponent $\alpha=\gamma-1$.

    It then follows from~\cite{ibragimov-linnik:iss}*{Theorem~2.1.1}
    that $B_n=n^{1/\alpha}h(n)$ for another function $h$ that slowly
    varies in the sense of Karamata.

    Let $g$ be the density of the stable law towards which
    $\nu_\gamma$ converges. By Theorem~\ref{thm:linnik2}, $\sup_k
    B_n(\nu_\gamma)^{*n}(k)-g((k-A_n)/B_n)$ converges to $0$ as
    $n\to\infty$, and $g$ is bounded, so $\sup_k
    B_n(\nu_\gamma)^{*n}(k)$ is bounded. Therefore, there exists a
    constant $C'$ such that $(\nu_\gamma)^{*n}(k)\le
    C'n^{-1/(\gamma-1)}$ for all $k\in\N$.

    We are now ready to prove the lemma. Set $a_n = n^{1/(\gamma-1)}$,
    and split the sum as
    \[\sum_{i=1}^\infty  \nu_\gamma^{*n}(i) /i^\delta  = \sum_{i=1}^{a_n}  \nu_\gamma^{*n}(i) /i^\delta +
    \sum_{i=a_n+1}^\infty \nu_\gamma^{*n}(i) /i^\delta.
    \]
    In the first summand, we use $\nu_\gamma^{*n}(i) \le C'/
    n^{1/(\gamma-1)}$ for all $i$, so
    \begin{align*}
      \sum_{i=1}^{a_n} \nu_\gamma^{*n}(i) /i^\delta &\le C'/
      n^{1/(\gamma-1)} \sum_{i=1}^{a_n} 1/i^\delta\\
      &\le C'/n^{1/(\gamma-1)}\begin{cases} \log{a_n} \le
        C''/n^{1/(\gamma-1)}\log(n) &\text{ if }\delta=1,\\
      a_n^{1-\delta}/(1-\delta) \le C''/n^{\delta/(\gamma-1)}&\text{ if
        }\delta\neq1,
      \end{cases}
    \end{align*}
    for some constant $C''$. For the second summand, we use the coarse
    estimate
    \[\sum_{i=a_n+1}^\infty \nu_\gamma^{*n}(i) /i^\delta \le 1/a_n^\delta
    \sum_{i=a_n+1}^\infty \nu_\gamma^{*n}(i) \le 1/a_n^\delta =
    1/n^{\delta/(\gamma-1)}
    \]
    and we are done, setting $C=C''+1$.
  \end{proof}

  Let us next find out for which $\gamma$ the random walk on $X$
  defined by $\lambda_\gamma$ is transient. The argument is close to
  that of \cite{erschler:critical}*{Lemma 3.1}.  In that lemma, it was
  shown that for any transitive action of $G$ on an infinite set $X$,
  the measures $\lambda_\gamma$ define a transient random walk on $X$
  as soon as $\gamma\in(1,3/2)$. For the proof of
  Theorem~\ref{thm:expwordgrowth}, however, it is not sufficient to
  work with $\gamma$ between $1$ and $3/2$, because the theorem's
  assumptions do not imply that $\lambda_\gamma$ has finite first
  moment for some $\gamma<3/2$. Indeed, we will use in an essential
  manner the additional assumption on the action to weaken the
  condition on $\gamma$.

  \begin{proposition}\label{prop:transient}
    Suppose that the probability of return to the origin $\rho\in X$
    for the random walk on $X$ induced by the measure $\mu$ satisfies
    $\mu^{*n}(\stab_G(\rho)) \le C/n^\delta$ for some $\delta>0$.
    Then, for all $\gamma\in(1,1+\delta)$, the random walk
    $(\lambda_\gamma,X)$ is transient.
  \end{proposition}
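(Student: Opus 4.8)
The plan is to reduce transience to a Green-function (expected number of returns) computation and then to feed in Lemma~\ref{elementary}. Recall that for the induced walk on the $G$-orbit of $\rho$ (a countable set, since $G$ is finitely generated), transience is equivalent to finiteness of the expected number of returns to $\rho$, namely $\sum_{n\ge1}\lambda_\gamma^{*n}(\stab_G(\rho))<\infty$. Here I use that $\mu$ is symmetric, so each $\mu^{*i}$ is symmetric and hence so is the convex combination $\lambda_\gamma$; thus $\lambda_\gamma=\check\lambda_\gamma$ and the probability of being back at $\rho$ at time $n$ is exactly $\lambda_\gamma^{*n}(\stab_G(\rho))$, with no discrepancy between $\lambda_\gamma$ and its reflection.

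First I would expand the convolution powers of $\lambda_\gamma$ in terms of those of $\mu$. Since $\lambda_\gamma=\sum_{i\ge1}\nu_\gamma(i)\,\mu^{*i}$, grouping terms by the total exponent gives
\[\lambda_\gamma^{*n}=\sum_{k\ge1}\nu_\gamma^{*n}(k)\,\mu^{*k}.\]
Evaluating on $\stab_G(\rho)$ and invoking the hypothesis $\mu^{*k}(\stab_G(\rho))\le C/k^\delta$ yields
\[\lambda_\gamma^{*n}(\stab_G(\rho))=\sum_{k\ge1}\nu_\gamma^{*n}(k)\,\mu^{*k}(\stab_G(\rho))\le C\sum_{k\ge1}\nu_\gamma^{*n}(k)/k^\delta,\]
where the interchange of the two summations is legitimate because every term is nonnegative. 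Note also that $\nu_\gamma$ is supported on $\{1,2,\dots\}$, so $\nu_\gamma^{*n}$ is supported on $k\ge n\ge1$ and no division by zero occurs.

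Next I apply Lemma~\ref{elementary} to the inner sum: it bounds $\sum_k\nu_\gamma^{*n}(k)/k^\delta$ by $C/n^{\delta/(\gamma-1)}$ when $\delta\neq1$, and by $C/n^{1/(\gamma-1)}\log(n)$ when $\delta=1$. In either case $\lambda_\gamma^{*n}(\stab_G(\rho))$ decays, up to a possible logarithmic factor, like $n^{-\delta/(\gamma-1)}$. Summing over $n$, the hypothesis $\gamma\in(1,1+\delta)$ is precisely equivalent to $\delta/(\gamma-1)>1$, so $\sum_{n\ge1}n^{-\delta/(\gamma-1)}$ converges; the expected number of returns is finite, and the walk $(\lambda_\gamma,X)$ is transient.

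This argument is essentially routine once the expansion $\lambda_\gamma^{*n}=\sum_k\nu_\gamma^{*n}(k)\mu^{*k}$ and Lemma~\ref{elementary} are in hand. The only point requiring care is the use of the \emph{strict} inequality $\gamma<1+\delta$: it is exactly what pushes the exponent $\delta/(\gamma-1)$ strictly above $1$, and in the borderline case $\delta=1$ it guarantees that the extra $\log(n)$ factor from Lemma~\ref{elementary} is harmless, since $\sum_n n^{-1/(\gamma-1)}\log(n)$ still converges whenever $1/(\gamma-1)>1$.
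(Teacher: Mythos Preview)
Your argument is correct and follows essentially the same route as the paper: expand $\lambda_\gamma^{*n}$ as $\sum_k\nu_\gamma^{*n}(k)\mu^{*k}$, apply the return-probability hypothesis and Lemma~\ref{elementary}, and sum the resulting bound in $n$. If anything you are slightly more careful than the paper, which simply quotes the $\log(n)$ bound from Lemma~\ref{elementary} for all $\delta$ rather than splitting into cases.
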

  \begin{proof}
    For any $H\subset G$, we have
    \[\lambda_\gamma^{*n}(H)= \sum_{i\ge 0} \nu_\gamma^{*n}(i) \mu^{*i} (H).
    \]
    In particular, this holds with $H$ the stabilizer of $\rho\in X$:
    \[\lambda_\gamma^{*n}(\stab_G(\rho))=\sum_{i\ge 1} \nu_\gamma^{*n}(i)
    \mu^{*i} (\stab_G(\rho)).
    \]
    By Lemma~\ref{elementary} we know that for any $\delta$
    
    \[\sum_{i=1}^\infty  \nu_\gamma^{*n}(i) /i^\delta \le C/n^{\delta/(\gamma-1)} \log(n).\]
    \noindent Therefore, for all $n\ge2$ we have
    \begin{align*}
      \lambda_\gamma^{*n}(\stab_G(\rho)) &= \sum_{i=1}^\infty \nu_\gamma^{*n}(i) \mu^{*i} (\stab_G(\rho))\\
      &\le\sum_{i=1}^\infty\nu_\gamma^{*n}(i)C/i^\delta\le C/n^{\delta/(\gamma-1)} \log(n).
    \end{align*}
    Since $\gamma<1+\delta$, we have $\delta/(\gamma-1)>1$, so
    \[\sum_{n=0}^\infty \lambda_\gamma^{*n}(\stab_G(\rho)) < \infty.\]

    This means that $\stab_G(\rho)$ is a transient subgroup for the
    measure $\lambda_\gamma$, or, in other words, that the random walk
    on $X$ induced by the measure $\lambda_\gamma$ is transient.
  \end{proof}

  We can now finish the proof of Theorem~\ref{thm:expwordgrowth}. It is
  time to use the assumption on the first moment of $\mu$.

  \begin{lemma}\label{lem:estimatemoment}
    For $\gamma>1+\alpha$, the first moment of the measure
    $\lambda_\gamma$ is finite.
  \end{lemma}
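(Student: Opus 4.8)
The plan is to compute the first moment of $\lambda_\gamma$ directly as a weighted average of the first moments of the convolution powers $\mu^{*i}$, and then to feed in the sublinear-drift hypothesis term by term. Since $\lambda_\gamma=\sum_{i\ge1}\alpha_i\mu^{*i}$ is by construction an (infinite) convex combination, and every quantity in sight is nonnegative, I would first interchange the two summations by Tonelli's theorem:
\[
L(\lambda_\gamma)=\sum_{g\in G}\|g\|_S\sum_{i=1}^\infty\alpha_i\mu^{*i}(g)=\sum_{i=1}^\infty\alpha_i\sum_{g\in G}\|g\|_S\mu^{*i}(g)=\sum_{i=1}^\infty\alpha_i L(\mu^{*i}).
\]
Thus the first moment of $\lambda_\gamma$ is exactly the $\nu_\gamma$-average of the sequence $L(\mu^{*i})$, i.e.\ of the subadditive function $L(i)$ introduced in Section~\ref{sec:def}.

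Next I would insert the drift bound. The hypothesis of Theorem~\ref{thm:expwordgrowth} gives $L(\mu^{*i})\le D i^\alpha$ for all $i$, while by definition $\alpha_i=C_\gamma/i^\gamma$. Substituting,
\[
L(\lambda_\gamma)\le\sum_{i=1}^\infty\frac{C_\gamma}{i^\gamma}\,D i^\alpha=DC_\gamma\sum_{i=1}^\infty\frac1{i^{\gamma-\alpha}}.
\]
This is a $p$-series with exponent $\gamma-\alpha$, which converges precisely when $\gamma-\alpha>1$, that is, when $\gamma>1+\alpha$. This is exactly the hypothesis of the lemma, so $L(\lambda_\gamma)<\infty$ and we are done.

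There is no genuine obstacle here: once the order of summation is exchanged, the estimate collapses to the convergence of a single $p$-series, and the threshold $\gamma>1+\alpha$ falls out automatically. The one point worth flagging is a minor bookkeeping matter: $\lambda_\gamma$ is a measure on $G$, whereas the first moment ultimately of interest is that of a measure on $W$; but since the standard generating set $S$ of $W$ contains $S_G$, the $W$-length of an element of $G$ is at most its $G$-length, so finiteness of the $G$-first moment computed above at once yields finiteness of the $W$-first moment. The conceptual payoff of the lemma is its interplay with Proposition~\ref{prop:transient}: transience of $(\lambda_\gamma,X)$ requires $\gamma<1+\delta$, while finite first moment requires $\gamma>1+\alpha$, so the admissible window $1+\alpha<\gamma<1+\delta$ is nonempty precisely because the theorem assumes $\delta>\alpha$. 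This is exactly where that hypothesis is consumed.
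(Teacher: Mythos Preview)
Your proof is correct and follows essentially the same route as the paper: express $L(\lambda_\gamma)$ as $\sum_i \nu_\gamma(i) L_\mu(i)$, plug in the drift bound $L_\mu(i)\le D i^\alpha$ and the weights $\alpha_i=C_\gamma i^{-\gamma}$, and reduce to the $p$-series $\sum_i i^{-(\gamma-\alpha)}$. Your explicit invocation of Tonelli and your remark on the $G$-versus-$W$ first moment are more careful than the paper's one-line version, but the argument is the same.
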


  \begin{proof}
    Recall that $L_\mu(i)$ denotes the first moment of the measure
    $\mu^{*i}$. By our assumption, there exists a constant $D$ such
    that $L_\mu(i)<Di^\alpha$ for all $i\in\N$.  The first moment of
    $\lambda_\gamma$ is therefore equal to
    \[\sum_{i=1}^\infty \nu_\gamma(i)L_\mu(i) \le D
    \sum_{i=1}^\infty C_\gamma i^{-\gamma} i^\alpha = D C_\gamma
    \sum_{i=1}^\infty i^{-(\gamma-\alpha)} < \infty,
    \]
    if $\gamma-\alpha>1$.
  \end{proof}

  Now fix $\gamma\in(1+\alpha,1+\delta)$. By
  Proposition~\ref{prop:transient}, the random walk $\lambda_\gamma$ is
  transient, while by Lemma~\ref{lem:estimatemoment} the first moment of
  $\lambda_\gamma$ is finite.

  Take a measure $\mu_A$ on $A$ with finite first moment, whose
  support contains $1$ and generates $A$. Set
  $\lambda=\mu_A*\lambda_\gamma*\mu_A$. Observe that $\lambda$ is a
  non-degenerate random walk with finite first moment, and that the
  induced random walk on $X$ is transient.  Therefore, by
  Proposition~\ref{prop:transientimpliesnottriv}, the boundary of
  $(W,\lambda)$ is non-trivial. This completes the proof of
  Theorem~\ref{thm:expwordgrowth}.

  Alternatively, note that $\lambda$ is a ``switch-translate-switch''
  random walk, so that Proposition~\ref{prop:additional} applies.
\end{proof}

\subsection{Consequences of Theorem~\ref{thm:expwordgrowth}}

\begin{corollary}\label{cor:mainB}
  Let $\alpha<1$ be given; for each $i=1,2$, let $G_i$ act
  transitively on an infinite set $X_i$, and let $\mu_i$ be a finitely
  supported, symmetric, non-degenerate probability measure whose drift
  satisfies $L_{\mu_i}(n)\le D n^\alpha$ for a constant $D$ and all
  $n\in\N$.

  Set $G=G_1\times G_2$ and $X=X_1\times X_2$, on which $G$ acts
  \coordinate wise. Let $A$ be a non-trivial group.

  Then $W:=A\wr_X G$ admits a symmetric measure with finite first
  moment and non-trivial Poisson-Furstenberg boundary.  In particular,
  the word growth of $W$ is exponential.
\end{corollary}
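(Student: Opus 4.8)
The plan is to derive Corollary~\ref{cor:mainB} as a direct application of Theorem~\ref{thm:expwordgrowth}. The corollary is almost a special case of the theorem, so the main work is to verify that the product situation $G=G_1\times G_2$ acting on $X=X_1\times X_2$ satisfies both hypotheses of the theorem for a suitably chosen measure $\mu$ on $G$.

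First I would set $\mu=\tfrac12(\mu_1\otimes\delta_1)+\tfrac12(\delta_1\otimes\mu_2)$, the ``lazy'' combination that moves in one of the two factors at each step. Since each $\mu_i$ is finitely supported, symmetric and non-degenerate on $G_i$, the measure $\mu$ is finitely supported, symmetric, and non-degenerate on $G$. The drift hypothesis transfers easily: a length-$n$ trajectory of $\mu$ splits into roughly $n/2$ steps in each factor, and by the triangular inequality $\|(g_1,g_2)\|_S\le\|g_1\|_{S_1}+\|g_2\|_{S_2}$, so $L_\mu(n)\le L_{\mu_1}(n)+L_{\mu_2}(n)\le 2Dn^\alpha$, preserving the sublinear bound with the same exponent~$\alpha$.

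The step I expect to be the main obstacle is verifying the return-probability hypothesis with an exponent $\delta>\alpha$. Here I would use the assumption that each $X_i$ is \emph{infinite} and the action transitive. Since $\rho=(\rho_1,\rho_2)$ is fixed by $(g_1,g_2)\in G$ exactly when $g_i$ fixes $\rho_i$ for both $i$, and because at each step only one coordinate moves, I would estimate $\mu^{*n}(\stab_G(\rho))$ by conditioning on the number $k$ of steps taken in the first factor: this is a product of the two coordinate return probabilities. The crucial input is that for an \emph{infinite} transitive Schreier graph the induced walk $(X_i,\mu_i)$ has return probability tending to zero, giving each factor a bound $o(1)$; combining the two independent coordinates yields a return probability decaying at least like $n^{-1}$ up to logarithmic factors. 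One must check that this genuinely beats $\alpha<1$, i.e. that $\delta>\alpha$ can be achieved --- this is exactly the point, noted after Theorem~\ref{thm:mainB} in the introduction, where the \emph{product structure} ($d\ge2$) is essential, since a single infinite transitive Schreier graph need only satisfy the feeble bound $o(1)$, whereas the product of two such walks forces a polynomial decay with exponent at least~$1>\alpha$.

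Having established both hypotheses with $\delta>\alpha$, I would simply invoke Theorem~\ref{thm:expwordgrowth} for the group $G=G_1\times G_2$, the action on $X=X_1\times X_2$, and the measure $\mu$, to conclude that $W=A\wr_X G$ admits a symmetric, finite-first-moment measure with non-trivial Poisson--Furstenberg boundary, whence $W$ has exponential word growth. The only genuinely new content beyond Theorem~\ref{thm:expwordgrowth} is the product estimate on return probabilities, so I would present that estimate carefully and leave the drift bound and the final invocation as short verifications.
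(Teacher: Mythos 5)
Your overall strategy is the same as the paper's: build a measure on $G=G_1\times G_2$ from $\mu_1,\mu_2$, verify the drift and return-probability hypotheses, and invoke Theorem~\ref{thm:expwordgrowth} with $\delta=1>\alpha$. (The paper takes the product measure $\mu=\mu_1\times\mu_2$ rather than your lazy combination; this makes the return probability factor exactly, $\mu^{*n}(\stab_G(\rho))=\mu_1^{*n}(\stab_{G_1}(\rho_1))\,\mu_2^{*n}(\stab_{G_2}(\rho_2))$, with no conditioning on how many steps fall in each coordinate. Your choice would also work, and is even non-degenerate on $G$ in cases where the product measure is not, but it forces an extra binomial large-deviation argument.)

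However, there is a genuine gap at exactly the step you flag as crucial. You argue that each factor walk on an infinite transitive Schreier graph has return probability $o(1)$, and that ``combining the two independent coordinates yields a return probability decaying at least like $n^{-1}$.'' This is a non sequitur: the product of two quantities that merely tend to zero is again merely $o(1)$. If, hypothetically, each factor had return probability of order $1/\log n$, the product would be of order $1/(\log n)^2$, which is not $O(n^{-\delta})$ for any $\delta>0$, so the hypothesis $\delta>\alpha$ of Theorem~\ref{thm:expwordgrowth} could never be met this way. The input you are missing --- and the one the paper uses --- is quantitative and holds already for each single factor: for any symmetric nearest-neighbour random walk on a connected \emph{infinite} locally finite graph, the $n$-step transition probabilities satisfy $p_n(x,y)\le C/\sqrt n$; see Woess~\cite{woess:rw}*{Corollary 14.6}. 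Applied to each Schreier graph $X_i$ (with edges given by $\supp(\mu_i)$, so that one-step probabilities along edges are bounded below), this gives $\mu_i^{*n}(\stab_{G_i}(\rho_i))\le C_i/\sqrt n$, and only then does the product structure yield $\mu^{*n}(\stab_G(\rho))\le C/n$, i.e.\ $\delta=1>\alpha$. In other words, the product of the two factors is not what ``forces'' polynomial decay; it is what upgrades the universal polynomial exponent $1/2$ per factor to the exponent $1$ needed to beat $\alpha<1$.
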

\begin{proof}
  Set $\mu=\mu_1\times\mu_2$; it is the random walk on $X$ that walks
  independently on $X_1$ and $X_2$. Choose a basepoint $\rho=(\rho_1,\rho_2)
  \in X$. Observe $\stab_G(\rho)\cap G_1 = \stab_{G_1}(\rho_1)$ and
  $\stab_G(\rho) \cap G_2 = \stab_{G_2}(\rho_2)$.  For all $n\ge 0$ we have
  $\mu^{*n}(\stab_G(\rho)) =\mu_1^{*n}(\stab_{G_1}(\rho_1))\mu_2^{*n}(\stab_{G_2}(\rho_2))$.

  We say that a symmetric random walk on a connected locally finite
  graph is a \emph{nearest neighbour} random walk if it is a
  symmetric random walk which walks along the edges of the graph with
  probability bounded away from zero: $p_1(x,y)=p_1(y,x)$,
  $p_1(x,y)>0$ implies that $x$ and $y$ are joined by an edge, and
  there exists $p>0$ such that $p_1(x,y) \ge p$ whenever $x$ and $y$
  are joined by an edge.

  For a nearest-neighbour symmetric random walk on a connected
  infinite locally finite graph, the $n$-step transition probabilities
  satisfy $p_n(x,y) \le C'/\sqrt{n}$ for some $C'>0$ and all $n\ge 1$,
  and, in particular, the probability to return to the origin
  satisfies $p_n(x,x) \le C'/\sqrt{n}$ for all $x$ and
  all $n\ge 1$; see Woess~\cite{woess:rw}*{Corollary 14.6}.

  This implies $\mu_1^{*n}(\stab_{G_1}(\rho_1))\le C_1/\sqrt n$ and
  $\mu_2^{*n}(\stab_{G_2}(\rho_2)) \le C_2/\sqrt n$ for some constants
  $C_1,C_2$ depending on $X_1,X_2$ and all $n\ge 1$.  Therefore,
  $\mu^{*n}(\stab_G(\rho)) \le C/n$, for $C=C_1C_2$ and all $n\ge1$.

  Consider $S=S_1 \cup S_2$. Clearly, $S$ is a generating set of
  $G=G_1 \times G_2$ whenever $S_1$ and $S_2$ are generating sets of
  $G_1$ and $G_2$ respectively, and
  $\|(g_1,g_2)\|_S=\|g_1\|_{S_1}+\|g_2\|_{S_2}$ for all $g_1 \in G_1$,
  $g_2 \in G_2$.  For all $n\ge 0$, we have
  $L_{\mu,G,S}(n)=L_{\mu_1,G_1,S_1}(n)+L_{\mu_2, G_2,S_2}(n)$; so, by
  the assumptions of the corollary, $L_\mu(n)=L_{\mu,G,S}(n) \le C
  n^\alpha$.

  We may therefore apply Theorem~\ref{thm:expwordgrowth} with $\delta=1$.
\end{proof}

Groups satisfying the assumption of the theorem admit a symmetric
measure of finite first moment whose boundary is non-trivial. However,
there are groups of exponential growth, such as for example wreath
products of a finite group with $\Z$, on which any symmetric finite
first moment measure has trivial boundary.

\begin{remark}
  The assumption that $X$ is a direct product is important, and is
  used to bound from above the return probabilities to the
  origin. There are examples of wreath products with infinite $X$,
  such as the group $A\wr_{X_1}\Grig$ studied
  in~\cite{bartholdi-erschler:permutational}, that have intermediate
  word growth and therefore trivial boundary for all measures of
  finite first moment.
\end{remark}

\begin{example}\label{ex:d=3}
  Consider $G=G_1\times G_2\times G_3$ and $X=X_1\times X_2\times X_3$
  with all $X_i$ infinite, transitive $G_i$-spaces. Then all
  permutational wreath products $A\wr_X G$ have exponential word
  growth, without any assumption on the $\mu_i$. Indeed, all simple
  random walks on these groups have a non-trivial boundary, as follows
  from Proposition~\ref{prop:transientimpliesnottriv}.
\end{example}

\begin{remark}\label{growthanddrift}
  Let $G$ be a group with word growth $v(n)$ at most $\exp(n^\beta)$ for
  some $\beta<1$, and let $\mu$ be a finitely supported measure on
  $G$. Then $L_{G,\mu}(n) \le C n^{(1+\beta)/2}$.
\end{remark}
\begin{proof}
  For any symmetric finitely supported random walk on a group $G$,
  there exists $K>0$ such that $L(n) \le K \sqrt{n \log v(n)+\log
    (n)}$ for all $n$, see~\cite{erschler:drift}*{Lemma~7.(ii)}.
\end{proof}

\begin{example}\label{ex:firstgrigorchuk}
  Consider $G_1$ and $G_2$ both equal to the first Grigorchuk group
  $\Grig$, and $X_1$ and $X_2$ some orbits for the action on the
  boundary of the rooted tree.  Recall that $\Grig$ has subexponential
  word growth, and more precisely by~\cite{grigorchuk:growth} has
  growth at most $\exp(n^\beta)$ for some $\beta<1$. The best known
  upper bound is $\beta=\log(2)/\log(2/\eta)\cong0.7674$ with
  $\eta^3+\eta^2+\eta=2$, see~\cite{bartholdi:upperbd}.  In view of
  Remark~\ref{growthanddrift}, the assumptions of
  Corollary~\ref{cor:mainB} are satisfied for $\alpha=(1+\beta)/2$, so
  $A\wr_{X_1\times X_2}(G_1\times G_2)$ has exponential growth as soon
  as $A$ is not trivial.
\end{example}

Among the Grigorchuk groups, there are groups with growth arbitrarily
close to exponential along a subsequence~\cite{grigorchuk:gdegree},
and in particular not bounded from above by any function of the form
$\exp(n^\alpha)$. We cannot use Remark~\ref{growthanddrift} to
estimate the drift of simple random walks on such groups. However,
every Grigorchuk group admits a finitely supported random walk whose
drift function is bounded from above by $Cn^\alpha$ for $\alpha=3/4$,
see Corollary~\ref{driftgrigorchuk}.

\section{A sufficient condition for triviality of the Poisson-Furstenberg boundary}\label{sec:trivboundary}
It is well known that the triviality of the boundary of an ordinary
wreath product of $A\wr G$ is related to the recurrence of $G$, see
Kaimanovich and
Vershik~\cite{kaimanovich-v:entropy}*{Proposition~6.4}. However, their
argument does not seem to provide information about the triviality of
the boundary in the case of a permutational wreath product $A\wr_XG$, in which
the action of $G$ on $X$ is recurrent. Indeed, let $W=A\wr_XG$ be a
permutational wreath product, let $\rho$ be a point of $X$ and let
$W'$ be the subgroup of $W$ that projects to the stabilizer of $\rho$
in $G$.  Starting with a random walk on $W$ which induces a recurrent
random walk on $X$, we can claim (by renormalizing the random walk at
stop times in the stabilizer of $\rho$) that the boundary of this
random walk is equivalent to the boundary of some (in general,
infinitely supported) random walk on $W'$; however, in contrast with
the ordinary wreath product case the group $W'$ may be large, even if
$A$ is small.

Another approach to criteria for triviality of the boundary in the case of
ordinary wreath products $A\wr G$, in which the induced random walk on
$G$ is recurrent, is to estimate the entropy of the random
walk~\cite{dyubina:characteristics}.  The proposition below is an
analogue of such a criterion, but now in the case of permutational
wreath products. The main difficulty of the proof of this proposition,
which does not appear in the case of ordinary wreath products, is in
the estimation of the number of choices of the inverted orbit, see
Remark~\ref{rem:inv-direct}.

\begin{proposition}\label{prop:recurrenceimpliestriv}
  Let $A,G$ be groups of subexponential word growth, and set
  $W:=A\wr_X G$. Let $\mu$ be finitely supported probability measure
  on $W$.

  If the expected inverted orbit growth of the projected random walk
  $(X,\overline\mu)$ grows sublinearly, then $h(\mu)=0$; so the random
  walk on $(W, \mu)$ has trivial Poisson-Furstenberg boundary.
\end{proposition}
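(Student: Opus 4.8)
The plan is to bound the asymptotic entropy $h(\mu)$ by estimating the number of distinct trajectories in $W$ of length $n$ that the random walk can realize, since $H(\mu^{*n})\le\log|\supp(\mu^{*n})|$ and $h(\mu)=\lim H(\mu^{*n})/n$. An element $fg\in W=\sum_XA\rtimes G$ reached after $n$ steps is determined by two pieces of data: the group element $g\in G$, and the configuration $f:X\to A$. The number of possible $g$ is at most the number of words of length $n$ in $G$, which is subexponential by hypothesis. The configuration $f$ is supported on the inverted orbit of the trajectory, so I must count, first, how many inverted orbits of size $\approx\delta(w)$ can occur, and second, how many ways one can decorate the (at most $\delta(w)$) points of that orbit with values in $A$.

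First I would make the entropy-versus-counting link precise: fix $\epsilon>0$ and, using the sublinearity hypothesis $\expect[\delta_\rho(w)]=o(n)$ together with Markov's inequality (or a concentration argument), restrict attention to the overwhelming-probability event that $\delta(w)\le\epsilon n$. On that event the configuration $f$ is supported on at most $\epsilon n$ points, so the number of possible values of $f$ is at most (number of admissible supports) $\times\,|B_A(n)|^{\epsilon n}$, where $B_A(n)$ is the word-ball of radius $n$ in $A$; since $A$ has subexponential growth, $\log|B_A(n)|=o(n)$, making the decoration factor contribute only $o(n)\cdot\epsilon n$ to the log-count, hence negligible after dividing by $n$ and letting $\epsilon\to0$. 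The $G$-coordinate likewise contributes $\log|B_G(n)|=o(n)$.

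The hard part, flagged in the excerpt's reference to Remark~\ref{rem:inv-direct}, is counting the \emph{number of distinct inverted orbits} of size $\le\epsilon n$ that can arise, \emph{independently} of the trajectory's $G$-value. In the ordinary wreath product case ($X=G$) the orbit $\{\rho g_i\cdots g_n\}$ is determined directly by the $G$-trajectory, so no extra counting is needed; but here the inverted orbit is a subset of $X$ built from \emph{partial} products $\rho g_i\cdots g_n$ read in reverse, and different $G$-trajectories can give rise to genuinely different configuration-supports. I must show that the number of such subsets is subexponential — presumably by encoding an inverted orbit of size $k$ as a choice of $k$ increments together with the times at which a \emph{new} point is visited, and bounding this by something like $\binom{n}{k}$ times a subexponential factor coming from the $G$-growth. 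The binomial $\binom{n}{\epsilon n}$ does contribute a genuine exponential $\exp(n\,H(\epsilon))$, but its exponent $H(\epsilon)\to0$ as $\epsilon\to0$, so it too is killed in the limit.

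Assembling these estimates, I obtain for every $\epsilon>0$ a bound of the form
\[
\frac1n H(\mu^{*n})\le o(1)+H(\epsilon)+\epsilon\cdot o(1),
\]
and letting $n\to\infty$ and then $\epsilon\to0$ forces $h(\mu)=0$. By the Entropy Criterion (Derriennic, Kaimanovich--Vershik), since $\mu$ is finitely supported and hence of finite entropy, $h(\mu)=0$ is equivalent to triviality of the Poisson--Furstenberg boundary, which completes the proof. I expect the delicate point to be making the inverted-orbit count uniform in the $G$-trajectory while keeping the exponential contribution confined to the $\binom{n}{\epsilon n}$ term, so that subexponential growth of $A$ and $G$ can genuinely absorb everything else.
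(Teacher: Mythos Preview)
Your overall architecture is exactly that of the paper: restrict to trajectories with small inverted orbit via Markov, encode the inverted orbit by the tuple of $G$-increments between first-visit times, and then count decorations. The paper packages the combinatorial count as Lemma~\ref{lem:subexp+}, which bounds $\#\{(g_1,\dots,g_k)\in G^k:\sum\|g_i\|\le n,\ k\le\delta(n)\}$ for sublinear $\delta$; your $\binom{n}{k}$-plus-increments encoding is the same idea and would go through.

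There is, however, a genuine gap in your decoration count. You bound the number of configurations $f$ by $|B_A(n)|^{\epsilon n}$, arguing that each of the $\le\epsilon n$ occupied sites carries an $A$-element of word length $\le n$. Taking logarithms gives $\epsilon n\cdot\log|B_A(n)|$, and after dividing by $n$ you get $\epsilon\cdot\log|B_A(n)|$, not $\epsilon\cdot o(1)$ as you write in your assembled inequality. Since $\log|B_A(n)|\to\infty$ whenever $A$ is infinite, this term diverges for every fixed $\epsilon>0$ and kills the argument except in the finite-$A$ case. The missing observation is that the $A$-word-lengths at the various sites are not independently bounded by $n$: their \emph{sum} is bounded by a constant times $n$, because the total number of $A$-steps in an $n$-step trajectory is at most $|Y|n$ where $Y$ is the finite set of sites touched by a single increment. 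With this sum constraint the decoration count becomes $\#\{(a_1,\dots,a_k)\in A^k:\sum\|a_j\|_A\le Cn,\ k\le\epsilon n\}$, which is controlled by the very same combinatorial lemma you already need for the inverted-orbit count (partition $Cn$ into $k$ parts, multiply the $A$-balls, and use subexponentiality of $A$). Once you make this correction, your $\epsilon\to0$ scheme and the paper's direct use of the sublinear bound $2\delta(n)$ are essentially equivalent.
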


\begin{lemma}\label{lem:subexp+}
  Let $G$ be a group of subexponential word growth, and let
  $\delta\colon \N\to\N$ be a sublinear function. Then the function
  \[v^+(n):=\#\{(g_1,\dots,g_k)\in G^k\mid k\le \delta(n), \|g_1\|+\cdots+\|g_k\|\le n\}\]
  grows subexponentially.
\end{lemma}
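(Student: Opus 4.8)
The plan is to replace the naive cardinality count by a Chernoff-type (generating-function) estimate, so that the total length budget $n$ is spent at an exponential rate that can be made as small as we like. Write $v(m)=\#\{g\in G:\|g\|\le m\}$ for the growth function and $s(m)=\#\{g\in G:\|g\|=m\}$ for the sphere sizes. Subexponential growth of $G$ means $s(m)^{1/m}\to1$, so the power series
\[f(x):=\sum_{g\in G}x^{\|g\|}=\sum_{m\ge0}s(m)x^m\]
has radius of convergence $1$ and is finite for every $x\in(0,1)$; this is the only place subexponential growth enters, and I would record it at the outset.

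First I would bound the number of $k$-tuples of bounded total length. Fix $x\in(0,1)$ and set $N_k(n):=\#\{(g_1,\dots,g_k)\in G^k:\|g_1\|+\cdots+\|g_k\|\le n\}$. Since $x<1$ and $\sum_i\|g_i\|\le n$ give $1\le x^{-n}x^{\sum_i\|g_i\|}$, summing this inequality over all admissible tuples and then dropping the length restriction yields
\[N_k(n)\le x^{-n}\sum_{(g_1,\dots,g_k)\in G^k}x^{\|g_1\|+\cdots+\|g_k\|}=x^{-n}f(x)^k.\]
Because $f(x)\ge x^{\|1\|}=1$, the quantities $f(x)^k$ are non-decreasing in $k$, so summing over $k\le\delta(n)$ (abbreviate $d:=\delta(n)$) gives $v^+(n)=\sum_{k\le d}N_k(n)\le(d+1)\,x^{-n}f(x)^d$.

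Taking logarithms,
\[\frac1n\log v^+(n)\le\log(1/x)+\frac{\log(d+1)}{n}+\frac{d}{n}\log f(x).\]
Here $x$ is fixed, so $f(x)$ is a constant, and sublinearity of $\delta$ gives $d=\delta(n)=o(n)$; hence the last two terms tend to $0$ and $\limsup_n\frac1n\log v^+(n)\le\log(1/x)$. Since this holds for every $x\in(0,1)$ and $\log(1/x)\to0$ as $x\to1^-$, we conclude $\limsup_n\frac1n\log v^+(n)=0$, that is, $v^+$ grows subexponentially.

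The point to stress — and the reason a naive argument fails — is that bounding each $g_i$ by the full ball of radius $n$ would give only $v^+(n)\le (v(n)+1)^{\delta(n)}$, whose logarithm $\delta(n)\log v(n)$ is a product of two $o(n)$ quantities and so need not be $o(n)$. The whole difficulty is therefore to exploit the constraint $\|g_1\|+\cdots+\|g_k\|\le n$ rather than the weaker $\|g_i\|\le n$ for each $i$; the generating-function weighting does exactly this, trading the combinatorial overcount of compositions for the tunable exponential factor $x^{-n}$, whose rate $\log(1/x)$ can be driven to zero. No delicate estimate beyond the convergence of $f$ on $(0,1)$ is needed.
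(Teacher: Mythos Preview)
Your proof is correct and takes a genuinely different route from the paper's. The paper bounds $v^+(n)$ combinatorially: it writes
\[v^+(n)\le n\,\delta(n)\binom{n+\delta(n)-1}{\delta(n)}\max_{n_1+\cdots+n_{\delta(n)}=n}v(n_1)\cdots v(n_{\delta(n)}),\]
then uses Stirling's formula to show that the binomial coefficient $\binom{n+\delta(n)-1}{\delta(n)}$ is subexponential when $\delta(n)/n\to 0$, and bounds the product via $\log v(m)\le\epsilon m+C$. Your Chernoff-type argument replaces all of this by the single inequality $N_k(n)\le x^{-n}f(x)^k$, trading the explicit count of compositions (the binomial coefficient) for the tunable rate $\log(1/x)$. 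The paper's approach is more hands-on and yields an explicit combinatorial upper bound at each stage; yours is shorter, avoids Stirling entirely, and makes transparent that the only input needed from subexponential growth is the convergence of the spherical generating series on $(0,1)$. Both exploit the total-length constraint $\sum_i\|g_i\|\le n$ in the same essential way --- the paper through $\sum_i(\epsilon n_i+C)=\epsilon n+C\delta(n)$, you through $\prod_i x^{\|g_i\|}\ge x^n$ --- so the underlying idea is the same, packaged differently.
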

\begin{proof}
  Let $v(n)$ denote the growth function of $G$; then by hypothesis,
  for every $\epsilon>0$, there exists $C$ such that $\log
  v(n)\le\epsilon n+C$. We then estimate
  \begin{align*}
    v^+(n) &=\sum_{0\le m\le n}\sum_{0\le k\le \delta(n)}\sum_{n_1+\dots+n_k=m}v(n_1)\cdots v(n_k)\\
    &\le n\delta(n)\binom{n+\delta(n)-1}{\delta(n)}\max_{n_1+\dots+n_{\delta(n)}=n}v(n_1)\cdots v(n_{\delta(n)}).
  \end{align*}
  Let us show that the binomial \coefficient\
  $\binom{n+\delta(n)-1}{\delta(n)}$ is subexponential when $\delta$
  is sublinear. We use the following simple approximation for binomial
  \coefficient s, which comes from Stirling's formula for $n!$:
  \[\binom nk\approx\sqrt{\frac{2\pi n}{k(n-k)}} \left(\frac kn\right)^{-k}\left(\frac{n-k}n\right)^{k-n},\]
  in the sense that the quotient tends to $1$ as $n,k\to\infty$. In
  particular, if $k\le n/2$ then $\frac 1n\log\binom nk\le
  2\frac{-k}n\log(k/n)+\frac{-\log(2\pi)}{n}\log(k/n)\to0$ as
  $k/n\to0$. Therefore,
  \[\lim_{n\to\infty}\frac1n\log v^+(n)\le\lim_{n\to\infty}\frac1n\log\binom{n+\delta(n)-1}{\delta(n)}+
  \lim_{n\to\infty}\frac1n\big(\epsilon n+C\delta(n)\big)=\epsilon.\]
  Since this holds for all $\epsilon>0$, we have $\lim_{n\to\infty}\frac1n\log v^+(n)=0$.
\end{proof}

\begin{proof}[Proof of Proposition~\ref{prop:recurrenceimpliestriv}]
  We will show, for every $n$, that with positive probability a
  length-$n$ random walk lands in a subset of $W$ of subexponential
  size in $n$.

  Since $\mu$ is finitely supported, there exists a finite set
  $Y\subseteq X$ and a finite set $S\subset A$, which we may assume is
  generating, such that $\supp(\mu)\subseteq\sum_YS\times G$; namely,
  the random walk modifies only positions in $Y$, and does at most a
  step in $S$ at these positions.  Let $\delta(n)$ be the expectation
  of the inverted orbit growth of $(G,X)$, starting at all positions
  in $Y$. By assumption, $\delta$ grows sublinearly.

  We restrict ourselves to length-$n$ trajectories $\Omega_n$ whose
  inverted orbit visits less than $2\delta(n)$ points. These describe
  a subset of trajectories of measure at least $1/2$: indeed,
  $\expect[\delta(w)]=\delta(n)\le(1-\mu(\Omega_n))2\delta(n)$ whence
  $\mu(\Omega_n)\ge\frac12$.

  Let $\boldsymbol w=w_1\dots w_n\in\Omega_n$ be a
  trajectory. Considering simultaneously all $y\in Y$, the inverted
  orbit of $\boldsymbol w$ visits (a subset of) $\mathcal O=\{y
  w_{i(1)}\cdots w_n,\dots,y w_{i(k)}\cdots w_n\colon y\in
  Y\}\subseteq X$, say for definiteness at lexicographically minimal
  times $i(1),\dots,i(k)$; and $k\le 2\delta(n)$. This inverted orbit
  is determined by the sequence of group elements
  \[(w_{i(1)}\cdots w_{i(2)-1},w_{i(2)}\cdots
  w_{i(3)-1},\dots,w_{i(k)}\cdots w_n)\in G^k.
  \]
  By Lemma~\ref{lem:subexp+}, there is a subexponential number
  $v^+(n)$ of possibilities for $\mathcal O$ that may occur.

  Once a subset $\mathcal O$ of $X$ is chosen, let us consider the
  endpoint $\boldsymbol w=(f,g)$ of the trajectory, with $g\in G$ and
  $f\in\sum_XA$. The support of $f$ is contained in $\mathcal O$, and
  the random walk did a total of at most $|Y|n$ steps at positions in
  $\mathcal O$. Let $u(n)$ denote the growth function of $A$, by
  assumption subexponential. Assume the random walk did $n_x$ steps at
  each $x\in\mathcal O$, with $\sum_{x\in\mathcal O}n_x\le|Y|n$. Then
  $f\in\prod_{x\in\mathcal O}B_A(n_x)$, which is a subset of $\sum_XA$
  of subexponential growth, again by Lemma~\ref{lem:subexp+}.

  Since a product of subexponential functions is again subexponential,
  $\boldsymbol w$ belongs to a set of subexponential growth, when
  $\mathcal O$ ranges over all possible inverse orbits of trajectories
  in $\Omega_n$.

  Finally, to estimate the asymptotic entropy of $\mu$, it suffices to
  compute it on a subset of trajectories of positive measure. Indeed,
  consider $\epsilon>0$ and subsets $\Theta_n\subset W^n$ with
  $\mu^n(\Theta_n)\ge\epsilon$. If $h(\mu)=h>0$, then
  $\lim\frac{-1}n\log\mu^{*n}(\boldsymbol w)=h$ for almost every
  trajectory $\boldsymbol w\in G^\infty$,
  by~\cite{kaimanovich-v:entropy}*{Theorem~2.1}; so
  \[-\sum_{\substack{g\in W\\g=g_1\dots g_n\\(g_i)\in\Theta_n}}\mu^{*n}(g)\log\mu^{*n}(g)=h\epsilon>0.
  \]
  It therefore suffices, as we have done, to show that the asymptotic
  entropy of $\mu$ vanishes on a subset of positive measure.
\end{proof}

The proposition implies that the symmetric finitely supported random
walk on $W$ from Example~\ref{ex:firstgrigorchuk} has trivial
boundary.  Indeed, any nearest neighbour random walk on $\Z_+^2$ or
$\Z^2$ is recurrent~\cite{baldi-l-p:recurrents}; this property depends
only on the graph, not the random walk, see the remark before
Corollary~\ref{cor:mainA}. Note also that a subgraph of a recurrent
graph is also recurrent (see again~\cite{baldi-l-p:recurrents},
or~\cite{woess:rw}*{Corollary~2.15}), so we need not worry whether the
random walk is degenerate or intransitive.

\begin{remark}\label{rem:inv-direct}
  Implicit in the application of Lemma~\ref{lem:subexp+} is the
  following function $v_i(n,k)$ that deserves further study: for a group
  $G$, with generating set $S$, acting on a set $X$ with basepoint
  $\rho$, write
  \begin{multline*}
    v_i(n,k) = \#\{Y\subset X\mid\#Y=k\\\text{ and $Y$ is the inverted
      orbit of an $S$-path of length $n$ starting at }\rho\}.
  \end{multline*}
  Indeed, the lemma was used to show that, if $G$ is a group of
  subexponential growth with sublinear inverted orbit growth
  $\delta(n)$, then $v_i(n,\delta(n))$ is subexponential.

  By comparison, consider the corresponding function for direct orbits:
  \begin{multline*}
    v_d(n,k) = \#\{Y\subset X\mid\#Y=k\\\text{ and $Y$ is the direct
    orbit of an $S$-path of length $n$ starting at }\rho\}.
  \end{multline*}
  Each directed orbit $Y$ is a connected subset of $X$ containing
  $\rho$; and a connected subset of cardinality $k$ can be traversed
  by a path of length $2k$, so we have the simple bound
  $v_d(n,k)\le(\#S)^{2k}$, which implies that $v_d(n,k)$ is
  subexponential in $n$ as soon as $k$ is sublinear in $n$.

  In contrast with the direct orbit case, it is not possible in
  general to bound $v_i(n,k)$ by a function of $k$ only. For example,
  consider the first Grigorchuk group $G$ acting on a ray $X$. The
  stabilizer of $\rho$ is infinite; let $S$ contain the generating set
  of an infinite subgroup of it. If $s_2,\dots,s_n$ fix $\rho$ but
  $s_1$ does not, then the inverted orbit of $s_1\dots s_n$ contains
  only two points $\{\rho,\rho'\}$, and $\rho'$ is arbitrary under the
  condition $d(\rho,\rho')\le n$; so $v_i(n,2)\sim n$.

  More generally, we have the obvious bound $v_i(n,k)\le
  \#B_X(n)^k$. This bound is never tight enough for our purposes.
%
%
\end{remark}

We now show that in this example (or, more generally, in any torsion
Grigorchuk group) the assumption that the random walk is symmetric can
be dropped, see Corollary~\ref{cor:mainA}.

\subsection{Centered Markov chains}
There is a class of non-symmetric (and not necessarily reversible)
Markov chains that resembles in many aspects symmetric ones. These are
chains that admit a certain ``decomposition into cycles'',
see~\cite{kalpazidou:book}.  In particular, it is shown by Kalpazidou
in~\cite{kalpazidou:onmultiple} that under some conditions the
recurrence of such random walks does not depend on the choice of the
random walk. We will use a version of this statement which is due to
Mathieu.

\begin{definition}[\cite{mathieu:carnevaropoulos}*{Definition 2.1}]
  Let $V$ be an oriented graph, possibly with loops and multiple
  edges. A {\it centered Markov chain} on $V$ is defined as
  follows. There is a collection of $\{\gamma_i\}$ of oriented cycles
  on $V$, which we assume edge-self-avoiding but not necessarily
  vertex-self-avoiding. Each cycle has a weight $q_i$. Each edge must
  belong to exactly one cycle (but remember, we allow multiple
  edges!).  For any vertex $x$ in $V$, the sum of the weights of all
  cycles passing through $x$ (counted with multiplicity, if the cycle
  passes several times through $x$) is equal to one.
\end{definition}

The Markov chain has the vertex set of $V$ as set of states. The
probability of moving in one step of the Markov chain from vertex $x$
to vertex $y$ is given as follows: choose a cycle containing $x$
according to the weights $q_i$; then move to the successor of $x$
along that cycle. We write the transition kernel as follows:
\[q(x,y)=\sum_{i:(x,y)\in\gamma_i}q_i.\]

(The definition above is a particular case of
\cite{mathieu:carnevaropoulos}*{Definition~2.1}, and is slightly more
general than \cite{mathieu:carnevaropoulos}*{Example~2.4}. Indeed,
observe that under our assumption $q_i\le 1$ and, in the notation
of~\cite{mathieu:carnevaropoulos}, we can consider $m(x)=1$ for all
$x\in V$).

Centered Markov chains are generalizations of symmetric Markov chains:
indeed, in any non-oriented graph, replace each edge by two oriented
edges that form a cycle of length two; set the weight of that cycle to
be the weight of the original edge. In fact, the general definition
of centered Markov chains
in~\cite{mathieu:carnevaropoulos}*{Definition~2.1} is a
generalization of reversible Markov chains.

\begin{remark} 
  (i) If $\mu$ is a finitely supported measure on a group $G$ and all
  elements of the support of $\mu$ are torsion, then the random walk
  $(G,\mu)$ is a centered Markov chain on the Cayley graph of $G$ with
  generating set $\supp(\mu)$. This is used
  in~\cite{mathieu:carnevaropoulos} to prove Carne-Varopoulos
  estimates for random walks on torsion groups.

  (ii) More generally, if $\mu$ is a finitely supported measure on a
  group $G$ and all elements of the support of $\mu$ are torsion, and
  $G$ acts on a set $X$, then the random walk on $X$ is a centered
  random walk on the Schreier graph of $(G,X)$.
\end{remark}
\begin{proof}
  For each $g \in \supp(\mu)$ there exists a minimal $m\ge 1$ such that
  $g^m=1$. For each such $g$, consider all the cycles of the form $(x,
  x g, \dots x g^{m-1})$, and define the weight of this cycle to be
  $\mu(g)$. The random walk on $X$ induced by the measure $\mu$ is the
  same as the centered Markov chain defined by these weighted cycles.
\end{proof}

\begin{lemma}[Mathieu, \cite{mathieu:carnevaropoulos}*{Proposition 2.13(iii)}]\label{lem:mathieu}
  Let $V$ be a connected locally finite graph, and let $q$ be a
  centered Markov chain on $V$. Let $q_0$ be the associated symmetric
  Markov chain: $q_0(x,y)=\frac12(q(x,y)+q(y,x))$.

  Then the chain $q$ is recurrent if and only if $q_0$ is
  recurrent. 
\end{lemma}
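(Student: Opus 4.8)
The plan is to reduce the statement to a comparison of diagonal Green functions and to isolate exactly where the cycle structure is used. First I would record the structural consequences of the centered condition. Summing the weights of all cycles through a fixed vertex shows that the counting measure $m(x)\equiv1$ is stationary for $q$; as $q$ is already a Markov kernel, it is doubly stochastic, so the associated operator $Qf(x)=\sum_y q(x,y)f(y)$ is a contraction of $\ell^2(V)$ with adjoint $Q^*(x,y)=q(y,x)$. Thus $q_0$ is exactly the operator symmetrization $Q_0=\tfrac12(Q+Q^*)$, and the antisymmetric part $J:=\tfrac12(Q-Q^*)$ satisfies $\sum_y J(x,y)=\tfrac12\sum_y(q(x,y)-q(y,x))=0$, i.e. it is a \emph{divergence-free} kernel. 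This is precisely where being centered, rather than merely doubly stochastic, enters: $J$ is the superposition over the cycles $\gamma_i$ of the balanced circulations $\tfrac12 q_i(\mathbf 1[(x,y)\in\gamma_i]-\mathbf 1[(y,x)\in\gamma_i])$. Since recurrence of $q$ (resp. $q_0$) is equivalent to divergence of $\sum_n q^{(n)}(x_0,x_0)=\lim_{\lambda\uparrow1}\langle\delta_{x_0},(I-\lambda Q)^{-1}\delta_{x_0}\rangle$, everything reduces to comparing these two resolvents.

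For the easy direction I would use that $Q$ has real nonnegative entries. Fix $x_0$, put $g_\lambda=(I-\lambda Q)^{-1}\delta_{x_0}$ (a nonnegative real function), and test the defining equation against $g_\lambda$: since $\langle Qg_\lambda,g_\lambda\rangle=\langle Q_0 g_\lambda,g_\lambda\rangle$ for real $g_\lambda$, one obtains the identity $g_\lambda(x_0)=\langle(I-\lambda Q_0)g_\lambda,g_\lambda\rangle$. Writing $G_Q(\lambda)=g_\lambda(x_0)$ and invoking the elementary variational formula $G_{Q_0}(\lambda)=\max_f\{2f(x_0)-\langle(I-\lambda Q_0)f,f\rangle\}$ for the symmetric resolvent, the identity rearranges to $G_Q(\lambda)=2g_\lambda(x_0)-\langle(I-\lambda Q_0)g_\lambda,g_\lambda\rangle\le G_{Q_0}(\lambda)$, so $q$ recurrent $\Rightarrow$ $q_0$ recurrent. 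Taking $\lambda\uparrow1$ in the transient case, the same identity yields the useful fact that $g:=G(\cdot,x_0)$ has finite $q_0$-Dirichlet energy $\mathcal E_0(g,g)=\tfrac12\sum_{x,y}q_0(x,y)(g(x)-g(y))^2=G(x_0,x_0)<\infty$.

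The hard direction is $q$ transient $\Rightarrow$ $q_0$ transient, and here I would build a finite-energy flow for $q_0$. Using $v:=G(x_0,\cdot)$, which solves $(I-Q^*)v=\delta_{x_0}$, the net current $\theta(x,y)=v(x)q(x,y)-v(y)q(y,x)$ is antisymmetric with divergence $\delta_{x_0}$, i.e. a unit flow from $x_0$ to infinity; by the flow criterion for reversible networks it suffices to bound its $q_0$-energy $\tfrac12\sum_{x,y}\theta(x,y)^2/q_0(x,y)$. Splitting $\theta(x,y)=q_0(x,y)(v(x)-v(y))+J(x,y)(v(x)+v(y))$, the symmetric part contributes at most $4\mathcal E_0(v,v)=4G(x_0,x_0)<\infty$, so the entire difficulty concentrates on the circulation term $\sum_{x,y}\tfrac{J(x,y)^2}{q_0(x,y)}(v(x)+v(y))^2$.

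The crude bound $J^2/q_0\le q_0$ is too lossy — it already fails on $\Z^3$ — and this is the main obstacle I expect: one must use that $J$ is a superposition of \emph{balanced} cycle-flows to control its energy against $\mathcal E_0$, equivalently to verify a sector condition $|\langle Jf,g\rangle|\le K\,\mathcal E_0(f,f)^{1/2}\mathcal E_0(g,g)^{1/2}$. This is exactly the point where the cycle decomposition of a centered chain is indispensable — the biased walk on $\Z$, doubly stochastic but not centered, violates both the conclusion and this sector bound — and it is the step I anticipate will require the full strength of Kalpazidou's and Mathieu's cycle machinery. Once the sector condition is established, the circulation term is dominated by $\mathcal E_0(v,v)<\infty$, the flow $\theta$ has finite energy, and transience of $q_0$ follows, completing the equivalence.
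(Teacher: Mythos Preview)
The paper does not prove this lemma at all: it is quoted as Mathieu's Proposition~2.13(iii) and used as a black box. So there is no in-paper argument to compare against; what follows is an assessment of your attempt on its own terms.

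Your easy direction ($q$ recurrent $\Rightarrow q_0$ recurrent) is correct and nicely done: the identity $G_Q(\lambda)=\langle(I-\lambda Q_0)g_\lambda,g_\lambda\rangle$ for the real function $g_\lambda=(I-\lambda Q)^{-1}\delta_{x_0}$, combined with the variational formula for the symmetric resolvent, does give $G_Q(\lambda)\le G_{Q_0}(\lambda)$.

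The hard direction, however, is not a proof but a reduction to an unproved hypothesis. You correctly locate the obstruction in the circulation term $\sum_{x,y}J(x,y)^2(v(x)+v(y))^2/q_0(x,y)$ and correctly note that the crude bound $|J|\le q_0$ is useless (it yields $\sum q_0(x,y)(v(x)+v(y))^2\asymp\|v\|_2^2$, divergent already on $\Z^3$). But your remedy --- a sector condition $|\langle Jf,g\rangle|\le K\,\mathcal E_0(f,f)^{1/2}\mathcal E_0(g,g)^{1/2}$ --- is simply asserted, with the justification that it ``will require the full strength of Kalpazidou's and Mathieu's cycle machinery''. That is exactly the substance of the lemma, and you have not supplied it. Two further issues: (a) for centered chains with cycles of \emph{unbounded} length the sector constant one naturally obtains scales with the cycle length, so it is not clear a uniform $K$ exists in the generality stated; (b) even granting the sector condition, it bounds bilinear pairings $\langle Jf,g\rangle$, not the weighted $\ell^2$-norm $\sum J^2(v_x+v_y)^2/q_0$ that you need --- an additional duality or variational step is required, and you have not indicated it. Your last sentence (``the circulation term is dominated by $\mathcal E_0(v,v)$'') therefore does not follow from what precedes it. In short: the outline isolates the right difficulty and dispatches the trivial half, but the substantive half is left open.
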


Recall that a random walk is \emph{uniformly irreducible} if its
one-step transition probabilities are uniformly bounded from below.
It is well known (see e.g.~\cite{woess:rw}*{Corollary~3.5}) that if
$V^0$ is a non-oriented graph, the recurrence/transience of uniformly
irreducible symmetric random walks on $V^0$ does not depend on the
probability measure, but only on the graph. Thus, by the lemma, if $V$
is recurrent considered as a non-oriented graph, then all centered
Markov chains on $X$ are recurrent.

The following example (statements~(i) and~(ii) of the corollary below)
gives a negative answer to the question of Kaimanovich and Vershik
from~\cite{kaimanovich-v:entropy}:

\begin{corollary}[= Theorem~\ref{thm:mainA}]\label{cor:mainA}
  Let $G$ be any Grigorchuk torsion group, for example $\Grig$, and
  let $X$ be an orbit of $G$ on the tree's boundary. Let $A$ be a
  non-trivial finite group, and set $W=A\wr_{X\times X}(G\times
  G)$. Then
  \begin{enumerate}
  \item[(i)] $W$ has exponential word growth;
  \item[(ii)] any finitely supported random walk on $W$ has trivial
    Poisson-Furstenberg boundary;
  \item[(iii)] any finitely supported random walk on $W$ has zero drift.
  \end{enumerate}
\end{corollary}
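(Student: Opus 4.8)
The plan is to prove the three statements of Corollary~\ref{cor:mainA} by assembling the machinery developed earlier in the paper, treating each part with its appropriate tool.

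For part~(i), the exponential word growth, I would verify that the hypotheses of Corollary~\ref{cor:mainB} (equivalently Theorem~\ref{thm:expwordgrowth}) are met. The group $G$ is a Grigorchuk torsion group, acting transitively on an infinite orbit $X$ of the boundary of the rooted tree; since $G$ is a torsion group of subexponential (indeed intermediate) growth, a finitely supported symmetric nondegenerate measure $\mu_i$ on $G$ has drift bounded by $Dn^\alpha$ for some $\alpha<1$. For the specific group $\Grig$ this follows from Remark~\ref{growthanddrift} combined with the growth bound $\exp(n^\beta)$, giving $\alpha=(1+\beta)/2<1$; for a general torsion Grigorchuk group whose growth may not be dominated by $\exp(n^\alpha)$, I would instead invoke Corollary~\ref{driftgrigorchuk} (referenced at the end of Section~\ref{sec:ntboundary}), which supplies a finitely supported walk with drift exponent $\alpha=3/4$. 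With two copies, $G=G\times G$ acting on $X=X\times X$, the product return probability estimate $\mu^{*n}(\stab_G(\rho))\le C/n$ holds (each factor contributing $n^{-1/2}$), so $\delta=1>\alpha$, and Theorem~\ref{thm:expwordgrowth} yields a symmetric finite-first-moment measure with non-trivial boundary, whence $W$ has exponential growth.

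For part~(ii), triviality of the boundary for every finitely supported measure, I would apply Proposition~\ref{prop:recurrenceimpliestriv}. Here $A$ is finite (hence of subexponential, in fact bounded, growth) and $G\times G$ is a torsion group of subexponential growth, so both growth hypotheses are satisfied. The crucial input is that the projected random walk on $X\times X$ has \emph{sublinear} expected inverted orbit growth. By Lemma~\ref{lem:expectedinvertedorbit}, this is equivalent to recurrence of the induced walk on $X\times X$. Now the Schreier graph of $(G,X)$ for a Grigorchuk group is a subgraph of $\Z_+$ or $\Z$ (a ray or line), so the product Schreier graph embeds in $\Z^2$ or $\Z_+^2$, which is recurrent by~\cite{baldi-l-p:recurrents}; recurrence of uniformly irreducible symmetric walks depends only on the graph, and passes to subgraphs. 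Since $G$ is torsion, \emph{every} finitely supported $\mu$ on $W$ projects to a centered Markov chain on the Schreier graph (by the Remark on centered chains); by Lemma~\ref{lem:mathieu} (Mathieu), recurrence of a centered chain is equivalent to recurrence of its symmetrization $q_0$, which holds because the underlying graph is recurrent. Hence the projected walk is recurrent for all finitely supported $\mu$, the inverted orbit growth is sublinear, and Proposition~\ref{prop:recurrenceimpliestriv} gives $h(\mu)=0$ and trivial boundary.

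For part~(iii), zero drift, I would deduce it from part~(ii) together with the entropy criterion. The subtlety is that $\mu$ need not be symmetric, so I cannot directly invoke Karlsson-Ledrappier. Instead, the cleanest route is to observe that the asymptotic entropy vanishes: the proof of Proposition~\ref{prop:recurrenceimpliestriv} actually shows that the length-$n$ walk lands, with probability bounded below, in a subset of $W$ of subexponential cardinality, which forces $h(\mu)=0$ by the Shannon--McMillan--Breiman argument used there. One then needs to pass from zero entropy to zero drift. The main obstacle is precisely this implication, since in general $h(\mu)=0$ does not force $\ell(\mu)=0$ for non-symmetric walks. I expect the argument to use the specific structure of $W$: the projection of $\mu$ to $G\times G$ lives on a torsion group of subexponential growth and has recurrent induced action, so the $G\times G$-component of the walk has sublinear displacement; meanwhile the $\sum_X A$-component is controlled because $A$ is finite and the support of the lamp configuration is confined to the sublinearly-growing inverted orbit, giving a word-length bound of order $o(n)$ for the whole element. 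Quantifying this displacement bound — showing $\|w_1\cdots w_n\|_S = o(n)$ almost surely by combining the sublinear inverted orbit size with the bounded generating cost of each lamp — is the step I expect to require the most care.
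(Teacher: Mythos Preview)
Your treatment of parts~(i) and~(ii) matches the paper's proof essentially line for line: Theorem~\ref{thm:expwordgrowth} (with Corollary~\ref{driftgrigorchuk} supplying the drift bound for general torsion Grigorchuk groups) gives~(i), and the chain ``torsion $\Rightarrow$ centered Markov chain on a subgraph of $\Z^2$ $\Rightarrow$ recurrent by Lemma~\ref{lem:mathieu} $\Rightarrow$ sublinear inverted orbit by Lemma~\ref{lem:expectedinvertedorbit} $\Rightarrow$ trivial boundary by Proposition~\ref{prop:recurrenceimpliestriv}'' is exactly the paper's route for~(ii).

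Part~(iii), however, has a genuine gap. You correctly obtain $h(\mu)=0$ from~(ii) and the entropy criterion, and you correctly flag that Karlsson--Ledrappier does not apply because $\mu$ need not be symmetric. But your proposed workaround --- bounding $\|w_1\cdots w_n\|_S$ directly by combining sublinear inverted-orbit size with the finiteness of $A$ --- does not close. The word length of $(f_n,g_n)$ in $W$ is not $|\supp(f_n)|+\|g_n\|_G$; it is governed by the length of a path in $G$ (not in $X$) from $1$ to $g_n$ whose associated $X$-trajectory covers $\supp(f_n)$. Even if $|\supp(f_n)|=o(n)$ and $\|g_n\|_G=o(n)$, the cost of such a covering path is a travelling-salesman quantity that is not obviously $o(n)$: the points of $\supp(f_n)$ lie in a ball of radius $n$ in the Schreier graph, and there is no general bound of the form $o(n)$ for visiting $o(n)$ points spread over such a ball.

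The paper avoids this entirely by observing that $W$ itself is a torsion group (it is an extension of the locally finite group $\sum_{X\times X}A$ by the torsion group $G\times G$), so every finitely supported $\mu$ on $W$ defines a \emph{centered} random walk on $W$. Mathieu proves in~\cite{mathieu:carnevaropoulos} that Carne--Varopoulos estimates hold for centered walks, and in particular that $h(\mu)=0\iff\ell(\mu)=0$ for centered random walks on groups. You already invoked Mathieu's machinery for the recurrence step in~(ii); the point you are missing is that the same machinery applies once more, now to the walk on $W$ rather than on $X\times X$, and gives~(iii) immediately from $h(\mu)=0$.
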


\begin{proof} 
  (i) follows from Theorem~\ref{thm:expwordgrowth}; see its
  application in Example~\ref{ex:firstgrigorchuk}.

  For (ii), take a finitely supported measure $\mu$ on $G\times G$ and
  consider the induced random walk on $X^2$. Since $W$ is a torsion
  group, the random walk on $X^2$ is a centered Markov
  chain. By~\cite{bartholdi-g:spectrum}*{\S5.1}, the graph $X$ is
  $\Z_+$ or $\Z$, so the random walk on $X^2$ is a centered Markov
  chain on a graph which is a subgraph of $\Z^2$. By
  Lemma~\ref{lem:mathieu}, this random walk is recurrent.  Therefore,
  we can apply Proposition~\ref{prop:recurrenceimpliestriv}, and
  conclude that the random walk $(W,\mu)$ has trivial boundary. See
  Figure~\ref{fig:schreier} for a portion of the Schreier graph
  $X\times X$.

  For (iii), take a finitely supported measure $\mu$ on $W$. Any
  finitely supported measure has finite entropy. We have shown in~(ii)
  that the random walk $(W,\mu)$ has trivial boundary. Therefore, by
  the entropy criterion, $h(\mu)=0$.

  Mathieu has proven in~\cite{mathieu:carnevaropoulos} that
  Carne-Varopoulos estimates hold for centered Markov chains. In
  particular, he has shown that, for centered random walks on groups,
  $h(\mu)=0$ if and only if $\ell(\mu)=0$. We conclude that
  $\ell(\mu)=0$.
\end{proof}

\section{Further examples. Drift estimates for self-similar random walks}\label{sec:grig}
Self-similar groups are groups $G$ endowed with a homomorphism
$\phi\colon G\to G\wr\sym_d$, with $\sym_d$ the symmetric group on
$\{1,\dots,d\}$. By iterating the map $\phi$, every self-similar group
acts on sets of cardinality $d^n$, for all $n\in\N$; these sets form
the levels of a $d$-regular rooted tree. If we write
$\phi(g)=\pair{g_1,\dots,g_d}\pi$, then the permutation $\pi\in\sym_d$
describes the action of $g$ on the neighbours of the root, while
$g_1,\dots,g_d$ describe recursively the action of $g$ on the subtrees
attached to the root.

A fundamental example is the first Grigorchuk group $\Grig$. It is the
self-similar group characterized as follows: it is generated by four
elements $a,b,c,d$; it acts faithfully on the $2$-regular rooted tree;
and $\phi$ is given on the generators by
\begin{equation}\label{eq:grigorchuk}
  \phi(a)=\pair{1,1}(1,2),\quad\phi(b)=\pair{a,c},\quad\phi(c)=\pair{a,d},\quad\phi(d)=\pair{1,b}.
\end{equation}

Self-similar random walks were introduced by the first author and
Virag in~\cite{bartholdi-v:amenability}; see below for the
definition. In that paper, they show that the so-called ``Basilica
group'' admits a self-similar random walk, and then this self-similar
measure is used to show that this random walk has zero drift with
respect to some metric (which is not a word metric, in contrast with
usual definition of the drift).

Kaimanovich uses a similar idea in~\cite{kaimanovich:munchhausen}, but
works with the entropy of the random walk $h(\mu)$ instead.  The main
idea of these papers is to use the self-similarity of the random walk
to prove that its asymptotic entropy vanishes. In a similar way one
can use self-similar measures in order to estimate $H_\mu(n)$, see
\cite{bartholdi-k-n-v:ba}*{Proposition 4.11}.  The following lemma
is similar to that proposition.

\begin{definition}
  A \emph{self-similar sequence of groups} is a sequence
  $(G_1,G_2,\dots)$ of groups, with homomorphisms $\phi_i\colon G_i\to
  G_{i+1}\wr\sym_d$.

  Let $\mu_i$ be a measure on $G_i$. It defines a random walk on
  $G_{i+1}\times\{1,\dots,d\}$, via $\phi_i$: if
  $\phi_i(g)=\pair{g_1,\dots,g_d}\pi$, then the walk moves from
  $(h,i)$ to $(h g_i,\pi(i))$ with probability $\mu_i(g)$. The
  \emph{renormalization} of $\mu_i$ is the measure $\mu'_i$ on
  $G_{i+1}$ defined by running $\mu_i$ on $(1,1)$ till it reaches
  $G_{i+1}\times1$; in formulas,
  \[\mu'_i(g)={\kern6mm\sum_{\kern-6mm h_1,\dots,h_n\in G_{i+1}\kern-6mm}}'\kern6mm\mu_i(h_1)\cdots\mu_i(h_n),\]
  where the sum extends over all $n$-tuples $(h_1,\dots,h_n)$ such
  that $\phi_i(h_1\cdots h_n)\in\{g\}\times
  G_{i+1}^{d-1}\times\stab_{\sym_d}(1)$ and $\phi_i(h_1\cdots h_j)\notin
  G_{i+1}^d\times\stab_{\sym_d}(1)$ for all $j<n$.

  A \emph{self-similar sequence of measures} on a sequence $(G_i)$ of
  groups is a sequence $(\mu_1,\mu_2,\dots)$ of measures, each $\mu_i$
  a measure on $G_i$, and a sequence of numbers
  $(\alpha_1,\alpha_2,\dots)$ in $[0,1]$, such that
  $\mu'_i=(1-\alpha_i)\delta_1+\alpha_i\mu_{i+1}$, namely $\mu'_i$ is
  a convex combination of $\mu_{i+1}$ and the Dirac measure at $1\in
  G_{i+1}$. It is a \emph{lazy} random walk, with \emph{laziness}
  $\alpha_i$.
\end{definition}

\noindent The following lemma
generalizes~\cite{bartholdi-k-n-v:ba}*{Proposition~4.11}; see also~\cites{brieussel:entropy,amir-virag:speedexponents}:
\begin{lemma}\label{lem:selfsimilar}
  Let $(G_i)$ be a self-similar sequence of groups, and let $(\mu_i)$
  be a self-similar sequence of measures on $(G_i)$, with laziness
  $(\alpha_i)$. Assume $\sup_i H(\mu_i)<\infty$. Then there exists a
  constant $K$ such that
  \[H_{G_1,\mu_1}(n)\le K n^{\beta}\text{ for all }n,\text{ with
  }\beta=\frac{\log d}{\log d-\log(\sup\alpha_i)}.\]
\end{lemma}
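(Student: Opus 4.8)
The plan is to run a self-similar renormalization that reduces the entropy of $\mu_1^{*n}$ at level $1$ to the entropy of the level-$2$ walk run for about $\alpha n/d$ steps in each of the $d$ subtrees, and then to close the resulting recursion. Since the self-similar action is faithful, $\phi_1$ is injective, so the endpoint $P_n=g^{(1)}\cdots g^{(n)}$ of the walk is determined by its image $\phi_1(P_n)=\pair{w_1,\dots,w_d}\sigma$. Writing $H(\mu_1^{*n})=H(w_1,\dots,w_d,\sigma)$ and using subadditivity of Shannon entropy together with $H(\sigma)\le\log(d!)$, I get
\[
H(\mu_1^{*n})\le\log(d!)+\sum_{j=1}^d H(w_j),
\]
so the task reduces to controlling each marginal $w_j\in G_2$.

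Next I would identify the law of $w_j$ through the renewal structure defining the renormalization. Let $\Sigma_t=\pi^{(1)}\cdots\pi^{(t)}\in\sym_d$ be the permutation accumulated after $t$ steps, and let $R_j=\#\{t\le n:\Sigma_t(j)=j\}$ count the returns of the $\sym_d$-walk to the stabilizer of $j$. Between two consecutive such returns the coordinate-$j$ entry picks up an independent increment with law $\mu'_1=(1-\alpha_1)\delta_1+\alpha_1\mu_2$, by the strong Markov property and the very definition of $\mu'_1$; hence, conditionally on $R_j$, the marginal $w_j$ is distributed as $\mu_2^{*M_j}$ with $M_j\sim\mathrm{Binomial}(R_j,\alpha_1)$. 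The entropy of a mixture is at most the entropy of the mixing index plus the mean entropy of the components, so $H(w_j)\le H(M_j)+\expect[H_{\mu_2}(M_j)]\le O(\log n)+\expect[H_{\mu_2}(M_j)]$. The conservation law I will exploit is $\sum_{j=1}^d R_j=\sum_{t=1}^n\#\mathrm{Fix}(\Sigma_t)$; when the projection of the walk to $\sym_d$ generates a transitive subgroup, $\expect[\#\mathrm{Fix}(\Sigma_t)]\to1$ by Burnside, so $\expect[\sum_j R_j]\le n(1+o(1))$ and therefore $\sum_j\expect[M_j]=\alpha_1\expect[\sum_j R_j]\le\alpha n(1+o(1))$, with $\alpha=\sup_i\alpha_i$.

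I would then close the recursion by induction on $n$, uniformly over the level, setting $\Phi(n)=\sup_i H_{\mu_i}(n)$ and assuming $\Phi(m)\le Km^{\beta}$ for $m<n$. Since $x\mapsto x^{\beta}$ is concave ($\beta<1$), Jensen applied to the randomness gives $\expect[H_{\mu_2}(M_j)]\le K(\expect M_j)^{\beta}$, and the power-mean inequality across the $d$ coordinates gives $\sum_j(\expect M_j)^{\beta}\le d^{1-\beta}\big(\sum_j\expect M_j\big)^{\beta}\le d^{1-\beta}(\alpha n)^{\beta}$. The choice of $\beta$ is exactly the one making the recursion critical: $\beta=\frac{\log d}{\log d-\log\alpha}$ is equivalent to $d^{1-\beta}\alpha^{\beta}=1$, so the main term reproduces $Kn^{\beta}$ exactly and $H(\mu_1^{*n})\le Kn^{\beta}+O(\log n)$. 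The residual $O(\log n)$ terms accumulate over the $O(\log n)$ levels of the recursion into an $O(\log^2 n)=o(n^{\beta})$ correction, which is absorbed by enlarging $K$ and adjusting on finitely many small $n$; the hypothesis $\sup_i H(\mu_i)<\infty$ is what keeps all additive constants, and the crude bound $H_{\mu_2}(m)\le m\sup_i H(\mu_i)$, uniform in the level.

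The main obstacle, and the point where care is genuinely needed, is the return-count conservation $\sum_j R_j\approx n$: it is what produces the gain factor $d^{1-\beta}$ (rather than a fatal $d$) and hence the stated exponent, and it rests on the $\sym_d$-projection mixing the $d$ coordinates. A secondary nuisance is the boundary case $M_j=n$ (all increments real and coordinate $j$ never leaving its stabilizer), where the inductive hypothesis is not yet available; this event has vanishing probability under transitivity and contributes at most $\proba[\cdot]\cdot n\sup_i H(\mu_i)=o(n^{\beta})$, so it is harmless. The remaining steps — subadditivity of entropy, the mixture bound, and the two applications of concavity — are routine.
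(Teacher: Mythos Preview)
Your overall strategy matches the paper's: pass to $G_{i+1}\wr\sym_d$ via $\phi_i$, control each coordinate through the renormalized measure, and iterate. But there is a genuine gap in how you close the recursion. You set up an induction (assume $\Phi(m)\le Km^\beta$ for $m<n$) and obtain $\Phi(n)\le Kn^\beta+O(\log n)$, which does not close; you then claim the residual $O(\log n)$ terms ``accumulate over the $O(\log n)$ levels of the recursion into an $O(\log^2 n)$ correction''. This is wrong: when you unroll, at depth $l$ there are $d^l$ subproblems, and each contributes its own additive error, so the accumulated error is a \emph{geometric} sum $\sum_l d^l\cdot O(\log)\sim O(d^k)=O(n^\beta)$, not an arithmetic $O(\log^2 n)$. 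The bound $Kn^\beta$ does survive---this $O(n^\beta)$ error is absorbed into the constant---but for a different reason than you give, and it cannot be obtained by the straight induction you describe; one must unroll all the way and invoke $\sup_i H(\mu_i)<\infty$ at the bottom.

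The paper sidesteps this by using, instead of the inductive target $Km^\beta$, the concavity of $m\mapsto H_\nu(m)$ itself (the increments $H_\nu(m{+}1)-H_\nu(m)$ decrease, by Kaimanovich--Vershik, so the piecewise-linear extension to $\R$ is concave). With $\sum_j n_j\le n$ this gives immediately $\expect\bigl[\sum_j H_{\mu'_i}(n_j)\bigr]\le d\,H_{\mu'_i}(n/d)$, and the lazy structure $\mu'_i=(1-\alpha_i)\delta_1+\alpha_i\mu_{i+1}$ plus concavity again give $H_{\mu'_i}(m)\le H_{\mu_{i+1}}(\alpha_i m)$. The result is a clean one-step recursion $H_{\mu_i}(n)\le d\,H_{\mu_{i+1}}(\alpha_i n/d)+d\log d$ with \emph{constant} additive error, which unrolls directly to $H_{\mu_1}(n)\le\sum_{l=1}^k d^l\log d+d^k\sup_i H(\mu_i)\le Kd^k\le Kn^\beta$. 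Note too that the paper works with the visit counts $n_j$ of a single position-walk, for which $\sum_j n_j=n$ holds trivially; your detour through $\sum_j R_j=\sum_t\#\mathrm{Fix}(\Sigma_t)$, Burnside's lemma, and an assumed transitivity of the $\sym_d$-projection is unnecessary and introduces a hypothesis not present in the lemma's statement.
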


\begin{proof} A random variable in $G_i$ is determined by its
  projection to $\sym_d$ and by its $d$ renormalizations in
  $G_{i+1}$. Say an $n$-step walk starting at $1$ visits $n_i$ times
  point $i$, for all $i\in\{1,\dots,d\}$. Then
  \[H_{\mu_i}(n) \le \expect[\sum_{j=1}^d  H_{\mu_i'}(n_i)\mid n_1+n_2+\dots +n_d \le n]+d\log d.
  \]
  For $\nu$ a measure, we extend $H_\nu(n)$ to real arguments $n\in\R$
  by interpolating linearly:
  $H_\nu((1-\theta)n+\theta(n+1))=(1-\theta)H_\mu(n)+\theta
  H_\mu(n+1)$. By~\cite{kaimanovich-v:entropy}*{Proposition~1.3}, for
  $n\in\N$ the numbers $H_\nu(n+1)-H_\nu(n)$ decrease monotonically to
  $h(\nu)$; so the affine extension $H_\nu\colon \R\to\R$ is a concave
  function. Therefore,
  \[H_{\mu_i}(n) \le d H_{\mu'_i}(n/d)+d\log d.\]
  Next, for all $m\in\N$,
  \[H_{\mu'_i}(m)=\sum_{k=0}^m\binom mk\alpha_i^k(1-\alpha_i)^{m-k}H_{\mu_{i+1}}(k);\]
  the binomial distribution has mean $\alpha_i m$, so again by concavity
  \[H_{\mu'_i}(m)\le H_{\mu_{i+1}}(\alpha_i m).\]
  Therefore,
  \[H_{\mu_i}(n)\le d H_{\mu_{i+1}}(n\alpha_i/d)+d\log d.\]
  We then iterate this relation, to obtain
  \[H_{\mu_1}(d^k/\alpha_1\cdots\alpha_k)\le d\log d+\cdots+d^k\log d+d^k H(\mu_{k+1})\le K d^k\]
  for a constant $K$, and we are done.
\end{proof}

For any finitely supported random walk on a finitely generated group
$G$, there exist constants $C,D>0$ such that
\begin{equation}\label{eq:drift}
  C\left(\frac{L(n)}{n}\right)^2 \le \frac{H(n)}{n} \le D\frac{L(n)}{n}
\end{equation}
for all $n$; the first inequality follows from Varopoulos's long range
estimates, see e.g.~\cite{erschler:drift}*{page~1201}. These
inequalities hold, more generally, for any random walk with finite
second moment, see~\cite{erschler-karlsson:homo}*{Corollary~9.(ii)}.

Kaimanovich observes in~\cite{kaimanovich:munchhausen} that the first
Grigorchuk group admits a self-similar measure $\mu$ with laziness
$1/2$.  An example of such a measure is $\mu$ defined by
$\mu(1)=5/12$, $\mu(a)=1/3$, $\mu(b)=\mu(c)=\mu(d)=1/12$: for this
measure one has $\mu'=1/2 \delta_1 + 1/2 \mu$.  We can therefore apply
Lemma~\ref{lem:selfsimilar} with $d=2$ and $\alpha=1/2$ and conclude
that the entropy function $H_\mu(n)$ of this random walk satisfies
$H(n) \le K n^{1/2}$.

Now take a sequence $\omega =
(v_1,v_2,\dots)\in\{\mathbf0,\mathbf1,\mathbf2\}^\infty$, and define
$\omega_i = (v_i, v_{i+1}, \dots)$ its shift; consider the
corresponding sequence of Grigorchuk groups $G_i=G_{\omega_i}$, which
form a similar sequence.  The standard generators of $G_i$ are still
written $a,b,c,d$. On each $G_i$ define a probability measure $\mu_i$
by $\mu_i(1)=5/12$, $\mu_i(a)=1/3$,
$\mu_i(b)=\mu_i(c)=\mu_i(d)=1/12$. These form a self-similar sequence
of measures on $(G_i)$, and, as in~\cite{kaimanovich:munchhausen}, one
has $\mu_i' = 1/2 \delta_1 +1/2 \mu_{i+1}$. Combining this with
Lemma~\ref{lem:selfsimilar} and~\eqref{eq:drift}, we get the
\begin{corollary}\label{driftgrigorchuk}
  On every Grigorchuk group $G_\omega$, there exists a symmetric
  non-degenerate finitely supported measure $\mu$ and a constant $C$
  such that $H(n)\le C n^{1/2}$ and $L(n) \le C n^{3/4}$ for all
  $n\in\N$.
\end{corollary}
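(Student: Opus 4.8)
The plan is to combine Lemma~\ref{lem:selfsimilar} with the general drift/entropy inequalities~\eqref{eq:drift}, exactly as set up in the two paragraphs preceding the corollary. First I would fix, on each shifted Grigorchuk group $G_i=G_{\omega_i}$, the explicit measure $\mu_i$ given by $\mu_i(1)=5/12$, $\mu_i(a)=1/3$, $\mu_i(b)=\mu_i(c)=\mu_i(d)=1/12$. This measure is symmetric (every generator $a,b,c,d$ is an involution), non-degenerate (its support contains the standard generating set), and finitely supported, so it meets all the adjectives claimed in the statement. The crucial arithmetic input, due to Kaimanovich, is that the renormalization satisfies $\mu_i'=\tfrac12\delta_1+\tfrac12\mu_{i+1}$; I would verify this is the self-similarity relation required in the definition of a self-similar sequence of measures, with laziness $\alpha_i=1/2$ constant and branching degree $d=2$.

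Next I would check the hypothesis $\sup_i H(\mu_i)<\infty$ of Lemma~\ref{lem:selfsimilar}: since each $\mu_i$ is supported on the same finite set with the same numerical weights, the entropies $H(\mu_i)$ are all equal to one fixed finite number, so the supremum is trivially finite. Applying Lemma~\ref{lem:selfsimilar} with $d=2$ and $\sup\alpha_i=1/2$ gives the exponent
\[
\beta=\frac{\log d}{\log d-\log(\sup\alpha_i)}=\frac{\log 2}{\log 2-\log(1/2)}=\frac{\log 2}{2\log 2}=\frac12,
\]
whence $H(n)=H_{G_1,\mu_1}(n)\le K n^{1/2}$ for a constant $K$. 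Renaming the constant, this is the first claimed bound $H(n)\le Cn^{1/2}$, valid for the random walk $(G_\omega,\mu)$ where $G_\omega=G_1$ and $\mu=\mu_1$.

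Finally I would feed this entropy bound into the right-hand inequality of~\eqref{eq:drift}. That inequality, however, bounds $H(n)$ above in terms of $L(n)$, which is the wrong direction for deducing a drift bound from an entropy bound; so the step I expect to require the most care is extracting an upper bound on $L(n)$ from the upper bound on $H(n)$. The correct tool is the \emph{left} inequality $C(L(n)/n)^2\le H(n)/n$, i.e.\ $L(n)^2\le C^{-1}\,n\,H(n)$, which comes from Varopoulos's long-range estimates. Substituting $H(n)\le Kn^{1/2}$ yields $L(n)^2\le C^{-1}K\,n^{3/2}$, hence $L(n)\le C'n^{3/4}$ for a suitable constant $C'$. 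Absorbing all constants into a single $C$ gives both $H(n)\le Cn^{1/2}$ and $L(n)\le Cn^{3/4}$ for all $n$, as required. The only genuine subtlety is making sure~\eqref{eq:drift} applies here: it is stated for finitely supported random walks on finitely generated groups, and our $\mu$ is exactly of this form, so no finite-second-moment caveat is needed and the argument goes through directly.
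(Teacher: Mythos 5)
Your proposal is correct and follows essentially the same route as the paper: the paper likewise takes Kaimanovich's measure $\mu_i(1)=5/12$, $\mu_i(a)=1/3$, $\mu_i(b)=\mu_i(c)=\mu_i(d)=1/12$ on each shifted group $G_{\omega_i}$, applies Lemma~\ref{lem:selfsimilar} with $d=2$ and laziness $1/2$ to get $H(n)\le Kn^{1/2}$, and then combines this with~\eqref{eq:drift} to get $L(n)\le Cn^{3/4}$. Your explicit observation that the right-hand inequality of~\eqref{eq:drift} goes the wrong way and that one must instead use the left-hand (Varopoulos) inequality $C\bigl(L(n)/n\bigr)^2\le H(n)/n$ is precisely the intended content of the paper's terse ``combining'' step, so nothing is missing.
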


\begin{remark}
  Examples of Grigorchuk groups above stress the importance of the
  fact that~\cite{bartholdi-v:amenability} works with drift with
  respect to a special non-word metric,
  and~\cite{kaimanovich:munchhausen} works with entropy of random
  walks, and not with drift: although Grigorchuk groups admit
  self-similar measure sequences with laziness $1/2$, it is not true
  that on these groups one has $L(n) \le C n^{1/2}$. Indeed, it is
  shown in~\cite{erschler:critical}*{Corollary~1} that any simple
  random walk on the first Grigorchuk group satisfies $L(n) \ge
  n^\kappa$ for some $\kappa>1/2$ and infinitely many $n$'s.
\end{remark}

\begin{example}\label{ex:grigorchukgeneral}
  Let $G_1,G_2$ be two Grigorchuk groups.  Let respectively $X_1,X_2$
  be orbits for their action on the boundary of the rooted tree.  By
  Corollary~\ref{driftgrigorchuk}, the assumption of
  Theorem~\ref{thm:expwordgrowth} is satisfied.  Therefore, for any non-trivial
  group $A$, the wreath product $W= A \wr_{X_1\times X_2} G_1\times
  G_2$ has exponential word growth.

  If $G_1$ and $G_2$ are torsion groups, then every finitely supported
  measure on $W$ has trivial boundary, so these are other negative
  answers to the Kaimanovich-Vershik question.
\end{example}

\begin{example}\label{ex:grigorchuktf}
  Let $G_1=G_2=H$ be the Grigorchuk torsion-free group of
  subexponential growth from~\cite{grigorchuk:pgps}; recall that
  $H$ maps onto $\Grig$, and therefore acts on an orbit $X$ of the
  Grigorchuk group on the boundary of the rooted tree. Consider the
  wreath product $W=\Z\wr_{X\times X}(H\times H)$. Then $W$ is a
  torsion-free group of exponential growth, such that every finitely
  supported measure on $W$ has trivial Poisson-Furstenberg boundary.
\end{example}
\begin{proof}
  Clearly $W$ is torsion-free, as an extension of torsion-free
  groups. Since the action of $H\times H$ on $X\times X$ actually
  comes from the action of $\Grig\times\Grig$, the random walk $\mu$
  on $X\times X$ induced by $H\times H$ is the same as a random walk
  induced by a measure on $\Grig\times\Grig$. Therefore, $\mu$ defines
  a centered random walk on a subgraph of $\Z^2$.  Applying
  Lemma~\ref{lem:mathieu} as we did in the proof of
  Corollary~\ref{cor:mainA}(ii), we conclude that $\mu$ induces a
  recurrent random walk. By Lemma~\ref{lem:expectedinvertedorbit}, the
  expected inverted orbit growth is sublinear. Since both $\Z$ and
  $H\times H$ have subexponential growth,
  Proposition~\ref{prop:recurrenceimpliestriv} gives that every
  finitely supported measure on $W$ has trivial boundary. On the other
  hand, $W$ has exponential word growth since, by
  Theorem~\ref{thm:expwordgrowth}, its quotient $\Z\wr_{X\times
    X}(\Grig\times\Grig)$ already has exponential growth.
\end{proof}

\section{Lipschitz imbeddings of regular trees}
We gave, in Theorem~\ref{thm:expwordgrowth}, a general criterion for a
permutational wreath product of a product of two groups to have
exponential word growth. For most of the examples we produce, it does
not seem at all straightforward to check without using random walks
that they have exponential growth.

Below is one example in which we prove more directly that the growth of an
inverted orbit of $(G,X)$ is linear (and hence that the word growth of the
corresponding wreath product $A\wr_X G$ is exponential). We consider
$G=\Grig$, acting diagonally on $X=X_1\times X_2$, where $X_1$ and $X_2$
are orbits under $\Grig$ of the rays $\rho_1 = (12)^\infty$ and $\rho_2=
(21)^\infty$ respectively (regarded as points of the boundary of the rooted
tree $\{1,2\}^*$ on which $\Grig$ acts).

\begin{proposition}\label{prop:directproof}  
  Let $w_n$ be the word over $\{a,b,c,d\}$ of length $\sim (2/\eta)^n$
  constructed as follows.  Write $\Omega'=\{ab,ac,ad\}^*\subset
  \Omega=\{a,b,c,d\}^*$, consider the substitution $\zeta\colon\Omega'\to
  \Omega'$ given by
  \[\zeta:ab\mapsto abadac,\quad ac\mapsto abab,\quad ad\mapsto acac,\]
  and consider the word $w_n=\zeta^n(ad)$.

  For a word $w=g_1\dots g_\ell$ define
  $\delta(w) = \#\{(\rho_1, \rho_2)g_i\cdots g_\ell\mid 0\le
  i\le\ell\}$ with $\rho_1 = (12)^\infty$ and $\rho_2 =
  (21)^\infty$. Then for all $n\ge1$ we have $\delta(w_n)=|w_n|+1$;
  namely, all points on the inverse orbit of $w_n$ are distinct.
\end{proposition}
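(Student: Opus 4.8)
The plan is to induct on $n$, using the self-similarity of $\Grig$ to descend one level of the rooted tree at each step. The first task is to record how the three blocks act. Writing a boundary point as $\epsilon x$ with $\epsilon\in\{0,1\}$ and using $\phi(a)=\pair{1,1}(1,2)$, $\phi(b)=\pair{a,c}$, $\phi(c)=\pair{a,d}$, $\phi(d)=\pair{1,b}$, one computes that each of $ab,ac,ad$ \emph{exchanges the two subtrees} (flips the root letter) and then acts on the entered subtree by a single generator:
\[(0x)(ab)=1(xc),\quad (0x)(ac)=1(xd),\quad (0x)(ad)=1(xb),\]
\[(1x)(ab)=0(xa),\quad (1x)(ac)=0(xa),\quad (1x)(ad)=0x.\]
I would also note that the basepoint is a pair of \emph{complementary} rays, $\rho_1=0\rho_2$ and $\rho_2=1\rho_1$, so the two coordinates always sit in opposite subtrees; this complementarity is the whole reason $X=X_1\times X_2$ (rather than a single orbit) produces maximal inverted orbits.

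The engine is the resulting descent formula. For any word $v=h_1\cdots h_\ell$ over $\{ab,ac,ad\}$ one gets $(\epsilon x)v=\epsilon'\,(x\,\Phi_\epsilon(v))$, where $\epsilon'=\epsilon+\ell\bmod 2$ and $\Phi_\epsilon(v)=h_1|_\epsilon\,h_2|_{\epsilon+1}\cdots h_\ell|_{\epsilon+\ell-1}$ is the word in $\{a,b,c,d\}$ assembled from the sections above with alternating parity. The point of the substitution $\zeta$ is that these section words reproduce $w_n$ one level down: for instance $\Phi_1(\zeta(ac))=ac$ and $\Phi_1(\zeta(ad))=ad$, while $\Phi_0(\zeta(ab))=cd=b$. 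Consequently, applying a suffix of $w_{n+1}=\zeta(w_n)$ to the pair $(\rho_1,\rho_2)$ reduces, after stripping the root letters, to applying a suffix of $w_n$ to the \emph{swapped} pair $(\rho_2,\rho_1)$ (swapped because the tail of $\rho_1$ is $\rho_2$ and vice versa). I would therefore formulate the induction hypothesis symmetrically in the two rays, so that the descent feeds back into itself.

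With this in hand, the induction proper classifies the $|w_{n+1}|+1$ suffixes of $w_{n+1}$ according to the pair of root letters of the point they produce (equivalently, according to the relevant parities). Two points in different classes are automatically distinct, since they already differ at the root of some coordinate. Within a single class, the descent map sends the point $(\rho_1,\rho_2)v$ to $(\rho_2\,\Phi_0(v),\,\rho_1\,\Phi_1(v))$, which I would identify with a point on the inverted orbit of $w_n$ based at $(\rho_2,\rho_1)$; injectivity of the map on each class then lifts distinctness from level $n$ to level $n+1$. The base case $w_0=ad$ is a direct computation: the three points $(\rho_1,\rho_2)$, $(\rho_1,100\rho_1)$ and $(1100\rho_1,\dots)$ are pairwise distinct, and it is already visible here that both coordinates are needed, since the first two agree in coordinate $1$ and are separated only in coordinate $2$.

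The main obstacle is the parity bookkeeping together with the asymmetry of the two coordinates. The block $ab$ has image of odd length ($\zeta(ab)=abadac$, three blocks), so an $ab$ flips the running parity, after which the \emph{wrong} section is read — indeed $\Phi_1(\zeta(ab))=aba$ rather than $ab$ — and the two coordinates genuinely use different section words $\Phi_0\neq\Phi_1$, so the descent is not a diagonal action and does not literally return an inverted orbit of $w_n$. The crux is thus to set up a strong enough inductive statement, tracking the two root letters alongside the tails, so that it survives these parity flips; and the place where distinctness is actually forced is exactly the complementarity $\rho_1=0\rho_2$, $\rho_2=1\rho_1$, which separates — via the root letters — precisely those pairs of points that the tails alone would fail to distinguish.
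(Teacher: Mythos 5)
Your preparatory computations are all correct --- the block actions, the section formula $\Phi_\epsilon$, the complementarity $\rho_1=0\rho_2$, $\rho_2=1\rho_1$, and the base case --- and the engine you propose, descending one level of the tree through sections, is indeed the engine of the paper's proof. But the proposal stops exactly where the proof has to start: you concede that the descent ``does not literally return an inverted orbit of $w_n$'' and that the crux is ``to set up a strong enough inductive statement'', and that statement is never formulated. This is not bookkeeping; it is the mathematical content. Concretely, within a fixed parity class, stripping the root letters from $(\rho_1,\rho_2)\cdot\zeta(u)$, for $u$ a whole-block suffix of $w_n$, yields the pair $\bigl(\rho_2\,a^{-1}u a^{\epsilon},\ \rho_1\,u a^{1-\epsilon}\bigr)$ with $\epsilon\in\{0,1\}$ determined by the class; hence pair-injectivity at level $n+1$ requires that for distinct suffixes $u\neq u'$ of $w_n$ one has $(\rho_2 a)u\neq(\rho_2 a)u'$ or $\rho_1 u\neq\rho_1 u'$. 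This is a statement about the basepoint pair $(\rho_2 a,\rho_1)$, not about $(\rho_2,\rho_1)$, so it is not the statement you are inducting on; iterating the descent perturbs the basepoints again, and suffixes beginning mid-block or mid-$\zeta$-image add single-letter prefix corrections to the words as well. A workable hypothesis must therefore be quantified over a finite family of perturbed basepoints and over words of the form (bounded prefix)$\cdot$(subword of $w_n$)$\cdot$(bounded suffix), and shown to be stable under descent; none of that is in the proposal.

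Second, even with such a hypothesis, the induction cannot close on self-similarity alone: the descent halves lengths but always leaves bounded corrections, so it necessarily bottoms out in a claim about \emph{short} words that must be checked by hand, and your base case ($w_0=ad$, three points) is far weaker than what the descent delivers. This is precisely the ingredient the paper supplies and your plan lacks. The paper reformulates distinctness as ``no non-empty subword of $w_n$ fixes $\rho$'', descends a hypothetical fixing subword with corrections of length at most two on each side, takes the minimal level at which the core subword is non-trivial, and obtains a short word fixing $\rho$; this contradicts a finite enumeration --- a non-trivial alternating word fixing $\rho$ either contains at least six letters from $\{b,c,d\}$ or has the exceptional form $yaxadaxay$, $x\in\{b,c\}$, $y\in\{1,c,d\}$ --- combined with the observation that the exceptional form never occurs inside images of $\zeta$, since there every $ad$ is preceded by $ab$ and followed by $ac$. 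To complete your argument you would need to supply both the descent-stable strengthened hypothesis and such an enumeration seed for short stabilizer elements; at that point you would essentially have reconstructed the paper's proof.
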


The words $w_n$ in the statement of the lemma above were used
in~\cite{bartholdi-erschler:permutational}*{Proposition~4.7} to
estimate the growth of the permutational wreath product of the first
Grigorchuk group.

\begin{proof}
  Write $w_n =g_1\dots g_\ell$ and $\rho=(\rho_1,\rho_2)$. We are to
  show that for all $i<j$ we have $\rho g_i\cdots g_\ell \ne \rho
  g_j\cdots g_\ell$; or, equivalently, that $\rho g_i\cdots g_{j-1}
  \ne\rho$, namely, no subword of $w_n$ fixes $\rho$.

  Let $F$ denote the free product
  $\langle a\mid a^2\rangle*\langle b,c,d\mid b^2,c^2,d^2,bcd\rangle$;
  its elements may be identified with those words in $\Omega$ that
  alternate in `$a$' and `$b,c,d$' letters, and we have a natural
  quotient map $F\to G$. The map $\phi\colon G\to G\wr\sym_2$ lifts to
  a map $\phi\colon F\to F\wr\sym_2$ by the same defining
  formula~\eqref{eq:grigorchuk}. We note
  $\phi(w_n)=\pair{w_{n-1}^a,w_{n-1}}$ as an equality in $F$, and more
  generally if $u$ is a subword of $w_n$ then
  $\phi(u)=\epsilon^s\pair{u_1,u_2}$ with $-1\le|u_1|-|u_2|\le1$ and
  $u_1,u_2$ are subwords of $w_{n-1}$ which overlap on all except
  possibly one letter. Let $\eta\approx2.46$ be the positive root of
  $X^3-X^2-2X-4$; then $|u_1|\approx|u|/\eta$.

  Assume now for contradiction that a non-trivial subword
  $g_i\dots g_{j-1}$ of $w_n$ fixes $\rho$. Set
  $u_1^{(n)}=u_2^{(n)}=g_i\dots g_{j-1}$ and for all $m<n$ define
  subwords $u_1^{(m)},u_2^{(m)}$ of $w_m$ by
  $\phi(u_1^{(m+1)})=\pair{...,u_2^{(m)}}$ and
  $\phi(u_2^{(m+1)})=\pair{u_1^{(m)},...}$. Then
  $\rho_1 u_1^{(m)}=\rho_1$ and $\rho_2 u_2^{(m)}=\rho_2$ for all
  $m\in\{1,\dots,n\}$. Let $m$ be maximal such that $|u_1^{(m)}|\le4$
  and $|u_2^{(m)}|\le4$; in particular $u_1^{(m)}$ and $u_2^{(m)}$ are
  non-trivial. Now the only possibilities for a non-trivial word of
  length $\le4$ to fix $\rho_1$ or $\rho_2$ are
  $u_1^{(m)}\in\{d,aca,acad,daca\}$ and
  $u_2^{(m)}\in\{c,ada,adac,cada\}$, and none of these words have
  sufficient overlap.
\end{proof}

\def\mycolor#1{\if#1bred\else\if#1cgreen\else blue\fi\fi}

\begin{figure}
  \begin{center}
  \begin{tikzpicture}[xscale=0.44,yscale=0.44,decoration={snake,amplitude=.3mm,segment length=1mm}]
  \footnotesize
  \def\leftcoord{-10}
  \def\topcoord{9}
  \foreach\i in {-7,-5,...,8} \path (\i+0.2,\topcoord) edge[decorate] node[above] {$a$} +(0.8,0);
  \foreach\i/\g in {-8/b,-6/b,-4/b,-2/c,0/b,2/b,4/b,6/b,8/b}
    \path (\i,\topcoord) edge[bend left,\mycolor{\g}] node[above] {$\g$} +(1.2,0);
  \foreach\i/\g in {-8/c,-6/d,-4/c,-2/d,0/c,2/d,4/c,6/d,8/c}
    \path (\i,\topcoord) edge[bend right,\mycolor{\g}] node[below] {$\g$} +(1.2,0);
  \foreach\i/\g in {-8/d,-6.8/d,-6/c,-4.8/c,-4/d,-2.8/d,-2/b,-0.8/b,0/d,1.2/d,2/c,3.2/c,4/d,5.2/d,6/c,7.2/c,8/d,9.2/d}
    \path (\i,\topcoord) edge[loop above,\mycolor{\g}] node[above] {$\g$} ();
  \path (9.2,\topcoord) edge[densely dotted] +(0.5,0);
  \path (-8,\topcoord) edge[densely dotted] +(-0.5,0);
  \draw (0,\topcoord) node[below] {$\rho_1$};

  \foreach\j in {-7,-5,...,6} \path (\leftcoord,\j+0.2) edge[decorate] node[left] {$a$} +(0,0.8);
  \foreach\j/\g in {-8/b,-6/b,-4/b,-2/b,0/b,2/b,4/c,6/b}
    \path (\leftcoord,\j) edge[bend left,\mycolor{\g}] node[left] {$\g$} +(0,1.2);
  \foreach\j/\g in {-8/d,-6/c,-4/c,-2/c,0/d,2/c,4/d,6/c}
    \path (\leftcoord,\j) edge[bend right,\mycolor{\g}] node[right] {$\g$} +(0,1.2);
  \foreach\j/\g in {-8/c,-6.8/c,-6/d,-4.8/d,-4/d,-2.8/d,-2/d,-0.8/d,0/c,1.2/c,2/d,3.2/d,4/b,5.2/b,6/d,7.2/d}
    \path (\leftcoord,\j) edge[loop left,\mycolor{\g}] node[left] {$\g$} ();
  \path (\leftcoord,7.2) edge[densely dotted] +(0,0.5);
  \path (\leftcoord,-8) edge[densely dotted] +(0,-0.5);
  \draw (\leftcoord,0) node[right] {$\rho_2$};

  \foreach\i in {-6,-2,...,8} \foreach\j in {-6,-2,...,6}
    \path (\i,\j) edge[decorate] +(-0.8,-0.8);
  \foreach\i in {-4,0,...,8} \foreach\j in {-4,0,...,6}
    \path (\i,\j) edge[decorate] +(-0.8,-0.8);
  \foreach\i in {-4,0,...,8} \foreach\j in {-6,-2,...,6}
    \path (\i-0.8,\j) edge[decorate] +(0.8,-0.8);
  \foreach\i in {-6,-2,...,7} \foreach\j in {-4,0,...,5}
    \path (\i-0.8,\j) edge[decorate] +(0.8,-0.8);

  \foreach\i/\j/\a/\b/\c in {-8/-8/c/d/b,-8/0/c/d/b,-8/4/b/d/c,
    -6/-6/d/c/b,-6/-2/d/c/b,-6/2/d/c/b,-6/6/d/c/b,
    -4/-8/c/d/b,-4/0/c/d/b,-4/4/b/d/c,
    -2/-6/d/b/c,-2/-2/d/b/c,-2/2/d/b/c,-2/6/d/b/c,
    0/-8/c/d/b,0/0/c/d/b,0/4/b/d/c,
    2/-6/d/c/b,2/-2/d/c/b,2/2/d/c/b,2/6/d/c/b,
    4/-8/c/d/b,4/0/c/d/b,4/4/b/d/c,
    6/-6/d/c/b,6/-2/d/c/b,6/2/d/c/b,6/6/d/c/b,
    8/-8/c/d/b,8/0/c/d/b,8/4/b/d/c}
    \path (\i,\j) edge[\mycolor{\a}] (\i+1.2,\j)
          (\i,\j+1.2) edge[\mycolor{\a}] (\i+1.2,\j+1.2)
          (\i,\j) edge[\mycolor{\b}] (\i,\j+1.2)
          (\i+1.2,\j) edge[\mycolor{\b}] (\i+1.2,\j+1.2)
          (\i,\j) edge[\mycolor{\c}] (\i+1.2,\j+1.2)
          (\i+1.2,\j) edge[\mycolor{\c}] (\i,\j+1.2);
  \foreach\i/\j/\a/\b/\c in {-8/-4/d/b/c,
    -4/-4/d/b/c,
    0/-4/d/b/c,
    4/-4/d/b/c,
    8/-4/d/b/c}
    \path (\i,\j) edge[loop above,\mycolor{\a}] ()
          (\i,\j+1.2) edge[loop below,\mycolor{\a}] ()
          (\i+1.2,\j) edge[loop above,\mycolor{\a}] ()
          (\i+1.2,\j+1.2) edge[loop below,\mycolor{\a}] ()
          (\i,\j) edge[bend left=20,\mycolor{\b}] (\i+1.2,\j+1.2)
          (\i,\j) edge[bend right=20,\mycolor{\c}] (\i+1.2,\j+1.2)
          (\i+1.2,\j) edge[bend left=20,\mycolor{\b}] (\i,\j+1.2)
          (\i+1.2,\j) edge[bend right=20,\mycolor{\c}] (\i,\j+1.2);
  \draw(0,0) node[below] {$\rho$};
  \end{tikzpicture}
  \end{center}
  \caption{The Schreier graph of $\Grig$ on $\Grig/\stab(\rho_1)\times\Grig/\stab(\rho_2)$. Edges are indicated by colours: black for $a$, red/green/blue for $b/c/d$.}\label{fig:schreier}
\end{figure}
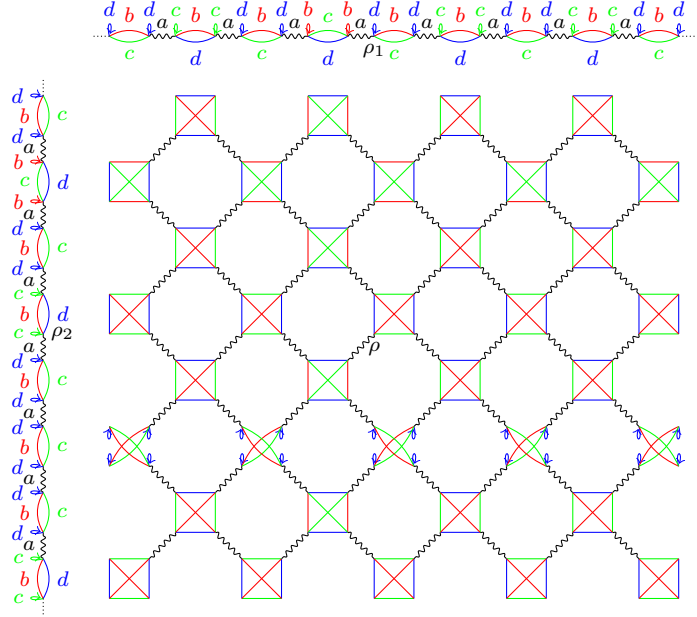

Let us tentatively introduce the following notion. Consider a group
$G$ acting transitively on a set $X$, and fix $\rho \in X$. Say that
the growth of inverted orbits of $G$ on $(X,\rho)$ is {\it strongly
  linear}, if there exists a finite generating set $S$ of $G$ such that
for each $n \in\N$ there exists a word $w_n$ of length $n$ over
elements of $S$ such that the inverted orbit of $w_n$ has exactly
$n+1$ points (recall that this is the maximal value it may assume).

Proposition~\ref{prop:directproof} shows that $(G,X)$ has strongly
linear growth.  Observe the following consequence of strongly linear
growth of inverted orbits:
\begin{lemma}\label{lem:trees}
  If $G$ has strongly linear inverted orbit growth on $X$ and $A$ is
  non-trivial, then some Cayley graph of $A\wr_X G$ contains an
  imbedded copy of the infinite binary rooted tree.
\end{lemma}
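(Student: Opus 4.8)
The plan is to produce, for each $n$, a genuine graph embedding of the finite binary tree of depth $n$ into a fixed locally finite Cayley graph of $W$, with the root sent to the identity, and then to assemble these into an embedding of the infinite tree by a compactness argument. First I would reduce to $A=\langle a\rangle$ cyclic: fix $a\in A\setminus\{1\}$; since $\langle a\rangle\wr_X G$ is a subgroup of $W$ and any Cayley graph of it, taken with respect to a generating set contained in one of $W$, is a subgraph of a Cayley graph of $W$, it suffices to embed the tree into a Cayley graph of $W_0:=\langle a\rangle\wr_X G$. Let $S$ be the finite generating set of $G$ supplied by strong linearity, and put
\[
S'=S\cup S^{-1}\cup\{a\}\cup\{ta\mid t\in S\cup S^{-1}\},
\]
a finite generating set of $W_0$ (here I use transitivity of the action); write $\Gamma_0=\mathrm{Cay}(W_0,S')$, a connected locally finite graph.

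For each $n$ I would take the word $w_n=g_1\cdots g_n$ of strong linearity, whose inverted orbit $p_0=\rho,\ p_1=\rho g_n,\ \dots,\ p_n=\rho g_1\cdots g_n$ consists of $n+1$ \emph{distinct} points. Setting $t_j=g_{n-j+1}^{-1}$ and $h_j=t_1\cdots t_j=(g_{n-j+1}\cdots g_n)^{-1}$, the multiplication rule for $W$ (the ``A'' and ``G'' edges of Section~\ref{sec:def}) shows that, after reading the prefix $t_1\cdots t_j$, an inserted letter of $A$ modifies the configuration exactly at $\rho h_j^{-1}=p_j$. I therefore define, for a binary string $\epsilon_1\cdots\epsilon_k$ with $k\le n$,
\[
\Phi_n(\epsilon_1\cdots\epsilon_k)=t_1 a^{\epsilon_1}\,t_2 a^{\epsilon_2}\cdots t_k a^{\epsilon_k}\in W_0,
\]
the empty string mapping to the identity. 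Each tree edge from $\epsilon_1\cdots\epsilon_k$ to $\epsilon_1\cdots\epsilon_k\epsilon_{k+1}$ is then right multiplication by the single generator $t_{k+1}a^{\epsilon_{k+1}}\in S'$, so adjacent tree vertices map to adjacent vertices of $\Gamma_0$. For injectivity, the $G$-component of $\Phi_n(\epsilon_1\cdots\epsilon_k)$ is $h_k$; since the $p_j=\rho h_j^{-1}$ are distinct the $h_j$ are distinct, so the depth $k$ is recovered from the image, and since the inserted letters sit at the distinct positions $p_1,\dots,p_k$, the configuration takes value $a^{\epsilon_j}$ at $p_j$, so (as $a\ne 1$) the string $\epsilon_1\cdots\epsilon_k$ is recovered as well. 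Thus $\Phi_n$ embeds the depth-$n$ binary tree into $\Gamma_0$, rooted at $1$.

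To obtain the \emph{infinite} tree I would pass to a limit. The difficulty here, and the real content of the argument, is that the words $w_n$ need not be nested — in the explicit substitution of Proposition~\ref{prop:directproof} they are not — so the maps $\Phi_n$ are genuinely different and do not directly glue. I would overcome this by compactness: any depth-$m$ graph embedding of the binary tree sending the root to $1$ must send each depth-$d$ vertex into the ball $B_{\Gamma_0}(1,d)$, and these balls are finite, so there are only finitely many such embeddings of each fixed depth, each restricting to one of smaller depth. Since embeddings of every depth exist, König's lemma applied to this finitely branching tree of restrictions yields a coherent chain whose union is a graph embedding of the infinite binary rooted tree into $\Gamma_0$, and hence into a Cayley graph of $W$.

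In summary, the finite step is essentially the observation that distinct inverted-orbit points are independent lamp positions and that reading the reversed word $t_1\cdots t_n$ visits them one after another; I expect the compactness (König) step promoting finite trees of all depths to a single infinite tree to be the main obstacle, precisely because strong linearity only furnishes an unrelated word for each length.
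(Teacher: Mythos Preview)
Your proof is correct and follows essentially the same approach as the paper's: build, for each $n$, a finite binary tree by threading one of two $A$-values between the successive letters of the strongly-linear word $w_n$, use distinctness of the inverted-orbit points to get injectivity, and then pass to the infinite tree by a compactness/K\"onig argument. The only cosmetic differences are that you reverse and invert $w_n$ so that tree edges become \emph{right} multiplications by a single generator (the paper writes $a_{i_m}g_m\cdots a_{i_n}g_n$ and connects by left multiplication), and you make an unnecessary preliminary reduction to cyclic $A$; neither changes the substance.
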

\begin{proof}
  Let $a_0\neq a_1$ be two elements of $A$. Let $S'$ be a generating
  set of $G$ for which the inverted orbits grow strongly linearly. Let
  $S$ be a generating set of $W:=A\wr_X G$ containing
  $\{a_0,a_1\}\times S'$. For $n\in\N$, let $w_n=g_1\cdots g_n$ be a
  word of length $n$ visiting $n$ points in $X$, and consider all
  words of the form $a_{i_m}g_m\cdots a_{i_n}g_n$ for all
  $m\in\{1,\cdots,n+1\}$ and all $i_m,\dots,i_n\in\{0,1\}$. We claim
  that these are the vertices of the height-$n$ binary rooted tree in
  the Cayley graph of $W$.

  First, these elements are all distinct: consider $a_{i_m}\cdots g_n$
  and $a_{i'_{m'}}\cdots g_n$. If $m\neq m'$ then their projections to
  $G$ are distinct; while if $m=m'$ then, because the inverted walk
  $g_m\cdots g_n$ visits $n-m+1$ distinct positions, the elements are
  distinct as soon as $i_j\neq i'_j$ for some $j\in\{m,\dots,n\}$.

  Because all $a_{i_m}g_m$ belong to $S$, there is an edge in the
  Cayley graph from $a_{i_m}g_m\cdots a_{i_n}g_n$ to
  $a_{i_{m+1}}g_{m+1}\cdots a_{i_n}g_n$; these edges form a binary
  tree, rooted at $1$.

  Since $n$ was arbitrary, we obtain for all $n$ a binary tree of
  height $n$ and rooted at $1$. A classical diagonal argument then
  extracts from this sequence an infinite binary rooted tree.
\end{proof}

\begin{corollary}\label{cor:wexp}
  The wreath product $W=A\wr_{X_1\times X_2}\Grig$ has exponential
  word growth, for $X_1$ the orbit of $\rho_1 = (12)^\infty$ and $X_2$
  is the orbit of $\rho_2= (21)^\infty$. Moreover, some Cayley graph
  of $W$ contains an infinite binary rooted tree.
\end{corollary}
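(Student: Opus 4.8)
The plan is to assemble Proposition~\ref{prop:directproof} and Lemma~\ref{lem:trees}; the only genuine work left is to upgrade the sparse family of words $w_n$ to words of \emph{every} length, after which both assertions of the corollary follow formally.

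First I would check that $(\Grig,X_1\times X_2)$ has strongly linear inverted orbit growth. Proposition~\ref{prop:directproof} provides, for each $n$, a word $w_n=\zeta^n(ad)$ of length $|w_n|\sim(2/\eta)^n$ whose inverted orbit has the maximal number $|w_n|+1$ of points. These lengths form a geometric (hence sparse) sequence, whereas the definition of strong linearity, and so the hypothesis of Lemma~\ref{lem:trees}, requires a word of \emph{each} length $\ell$. The key observation is that the proof of Proposition~\ref{prop:directproof} actually establishes the stronger fact that \emph{no nontrivial subword} of $w_n$ fixes $\rho=(\rho_1,\rho_2)$. Consequently every prefix $g_1\cdots g_\ell$ of $w_n$ again has $\ell+1$ distinct points on its inverted orbit, since distinctness of these points is exactly the assertion that no subword fixes $\rho$. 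As $|w_n|\to\infty$, for each $\ell$ I may pick $n$ with $|w_n|\ge\ell$ and take the length-$\ell$ prefix of $w_n$, which realizes a word of length $\ell$ with maximal inverted orbit. This is the strongly linear growth asserted just after Proposition~\ref{prop:directproof}.

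Next I would invoke Lemma~\ref{lem:trees} with $A$ non-trivial: some Cayley graph of $W=A\wr_{X_1\times X_2}\Grig$, taken with respect to a finite generating set $S\supseteq\{a_0,a_1\}\times S'$, contains an imbedded copy of the infinite binary rooted tree. This yields at once the ``moreover'' clause of the corollary. To extract exponential word growth I would then control the word lengths of the tree vertices. The tree of Lemma~\ref{lem:trees} is rooted at the identity $1\in W$, and each tree edge is a Cayley edge joining $a_{i_m}g_m\cdots a_{i_n}g_n$ to $a_{i_{m+1}}g_{m+1}\cdots a_{i_n}g_n$; since each syllable $a_{i_j}g_j$ lies in $S$, a tree vertex of depth $d$ is a product of $d$ generators and therefore lies in the ball $B_W(d)$. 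The height-$n$ tree thus places at least $2^{n+1}-1$ \emph{distinct} elements (distinctness being part of Lemma~\ref{lem:trees}) inside $B_W(n)$, whence the growth function of $W$ satisfies $v(n)\ge 2^n$ and $W$ has exponential word growth.

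The main obstacle is the first step: the words furnished by Proposition~\ref{prop:directproof} realize only a geometric sequence of lengths, so one must exploit the finer statement---implicit in that proposition's proof---that no subword of $w_n$ fixes the basepoint, in order to fill in all intermediate lengths by passing to prefixes. Everything after that is routine: Lemma~\ref{lem:trees} is applied verbatim, and the passage from the imbedded tree to the growth bound rests only on the elementary inequality relating tree depth to word length in the generating set $S$.
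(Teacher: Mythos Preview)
Your proof is correct and follows the same route the paper intends: Proposition~\ref{prop:directproof} plus Lemma~\ref{lem:trees}. The paper simply asserts, just before the corollary, that ``Proposition~\ref{prop:directproof} shows that $(G,X)$ has strongly linear growth'' and leaves the corollary unproved; you have made explicit the one point the paper glosses over, namely that the substitution words $\zeta^n(ad)$ only realise a geometric sequence of lengths, and that one must pass to prefixes (using the ``no subword fixes $\rho$'' statement actually established inside the proof of Proposition~\ref{prop:directproof}) to get a witness of every length. Your deduction of exponential growth from the embedded tree is also what the paper has in mind but does not spell out.
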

It also follows from~\ref{thm:expwordgrowth} that $W$ has exponential
growth; indeed, $\Grig$ and $\Grig\times\Grig$ are commensurable, so
we are, up to finite index, in the situation of a product of groups
$G_1\times G_2$ acting on $X_1\times X_2$. We have elected to give a
direct proof that $W$ has exponential growth, because we also deduce
along the way that $W$ contains trees in its Cayley graph.

A classical question of Rosenblatt~\cite{rosenblatt:mixing} asks
whether every group of exponential growth admits a Lipschitz imbedding
of the infinite binary rooted tree.  A result of Benjamini and
Schramm~\cite{benjamini-schramm:trees} implies that every non-amenable
graph contains the image a regular tree by a Lipschitz imbedding; so
it is sufficient, to answer positively Rosenblatt's question, to
exhibit a non-amenable subgraph. Rosenblatt's question is answered
positively for virtually soluble groups (the group contains a free
subsemigroup) and non-amenable groups (since their Cayley graph is
non-amenable), but is open in general. The group $W$ we construct in
this article also contains Lipschitzly imbedded infinite binary rooted
trees (by Lemma~\ref{lem:trees}), though for a different reason than
those mentioned above.

\begin{bibdiv}
\begin{biblist}
\font\cyreight=wncyr8
\bibselect{../math}
\end{biblist}
\end{bibdiv}
\end{document}